\newif\iftikziii
\theoremstyle:=definition,remark,plain\do{%
        \expandafter\g@addto@macro\csname th@\theoremstyle\endcsname{%
        \addtolength\thm@preskip\parskip
        }%
    }
	\renewcommand{\@makecaption}[2]{%			      %Redefines captions
	\vspace{\abovecaptionskip}
	\sbox{\@tempboxa}{\textbf{#1}. #2}%
	\ifdim \wd\@tempboxa >\linewidth
		\textbf{#1}. #2\par
	\else
		\global\@minipagefalse
		\makebox[\linewidth]{\hfil\usebox{\@tempboxa}\hfil}%
	\fi
	\vspace{\belowcaptionskip}}
	\newcommand{\LineaCabecera}{\makebox[0pt][l]{\rule[-1.5ex]{\textwidth}{0.4pt}}} 
	\newcommand{\ps@memoire}{%
	\renewcommand{\@oddhead}{\LineaCabecera{\rightmark} \hfill\thepage}
	\renewcommand{\@evenhead}{}
	\renewcommand{\@oddfoot}{}
	\renewcommand{\@evenfoot}{}
	\renewcommand{\sectionmark}[1]{\markright{\thesection.\ ##1}}}
\numberwithin{equation}{section}
\numberwithin{figure}{section}
\newcommand{\bgm}{\color{black}{}}
\newcommand{\egm}{\normalcolor{}}
\newcommand{\centre}[1]{\begin{array}{c} #1 \end{array}}
\newtheoremstyle{mytheorem}{15pt}{15pt}{\itshape}{}{\bfseries}{.}{ }{} % THEOREM ENVIRONMENTS
\theoremstyle{mytheorem}
\newtheorem{theorem}{Theorem}[section]
\newtheorem*{theorem*}{Theorem}
\newtheorem{lemma}[theorem]{Lemma}
\newtheorem{proposition}[theorem]{Proposition}
\newtheorem{corollary}[theorem]{Corollary}
\theoremstyle{remark} % REMARK ENVIRONMENTS
\newtheorem{remark}[theorem]{Remark}
\theoremstyle{definition} % DEFINITION ENVIRONMENTS
\newcommand{\form}[2]{\langle #1 , #2 \rangle}
\newcommand{\formempty}{\langle \cdot , \cdot \rangle}
\newcommand{\SSform}[2]{\langle #1 , #2 \rangle_ {\operatorname{s}}}
\newcommand\SSempty{\langle \cdot , \cdot \rangle_{\operatorname{s}}}
\newcommand\pai{\rho}
\newcommand{\by}[1]{\stackrel{\eqref{#1}}{=}}
\newcommand\dep{{\partial}}
\newcommand\up{\vspace{-0.8cm}}
\newcommand\Mag{{{\mathbf{\mathsf{Mag}}}}}
\newcommand\Cob{{{\mathbf{\mathsf{Cob}}}}}
\newcommand\Lagr{{{\mathbf{\mathsf{Lagr}}}}}
\newcommand\pLagr{{{\mathbf{\mathsf{pLagr}}}}}
\newcommand\grMod{{{\mathbf{\mathsf{grMod}}_R}}}
\newcommand\Uni{{{\mathbf{\mathsf{U}}}_{{R}}}}
\newcommand\pUni{{{\mathbf{\mathsf{pU}}}_{{R}}}}
\newcommand\UniQ{{{\mathbf{\mathsf{U}}}^{\operatorname{r}}_{{Q}}}}
\newcommand\pUniQ{{{\mathbf{\mathsf{pU}}}^{\operatorname{r}}_{{Q}}}}
\newcommand\mcg{{{\mathsf{MCG}}}}
\newcommand\HCob{{{\mathsf{C}}}}
\newcommand\Alex{{{\mathsf{A}}}}
\newcommand\vol{{{\omega}}}
\newcommand\Pl{{{\mathsf{Pl}}}}
\DeclareMathOperator{\cl}{\operatorname{cl}}
\DeclareMathOperator{\ord}{\operatorname{ord}}
\DeclareMathOperator{\Tors}{\operatorname{Tors}}
\DeclareMathOperator{\Aut}{\operatorname{Aut}}
\DeclareMathOperator{\coker}{\operatorname{coker}}
\DeclareMathOperator{\Ann}{\operatorname{Ann}}
\DeclareMathOperator{\Hom}{\operatorname{Hom}}
\DeclareMathOperator{\Id}{\operatorname{Id}}
\newcommand\cA{{\mathcal A}}
\newcommand\RR{{\mathbb R}}
\newcommand\ZZ{{\mathbb Z}}
\newcommand\gL{{\Lambda}}
\newcommand\gG{{\Gamma}}
\newcommand\ga{{\alpha}}
\newcommand\gb{{\beta}}
\newcommand\ggm{{\gamma}}
\newcommand\gd{{\delta}}
\newcommand\gl{{\lambda}}
\newcommand\gvf{{\varphi}}
\newcommand\bfc{{\mathbf{c}}}
\mathchardef\ordinarycolon\mathcode`\:
\title{A functorial extension of the Magnus representation\\ to the category of three-dimensional cobordisms} 
\author{Vincent Florens \and Gw\'ena\"el Massuyeau \and Juan Serrano de Rodrigo}
\newcommand{\Addresses}{{
		
		\bigskip
		
		\footnotesize
		
		\begin{tabular}{l}
		\textsc{Vincent Florens} \\ 
		\textsc{LMA, Universit\'e de Pau \& CNRS} \\ 
		\textsc{Avenue de l'Universit\'e} \\
		\textsc{64000 Pau, France} \\
		\texttt{vincent.florens@univ\hbox{-}pau.fr}
		\end{tabular}
		
		\medskip
		
		\begin{tabular}{lcl}
		\textsc{Gw\'ena\"el Massuyeau} &&  \\ 
		\textsc{IRMA, Universit\'e de Strasbourg \& CNRS} & \quad \emph{and} \quad & \textsc{IMB,  Universit\'e de Bourgogne \& CNRS }  \\
		\textsc{7 rue Ren\'e Descartes} && \textsc{9 avenue Alain Savary}  \\
		\textsc{67084 Strasbourg, France} && \textsc{21000 Dijon, France}  \\
		\texttt{massuyeau@math.unistra.fr} &&
		\end{tabular}
				
		\medskip
		
		\begin{tabular}{l}
		\textsc{Juan Serrano de Rodrigo} \\
		\textsc{Dpto. de Matem\'aticas, Universidad de Zaragoza} \\
		\textsc{Calle Pedro Cerbuna 12} \\
		\textsc{50009 Zaragoza, Spain} \\
		\texttt{serrano.de.rodrigo@gmail.com}
		\end{tabular}
	}}
\begin{document}
	
% -----------------------------
% --- TITLE OF THE DOCUMENT ---
% -----------------------------

\maketitle

\begin{abstract} 
Let $R$ be an integral domain and $G$ be a subgroup of its group of units. We consider the  category $\Cob_G$ of $3$-dimensional cobordisms between oriented surfaces with connected boundary, equipped with a representation of their fundamental group  in $G$. Under some mild conditions on $R$, we construct a  monoidal functor from  $\Cob_G$ to the category $\pLagr_R$ consisting of ``pointed Lagrangian relations'' between skew-Hermitian $R$-modules. We call it the ``Magnus functor'' since it contains the Magnus representation of mapping class groups as a special case. Our construction is inspired from the work of Cimasoni and Turaev on the extension of the Burau representation of braid groups to the category of tangles. It can also be regarded as a $G$-equivariant version of a TQFT-like functor that has been described by Donaldson. The study and computation of the Magnus functor  is carried out using classical techniques of low-dimensional topology. When $G$ is a free  abelian group and $R=\ZZ[G]$ is the group ring of $G$, we relate the Magnus functor to the ``Alexander functor'' (which has been introduced in a prior work using Alexander-type invariants), and we deduce a factorization formula for the latter. 
\end{abstract}
	
% *************************************************************

% -------------------------
% --- TABLE OF CONTENTS ---
% -------------------------

%	\setcounter{tocdepth}{1}
%	{\footnotesize \tableofcontents} 

% -----------------------
% --- LIST OF FIGURES ---
% ----------------------- 

%\addcontentsline{toc}{section}{\listfigurename}
%\listoffigures
%\clearpage

% *************************************************************
 
 \bigskip \bigskip 

\section{Introduction} \label{Introduction}

Let $\Sigma$ be a compact connected oriented surface with $\partial \Sigma \neq \varnothing$, and let $\pi:= \pi_1(\Sigma,\star)$ be its fundamental group based at a point $\star \in \partial \Sigma$.
The \emph{mapping class group}  $\mcg(\Sigma)$ consists of the isotopy classes of (orientation-preserving) self-homeomorphisms of $\Sigma$ fixing the boundary pointwise. 
``Magnus representations''  usually refer to those ``representations'' of subgroups of $\mcg(\Sigma)$ 
that are defined by assigning to an $f\in \mcg(\Sigma)$ the matrix with entries in $\ZZ[\pi]$ consisting of Fox's free derivatives  of $f_*:\pi \to \pi$ with respect to a fixed basis of $\pi$. Thus they have a group-theoretical definition, which goes through the automorphism group of $\pi$.

Birman coined the  terminology ``Magnus representations''  in the third chapter of her book \cite{Bi74}, where
it is observed   that these kinds of ``representations''  arise from  matrix representations of  free groups dating back to Magnus \cite{Ma39}.
Birman gave there an algebraic exposition and survey of these ``representations'', explaining for instance how far they are from being  group homomorphisms, or analyzing their kernels and images. One of her motivations  was to give a unified  treatment of the \emph{Burau representation} of the braid group and the \emph{Gassner representation} of the pure braid group. These correspond to the case where  the surface $\Sigma$ is a disk with marked points or holes, and are defined from the ``Magnus representations'' by reducing the coefficients in $\ZZ[\pi]$ to some appropriate commutative~rings.

The Gassner representation of  the pure braid group was later extended to string links (also called ``pure tangles'') by Le Dimet \cite{LD92}.
Kirk, Livingston and Wang \cite{KiLiWa01} gave a topological interpretation of this extension and a simple proof of its invariance under concordance. Their approach is based on a natural action of the monoid of string links on the twisted homology of a  punctured disk, and relies on the topological interpretation of Fox's free derivatives in terms of universal covers. 
\bgm In these works, the study of the  Burau and Gassner representations is partly motivated by their tight connections with the Alexander polynomial of links. \egm
More recently, Cimasoni and Turaev \cite{CiTu05I,CiTu05II} extended \bgm these \egm representations to arbitrary tangles. Their invariant is defined as a functor from the category of (colored) tangles to the category of ``Lagrangian relations'' between skew-Hermitian modules.  
 
In the case of a surface $\Sigma$ of positive genus, Magnus representations have been used and studied by Morita \cite{Mo93}, Suzuki \cite{Su03} and Perron \cite{Pe06} among others.
In this case too, there is a group-theoretical definition in terms of Fox's free derivatives as well as a topological definition using twisted homology  (see \cite{Su05}).
Furthermore, the Magnus representation is extended in \cite{Sa08} to the monoid of homology cobordisms \bgm (also called ``homology cylinders''), which 
are  higher-genus versions  of string links. \egm
\bgm In these works, the study of the Magnus representation is driven by its relations 
to the Alexander polynomial and Reidemeister torsions of closed $3$-manifolds,
and by its role in the study of the ``Johnson homomorphisms''. \egm
We refer to Sakasai \cite{Sa12} for an overview of these topics.
In this paper, inspired by the above-mentioned work of Cimasoni and Turaev,  we construct 
 a functorial extension of the  Magnus representations  	%	for surfaces of positive genus 
  to a certain category of $3$-dimensional cobordisms.

 To be more specific, our framework is  the category $\Cob$ of $3$-dimensional cobordisms
 introduced by Crane and Yetter \cite{CrYe99}. The objects of this category are  compact connected oriented surfaces $F_g$ of arbitrary genus $g \geq 0$, with exactly one boundary component,
 and the morphisms  are $3$-dimensional cobordisms with corners between such surfaces. 
\bgm The category  $\Cob$  originates  from the study of $3$-dimensional Topological Quantum Field Theories (TQFT's).  
 %	The  boundary-connected sums of  surfaces and $3$-manifolds induce a   monoidal structure on $\Cob$.
We consider here the refinement $\Cob_G$ of this  category
 where the surfaces and $3$-manifolds are equipped with a representation of their fundamental group in a \emph{fixed} group $G$. \egm

\bgm Specifically, we assume  that \egm $G$ is a subgroup of the group of units of an integral domain~$R$, and that $R$ has an involutive ring endomorphism which extends the  map of~$G$ given by $g \mapsto g^{-1}$. 
Recall from \cite{CiTu05I} that the category $\Lagr_R$ of \emph{Lagrangian relations} is defined as follows: 
objects are finitely generated $R$-modules $H$  equipped   with a non-degenerate skew-Hermitian form $\rho$, and morphisms $(H_1,\rho_1) \to (H_2,\rho_2)$ are Lagrangian submodules of $\big(H_1\oplus H_2,(-\rho_1)\oplus \rho_2\big)$.   
%    By requiring that any skew-Hermitian $R$-module should come with a distinguished element of a certain kind, 
%    we obtain the category $\mathsf{pLagr}_R$ of \emph{pointed Lagrangian relations}: 
%    this category has a monoidal structure given by a skew version of the direct sum (see Proposition~\ref{prop:mono}). 
\bgm The following construction can be regarded as a TQFT-like extension of the Magnus representations.\\

\noindent
\textbf{Theorem I.}
\emph{There exists  a  \bgm functor \egm  
$
\Mag:= \Mag_{R,G}~ \colon \Cob_G \to \bgm \Lagr_R \egm
$ 
which is defined as follows. At the level of objects, $\Mag$  assigns  to any  pair $\big(F_g, \varphi~ \colon \pi_1(F_g,\star) \rightarrow G\big)$  
the \bgm skew-Hermitian $R$-module \egm 
$$
\bgm \big(H_1^\varphi(F_g,\star),\, \langle  \cdot , \cdot  \rangle_s \big) \egm
$$
where $\star \in \partial F_g$ \bgm and \egm  $ \langle \cdot ,\cdot  \rangle_s~ \colon  H_1^\varphi(F_g,\star) \times H_1^\varphi(F_g,\star) \to R$
 is \bgm a version of the equivariant   intersection form with  coefficients in $R$ twisted by $\varphi$. \egm  
 %	and $\nu \in H_1^\varphi(F_g,\star)$ is defined by the boundary curve. 
At the level of morphisms, $\Mag$ assigns to any cobordism $(M,\varphi)$ between $(F_{g_-},\varphi_-)$ and $(F_{g_+},\varphi_+)$  the (closure of) the kernel of the $R$-linear map 
$$
(-m_-)\oplus m_+~ \colon  H_1^{\varphi_-}(F_{g_-},\star) \oplus H_1^{\varphi_+}(F_{g_+},\star) \longrightarrow H_1^\varphi(M,\star)
$$
induced by the inclusions $m_\pm~ \colon  F_{g_\pm} \to \partial M \subset M$.}\\

\noindent
One  difficulty to adapt the work of Cimasoni and Turaev from tangles to cobordisms lies in the construction of a skew-Hermitian form on  $H_1^\varphi(F_g,\star)$: 
 here the form $ \langle  \cdot , \cdot  \rangle_s$ is derived from the homotopy intersection pairing that Turaev introduced in \cite{Tu78}. 
The main advantage of considering the twisted homology of $F_g$ relative to a base point $\star$ (instead of the absolute twisted homology as in \cite{CiTu05I}) is that the module associated to the surface $F_{g}$ is always free of rank $2g$, regardless of the way coefficients are twisted by $\varphi$. 
\bgm Another difference with their work is that we will deal with monoidality.
Indeed, the  boundary-connected sums of  surfaces and $3$-manifolds induce a  monoidal structure on  the category $\Cob_G$.
 We  introduce the category $\mathsf{pLagr}_R$ of \emph{pointed Lagrangian relations}
 where each  skew-Hermitian $R$-module is endowed with a distinguished element of its rationalization.
 This refinement of the category $\mathsf{Lagr}_R$ has a monoidal structure defined  by a skew version of the direct sum,
 and the above functor $\Mag$ can be refined to preserve these monoidal structures. 
(See Theorem \ref{MainTheorem} and Proposition \ref{prop:monoidality} for precise statements.) \egm 
%	The role of the distinguished element $\nu/2$ defined by the boundary curve of $F_g$ is to afford the monoidality. 
\bgm Apart for  the aforesaid differences, the proof of Theorem I  follows essentially \egm the same lines as the construction of Cimasoni and Turaev.

\bgm There is  a  relation  of  $4$-dimensional homology cobordism between $3$-dimensional cobordisms. 
 We denote this  by~$\sim_H$ and call it the relation of \emph{homology concordance}, since it is an analogue of the concordance relation for tangles. \egm
As one may expect, the Magnus functor descends to the quotient category $\Cob_G/\! \sim_H$ (see Proposition \ref{prop:h_cobordism_rel}). For instance, when $G=\{ 1 \}$ and $R= \mathbb{R}$, the functor $\Mag$ 
provides a   functor from the quotient category  $\Cob/\!\sim_H$ 
to the category of Lagrangian relations between symplectic $\mathbb{R}$-modules. This is essentially the TQFT-like functor introduced by Donaldson \cite{Do99} as a tool to re-prove the surgery formulas for  the Casson invariant and $3$-dimensional Seiberg--Witten invariants. Under some homological assumptions on the cobordisms, 
this ``TQFT'' is  also equivalent to a construction of Frohman and Nicas \cite{FN91} \bgm which involves  moduli spaces of flat $U(1)$-connections. \egm

\bgm In the recent literature, the equivalence  relation $\sim_H$ 
has been mainly studied on the monoid $\mathsf{C}(F_g)$ of \emph{homology cobordisms}  over the surface $F_g$:
see for instance \cite{GL05,Mo08,CFK11,CST16}.
We consider here the submonoid $\mathsf{C}^\varphi(F_g)$  of $\mathsf{C}(F_g)$ 
consisting of those homology cobordisms \egm that are compatible with a fixed representation $\varphi~ \colon  \pi_1(F_g,\star) \to G$. 
Since it  \bgm can be viewed as \egm a submonoid of the monoid of endomorphisms of the object $(F_g,\varphi)$ in $\Cob_G$, 
we   consider  the restriction of $\Mag$ to $\mathsf{C}^\varphi(F_g)$ and we find that it is equivalent to the  Magnus representation 
$$
r^\varphi ~ \colon  \mathsf{C}^\varphi(F_g) \bgm /\! \sim_H \egm  \rightarrow \Aut_{Q} \big(Q \otimes_R H_1^{\varphi} (F_g,\star)\big)
$$ 
where $Q := Q(R)$ is the field of fractions of $R$ (see Proposition \ref{prop:magnus}). Here, following  \cite{CiTu05I} again, we  regard  unitary isomorphisms between skew-Hermitian  $R$-modules as morphisms in the category $\Lagr_R$   \bgm by considering their set-theoretical graphs.

In particular, we obtain that the representations of $\mcg(F_g)$ arising from the functor $\Mag$ coincide with the usual Magnus representations,
so that they can be computed very easily using Fox's free differential calculus. 
More generally, we explain in Section \ref{subsec:Heegaard} how to compute $\Mag$ on an arbitrary cobordism which \bgm is presented  by a Heegaard splitting. 
 For instance, these techniques may be applied to compute $\Mag$  on the generators of the monoidal category $\Cob_G$ 
 that   arise from the generating system of $\Cob$  given  in \cite{Ke03}. 
According to \cite{CrYe99}, the monoidal category $\Cob$ is braided and the object $F_1$    therein  is a ``braided Hopf algebra'':
since the functor $\Mag$ is monoidal, this rich algebraic structure reflects in the monoidal category~$\pLagr_R$. \egm

Finally, we apply the Magnus functor $\Mag$ to the study of the  \emph{Alexander functor} $\Alex$ introduced in \cite{FlMa14},
\bgm which provides a kind of TQFT for the Alexander polynomial of knots in homology 3-spheres. \egm
This functor is defined on  $\mathsf{Cob}_G$ too, but takes values in the category $\grMod_{,\pm G}$ of  graded $R$-modules and $R$-linear maps which are only defined up to multiplication by an element of $\pm G \subset R$, and which may shift the degree. It can be constructed using either the Alexander function introduced by Lescop  \cite{Lescop} for $3$-manifolds with boundary, or using the theory of Reidemeister torsions. The functor $\mathsf{A}$  assigns to any pair $(F_g,\varphi)$ the graded $R$-module $\Lambda H_1^\varphi(F_g,\star)$, and it assigns to any cobordism $(M,\varphi)$ between $(F_{g_-},\varphi_-)$ and $(F_{g_+},\varphi_+)$ an $R$-linear map of degree $g_+-g_-$. Our second  main  result is the following  (see Theorem \ref{th:Alex_Magnus} for a precise statement).\\

\noindent   
\textbf{Theorem II.}
\emph{For any cobordism $(M,\varphi)$ from $(F_{g_-},\varphi_-)$ to $(F_{g_+},\varphi_+)$, the $R$-module $\Mag(M,\varphi)$ is tantamount to an $R$-linear map of degree $g_+-g_-$
$$
\Mag_W (M,\varphi)~ \colon  \Lambda H_1^{\varphi_-}(F_{g_-},\star) \longrightarrow \Lambda H_1^{\varphi_+}(F_{g_+},\star)
$$
which is defined up to multiplication by an element of $\pm G$ and satisfies
$$
\Alex(M,\varphi) = \Delta(M,W) \cdot \Mag_W (M,\varphi),
$$ 
where $\Delta(M,W)\in R/\!\pm G$ is a kind of ``relative'' Alexander polynomial for the $3$-manifold $M$.}\\
  
\noindent   
The above factorization formula for $ \Alex(M,\varphi)$ depends on the choice of a free submodule 
\bgm $W$ of $ H_1^{\varphi_-}(F_{g_-},\star) \oplus H_1^{\varphi_+}(F_{g_+},\star)$ \egm
of rank $g_-+g_+$, which is \emph{rationally} (i.e$.$ after taking coefficients in~$Q$) a supplementary subspace of $\Mag(M,\varphi)$. 
\bgm In addition to giving an ``operator'' viewpoint on the Magnus functor,
Theorem II fully computes the Alexander functor by showing that $\Alex(M,\varphi)$ splits into two parts: 
an ``operator'' part --- namely $\Mag_W (M,\varphi)$ ---  which is invariant under homology concordance $\sim_H$, and a ``scalar'' part ---  namely $\Delta(M,W)$ --- which does not have such property.
This generalizes a phenomenon that has  been observed in the special case of homology cobordisms \cite{FlMa14}: see  Remark~\ref{rem:hc}. \egm
We conclude by mentioning that the proof of this formula can be  adapted  to the situation of tangles, which implies a similar relationship between  the Alexander representation of  tangles constructed in \cite{BiCaFl15} (see also \cite{Ar10}, \cite{DaFl16}) and the functor of Cimasoni and Turaev.

The paper is organized as follows. In Section 2, we introduce the monoidal categories $\mathsf{Cob}_G$ and $\mathsf{pLagr}_R$. Section 3 is devoted to equivariant intersection forms for compact oriented surfaces. The functor  $\Mag$ is constructed in Section 4, where we also prove its monoidality and invariance under homology concordance. In Section 5, we give some examples and recipes for computations. Finally, Section 6 is devoted to the relation with the Alexander functor $\mathsf{A}$.
\bgm The paper ends with an appendix which briefly recalls the terminology of monoidal categories. \egm\\

\textbf{Conventions.} 
Let $X$ be a topological space with base point $\star$. The maximal abelian cover of $X$  based at $\star$ is denoted by $p_X~ \colon \widehat{X}\to X$, 
and the preferred lift of $\star$ is denoted by $\widehat{\star}$. (Here we assume  the appropriate conditions  on $X$ to have a universal cover.) For any oriented loop $\alpha$ in $X$ based at $\star$, the unique lift of $\alpha$ to $\widehat X$ starting at $\widehat \star$ is denoted by $\widehat \alpha$. If $X$ is an oriented manifold, we denote by $-X$ the same manifold with the opposite orientation.

Unless otherwise specified, (co)homology groups are taken with coefficients in the ring of integers~$\ZZ$; (co)homology classes are denoted with square brackets $[-]$. For any subspace $Y \subset X$ such that $\star \in Y$ and any ring homomorphism $\varphi~ \colon  \ZZ[H_1(X)] \to R$, we denote by $H^{\varphi}(X,Y)$ the \emph{$\varphi$-twisted homology} of the pair $(X,Y)$, namely
$$ H^{\varphi}(X,Y) := H(C^\varphi(X,Y)) \quad  \hbox{where} \ C^\varphi(X,Y) :=  R \otimes_{\ZZ[H_1(X)]} C\big(\widehat{X},p_X^{-1}(Y)\big). $$
If $(X',Y')$ is another topological pair and $f~ \colon (X',Y') \to (X,Y)$ is continuous,  the corresponding homomorphism $H(X') \to H(X)$ is still denoted by $f$. If a base point $\star'\in Y'$ is given and $f(\star')=\star$, the $R$-linear map $H^{\varphi f}(X',Y') \to H^{\varphi }(X,Y)$ induced by $f$ is also denoted by~$f$.

% ***************************************************************************
% ***************************************************************************
% ***************************************************************************

\section{The categories of cobordisms and Lagrangian relations} \label{Categories}

 In this section, we introduce two categories which will be respectively  the source and the target  of the Magnus functor to be constructed in Section \ref{FunctorFG}.

\subsection{The category  $\Cob$} 

We first recall the definition of the category  $\Cob$ of  {\itshape 3-dimensional cobordisms} introduced by Crane and Yetter \cite{CrYe99}. The objects of $\Cob$ are non-negative integers $g\geq 0$: the object $g$ refers to a compact, connected, oriented  surface $F_g$ of genus $g$ with one boundary component. The surface $F_g$ is fixed and will play the role of ``model'' surface.  Furthermore, we assume that the boundary component $\dep F_g$ is identified with $S^1$ and a base point $\star \in S^1 =\dep F_g$ is fixed. 

For any integers $g_+,g_-\geq 0$, a morphism $g_-\to g_+$ in $\Cob$ is a \emph{cobordism} from the surface $F_{g_{-}}$ to the surface $F_{g_{+}}$: specifically, this is an equivalence class of pairs $(M, m)$ consisting of a compact, connected, oriented 3-manifold $M$ and an orientation-preserving homeomorphism $m~ \colon  F ( g_{-}, g_{+} ) \to \dep M$, where
$$ F ( g_{-}, g_{+} ) := - F_{g_{-}} \cup_{S^1 \times \lbrace -1 \rbrace} \left( S^1 \times [-1, 1] \right) \cup_{S^1 \times \lbrace 1 \rbrace} F_{g_{+}}; $$
here two cobordisms $(M, m)$ and $(M', m')$ are said to be {\itshape equivalent} if there exists a homeomorphism $f~ \colon  M \to M'$  such that $m' = f|_{\dep M} \circ m$. Let $m_{\pm}~ \colon  F_{g_{\pm}} \to M$ be the composition of $m\vert_{F_{g_{\pm}}}$  with the inclusion of $\dep M$ into $M$, and  set $\partial_\pm M := m_\pm(F_{g_\pm})$:

$$
\labellist
\scriptsize\hair 2pt
\pinlabel {$\partial_+ M$} [r] at 1 186
 \pinlabel {$\partial_-M$} [r] at 0 75
 \pinlabel {$M$}  at 92 128
 \pinlabel {$m_+$} [r] at 91 222
 \pinlabel {$m_-$} [r] at 90 38
 \pinlabel {$F_{g_+}$} [r] at 2 255
 \pinlabel {$F_{g_-}$} [r] at 3 4
\endlabellist
\centering
\includegraphics[scale=0.29]{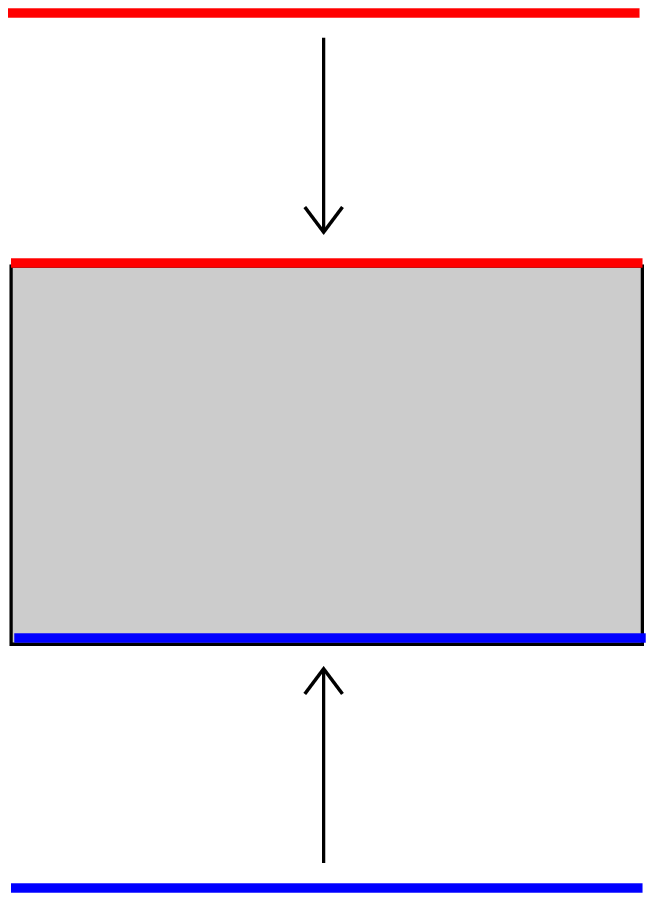}
$$

In the sequel, we will denote a cobordism simply by an upper-case letter $M,N,\dots$ meaning that the boundary-parametrization is denoted by the corresponding lower-case letter $m,n,\dots$ The composition $N \circ M$ of two cobordisms $M\in \Cob(g_-,g_+)$ and $N\in \Cob(h_-,h_+)$ is defined when $g_+=h_-$ by gluing $N$ \lq\lq on the top of\rq\rq{} $M$, i.e$.$ $\partial_+ M$ is identified with  $\partial_- N$ using the boundary parametrizations $m_+$ and $n_-$:

$$
\centre{\labellist
\scriptsize\hair 2pt
 \pinlabel {$N$}  at 92 128
 \pinlabel {$n_+$} [r] at 91 222
 \pinlabel {$n_-$} [r] at 90 38
 \pinlabel {$F_{h_+}$} [r] at 2 255
 \pinlabel {$F_{h_-}$} [r] at 3 4
\endlabellist
\centering
\includegraphics[scale=0.28]{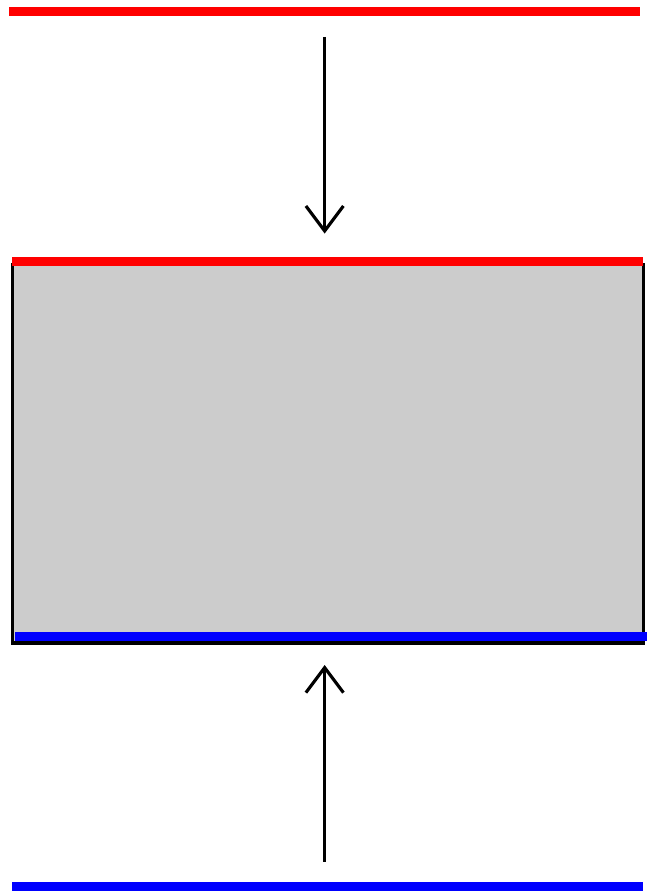}} \quad \circ \quad  
\centre{\labellist
\scriptsize\hair 2pt
 \pinlabel {$M$}  at 92 128
 \pinlabel {$m_+$} [r] at 91 222
 \pinlabel {$m_-$} [r] at 90 38
 \pinlabel {$F_{g_+}$} [r] at 2 255
 \pinlabel {$F_{g_-}$} [r] at 3 4
\endlabellist
\includegraphics[scale=0.28]{cobordism}} 
\quad := \quad 
\centre{\centering
\labellist
\scriptsize\hair 2pt
 \pinlabel {$F_{h_+}$} [r]  at 3 362
 \pinlabel {$F_{g_-}$} [r] at 2 4
 \pinlabel {$N$}  at 93 232
 \pinlabel {$M$}  at 92 126
 \pinlabel {$m_{-}$} [r] at 92 37
 \pinlabel {$n_+$} [r] at 91 331
\endlabellist
\centering
\includegraphics[scale=0.25]{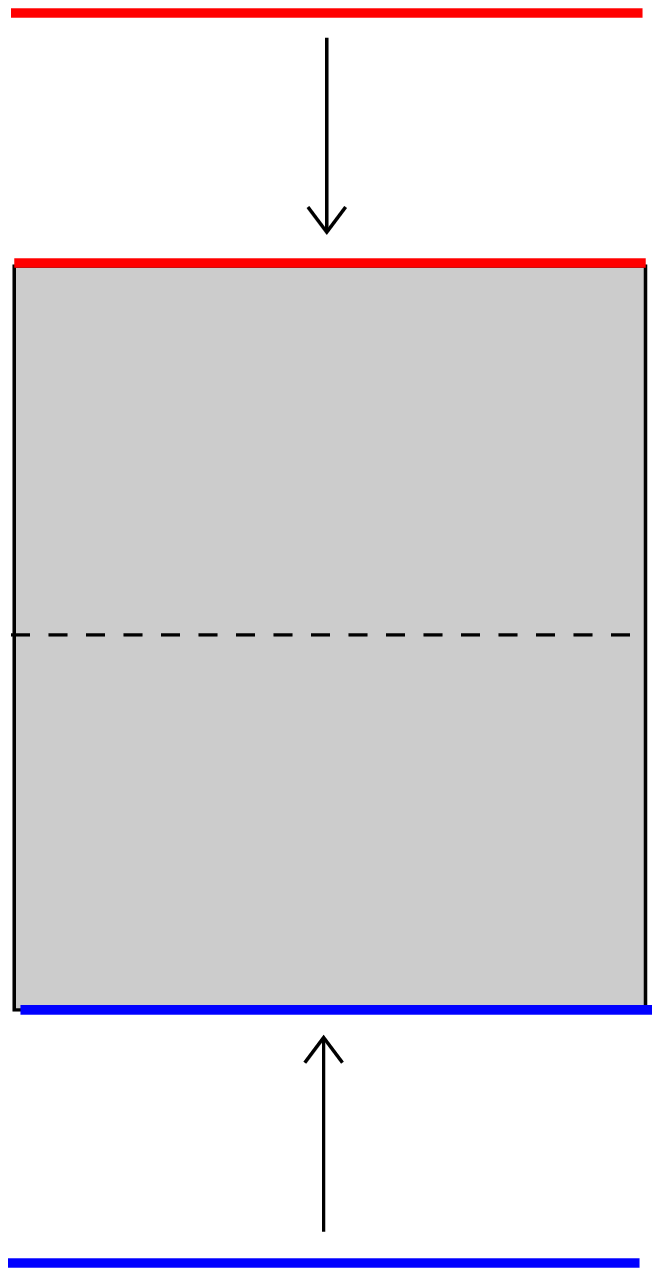}}
$$

For any integer $k \geq 0$, the identity of the object $k$  in $\Cob$ is the cylinder $F_k \times [-1, 1]$ with the boundary-parametrization defined by the identity maps. 

The category $\Cob$ can be enriched with a strict monoidal structure \cite{CrYe99}. 
\bgm (See Appendix A for a  brief review of the terminology of monoidal categories.) \egm 
We assume that, for any integer $g\geq 1$, the model surface $F_g$ is constructed by doing the iterated boundary-connected sum of $g$ copies of the model surface $F_1$ in genus $1$. Thus, for any  $g,k\geq 0$, the boundary-connected sum $F_g\, \sharp_\partial\, F_k$ is identified with $F_{g+k}$. The tensor product in the category $\Cob$ is defined  by $g \boxtimes k := g+k$ at the level of objects, and it is defined by $M \boxtimes N := M \sharp_\partial N$ at the level of morphisms:

$$
\centre{\labellist
\scriptsize\hair 2pt
 \pinlabel {$M$}  at 92 128
 \pinlabel {$m_+$} [r] at 91 222
 \pinlabel {$m_-$} [r] at 90 38
 \pinlabel {$F_{g_+}$} [r] at 2 255
 \pinlabel {$F_{g_-}$} [r] at 3 4
\endlabellist
\includegraphics[scale=0.3]{cobordism}} 
\quad \boxtimes \quad  
\centre{\labellist
\scriptsize\hair 2pt
 \pinlabel {$N$}  at 92 128
 \pinlabel {$n_+$} [r] at 91 222
 \pinlabel {$n_-$} [r] at 90 38
 \pinlabel {$F_{h_+}$} [r] at 2 255
 \pinlabel {$F_{h_-}$} [r] at 3 4
\endlabellist
\centering
\includegraphics[scale=0.3]{cobordism}} 
\quad := \qquad \qquad 
\centre{\labellist
\scriptsize \hair 2pt
 \pinlabel {$M$}   at 96 130
 \pinlabel {$N$}  at 277 132
 \pinlabel {$m_+ \sharp_\partial n_+$} [r] at 182 221
 \pinlabel {$m_- \sharp_\partial n_-$} [r] at 181 37
 \pinlabel {$F_{g_-+h_-}$} [r] at 2 5
 \pinlabel {$F_{g_++h_+}$} [r] at 2 255
\endlabellist
\centering
\includegraphics[scale=0.3]{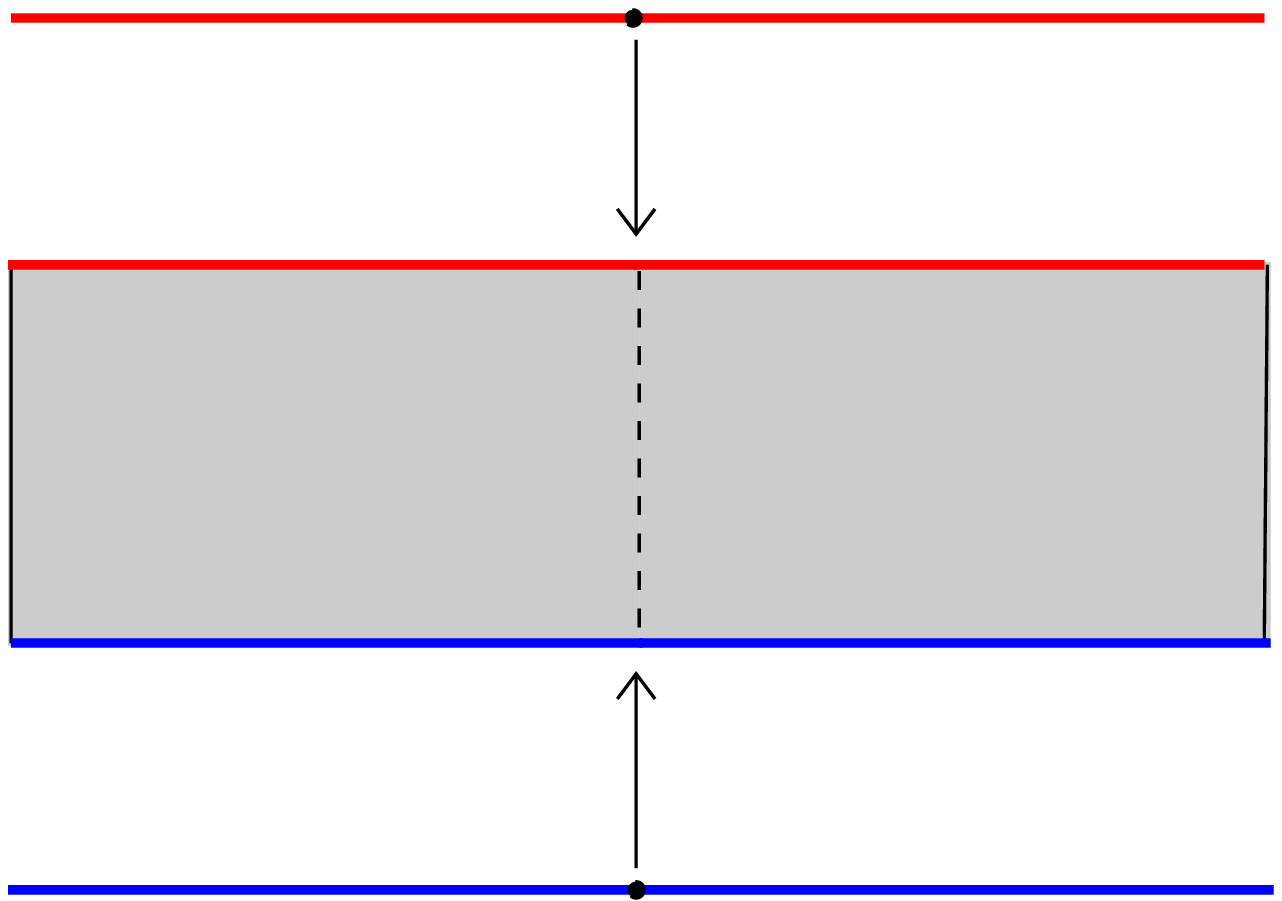}}
$$
\bgm The unit object of the monoidal category $\Cob$ is the integer $0$. \egm

% ***********************************************************************************
% ***********************************************************************************

\subsection{The category  $\Cob_G$}

Let $G$ be an abelian group. We now define the category $\Cob_{G}$ of {\itshape $3$-dimensional cobordisms with $G$-representations} following \cite{FlMa14}. The objects of $\Cob_{G}$ are pairs $(g, \gvf)$  consisting of an integer $g \geq 0$ and a group homomorphism $\gvf~ \colon  H_1 (F_g ) \to G$. A morphism $(g_{-}, \gvf_{-}) \rightarrow (g_{+}, \gvf_{+})$ in $\Cob_{G}$ is a pair $(M, \gvf)$ consisting of a cobordism $M \in \Cob(g_{-}, g_{+})$ and a group homomorphism $\gvf~ \colon  H_1 (M ) \to G$ such that $\gvf \circ m_{\pm} = \gvf_{\pm}$. The composition of two morphisms $(M, \gvf) \in \Cob_{G}((g_{-}, \gvf_{-}), (g_{+}, \gvf_{+}))$ and $(N, \psi) \in \Cob_{G}((h_{-}, \psi_{-}), (h_{+}, \psi_{+}))$ such that $(g_{+}, \gvf_{+}) = (h_{-}, \psi_{-})$, is defined by 
$$ (N, \psi) \circ (M, \gvf) := (N \circ M, \psi + \gvf)$$
where $N \circ M$ is the composition in $\Cob$ and $\psi + \gvf~ \colon  H_1 ( N \circ M ) \rightarrow G$ is defined from $\gvf$ and $\psi$ by using the Mayer--Vietoris theorem.

The strict monoidal structure of $\Cob$ extends to $\Cob_G$ in the following way. The tensor product of objects is defined by $ (g,\varphi) \boxtimes (h,\psi) := (g+h,\varphi \oplus \psi) $ where $H_1(F_{g+h})= H_1(F_g \sharp_{\partial} F_h)$ is identified with $H_1(F_g) \oplus H_1(F_h)$ using the Mayer--Vietoris theorem; the tensor product of morphisms is defined by $ (M,\varphi) \boxtimes (N,\psi) :=  (M \sharp_\partial N, \varphi \oplus \psi) $ where $H_1(M \sharp_{\partial} N)$ is identified with $H_1(M)\oplus H_1(N)$ using the Mayer--Vietoris theorem again.
\bgm The unit object of the monoidal category $\Cob_G$ is the pair consisting of the integer $0$ 
and the trivial group homomorphism $H_1(F_0) \to G$. \egm

% ***********************************************************************************
% ***********************************************************************************
 
\subsection{The category $\Lagr_R$}

We review the category of \emph{Lagrangian relations} introduced by Cimasoni and Turaev in~\cite{CiTu05I}. We first recall from~\cite[Section 2.1]{CiTu05I} some basic terminology. Let $R$ be a commutative ring without zero-divisors, and let  $R \rightarrow R,r \mapsto \overline{r}$ be an involutive ring homomorphism. A {\itshape skew-Hermitian form} on a $R$-module $H$ is a  map $\pai~ \colon  H \times H \to R$ which is {\itshape sesquilinear} and {\itshape skew-symmetric} in the sense that
\begin{flalign*}
(i)    	\quad  &\pai ( r x + r' x', y ) = r\, \pai(x, y) + r'\, \pai(x', y) &\\
(i')    \quad  &\pai ( y,r x + r' x') = \overline{r}\, \pai(y,x) + \overline{r'}\, \pai(y,x') &\\
(ii) 	\quad  &\pai (x, y) = - \overline{\pai (y, x)} &
\end{flalign*}
for all $ x, x', y \in H$ and for all $r, r' \in R$. Such a form $\pai$ is {\itshape non-degenerate} 
if the adjoint map  $ H \to \Hom_R(H,R)$ defined by  $x \mapsto \pai(x, \cdot)$ is injective.
A {\itshape skew-Hermitian $R$-module} is a finitely generated $R$-module $H$ equipped with a non-degenerate skew-Hermitian form $\pai$. 
(In particular, the $R$-module $H$ has no torsion.) 
Given a submodule $A$ of $H$, one can consider its {\itshape annihilator} with respect to $\pai$ 
$$\Ann(A) := \lbrace x \in H \, : \, \pai(x, A) = 0  \rbrace$$
and its {\itshape closure} 
$$\operatorname{cl}(A) := \lbrace x \in H \, : \, \exists r \in R\setminus\{0\}, \, r x \in A \rbrace.$$
A submodule $A$ of $H$ is said to be {\itshape Lagrangian} if $A = \Ann(A)$.

We now recall some material from \cite[Section 2.3]{CiTu05I}. Let $(H_1,\pai_1)$ and $(H_2,\pai_2)$ be skew-Hermitian $R$-modules. A {\itshape Lagrangian relation} between $(H_1,\pai_1)$ and $(H_2,\pai_2)$ is a submodule $N$ of $H_1 \oplus H_2$ which is Lagrangian with respect to the skew-Hermitian form $(-\pai_1)\oplus \pai_2$; in this case, we denote  $N~ \colon  (H_1,\pai_1) \Rightarrow (H_2,\pai_2)$. According to  \cite[Theorem 2.7]{CiTu05I}, there is a category  $\Lagr_{R}$ whose objects are skew-Hermitian $R$-modules, and whose morphisms are Lagrangian relations and are composed in the following way. The {\itshape composition} $N_2 \circ N_1$ of two Lagrangian relations $N_1~ \colon  (H_1,\pai_1) \Rightarrow (H_2,\pai_2)$ and $N_2~ \colon  (H_2,\pai_2) \Rightarrow (H_3,\pai_3)$ is the closure $\operatorname{cl}(N_2 \check \circ N_1)$ of the following submodule of $H_1 \oplus H_3$:
 \begin{equation} \label{eq:pre-composition}
 N_2 \check \circ N_1 := \big\lbrace (h_1 ,  h_3) \in H_1 \oplus H_3 \, :  \, \exists h_2 \in H_2, \ (h_1,  h_2 ) \in N_1 \; \text{and} \; ( h_2, h_3) \in N_2 \big\rbrace
\end{equation}
Note that, for any skew-Hermitian $R$-module $(H,\pai)$, the diagonal $ \big\lbrace (h,  h) \in H \oplus H \, \big| \, h \in H \big\rbrace$ is a Lagrangian relation $(H,\pai) \Rightarrow (H,\pai)$, and constitutes  the identity of the object $(H,\pai)$ in the category $\Lagr_{R}$. 

We finally outline the relationship between Lagrangian submodules and graphs of unitary isomorphisms following \cite[Section 2.4]{CiTu05I}. Let  $Q := Q(R)$ be the field of fractions of $R$: there is a unique way to extend the involution $r \mapsto \overline{r}$ of $R$  to a ring homomorphism of $Q$. For any skew-Hermitian $R$-module $(H,\pai)$, 
\bgm consider the \emph{rationalization} $H_Q := Q \otimes_R H$ of $H$: \egm 
note that,  since $H$ is  torsion-free, it embeds into $H_Q$ by the map $h \mapsto 1 \otimes h$. Let $\pai~ \colon H_Q \times H_Q \to Q$ be the extension of $\pai$  defined by
$$ \pai( q \otimes x, q' \otimes x' ) := q\, \overline{q'}\, \pai(x,x') $$
for any $x,x'\in H$ and $q,q' \in Q$. 

A {\itshape unitary $Q$-isomorphism}  (respectively, {\itshape unitary $R$-isomorphism}) between  two skew-Hermitian $R$-modules $(H_1,\pai_1)$ and $(H_2,\pai_2)$ is a $Q$-linear isomorphism $\psi~ \colon (H_1)_Q \to (H_2)_Q$ (respectively, a $R$-linear isomorphism $\psi~ \colon H_1 \to H_2$) such that ${\pai_2 \circ (\psi \times \psi) = \pai_1}$. Let $\UniQ$ (respectively, $\Uni$) be the category whose objects are skew-Hermitian $R$-modules and whose morphisms are unitary $Q$-isomorphisms (respectively, unitary $R$-isomorphisms). 
According to  \cite[Theorem 2.9]{CiTu05I}, there are embeddings of categories $\Uni \hookrightarrow \UniQ$, $\Uni \hookrightarrow \Lagr_{R}$ 
and $\UniQ \hookrightarrow \Lagr_{R}$ which fit in the commutative diagram
\begin{center}
	\begin{tikzpicture}[>=angle 90,scale=2.2,text height=1.5ex, text depth=0.25ex]
	%%First place the nodes
	\node (a0) at (0,1) {$\Uni$};
	\node (a1) [right=of a0] {$\UniQ$};
	\node (a2) [below=of a1] {$\Lagr_R$};
	
	%%Draw arrows
	\draw[right hook->]
	(a0) edge (a1)
	(a1) edge (a2)
	(a0) edge (a2);	
	\end{tikzpicture}
\end{center}
	
and which are  defined by $\psi \mapsto \bgm \Id_Q \egm \otimes_R \psi$, $\psi \mapsto \gG_\psi$ and $\psi \mapsto \gG_\psi^{\operatorname{r}}$, respectively. 
Here $\gG_\psi$ denotes the \bgm set-theoretical \egm graph of a $R$-linear isomorphism  \bgm $\psi: H_1 \to H_2$ defined by 
$$ 
\gG_{\psi} := \big\lbrace (h_1 , h_2) \in H_1 \oplus H_2 \,  : \, h_2 =\psi(h_1) \big\rbrace,
$$ 
\egm
while $\gG^{\operatorname{r}}_{\psi}$ denotes the {\itshape restricted graph} of a $Q$-linear isomorphism $\psi~ \colon  (H_1)_Q \to (H_2)_Q$
and is defined \bgm similarly \egm by
$$ 
\gG^{\operatorname{r}}_{\psi} := \big\lbrace (h_1 , h_2) \in H_1 \oplus H_2 \,  : \, h_2 =\psi(h_1) \big\rbrace.
$$

% ***********************************************************************************
% ***********************************************************************************
 
\subsection{The category $\pLagr_R$}  \label{pLagr}

Let $R$ be a commutative ring without zero-divisors, and let  $R \rightarrow R,r \mapsto \overline{r}$ be an involutive ring homomorphism.
We introduce a refinement of the category $\Lagr_R$, which seems to be new.

A \emph{pointed skew-Hermitian $R$-module} is a skew-Hermitian $R$-module $(H,\rho)$ equipped with a distinguished element $s \in H_Q \bgm = Q \otimes_R H \egm   $ \bgm of its rationalization, \egm
satisfying $\rho(s,s)=0$  and $\rho(s,H) \subset R$, where $\rho~ \colon H_Q \times H_Q \to Q$ denotes here the extension of $\rho$ to $H_Q $.  
 Let $(H_1,\pai_1,s_1)$ and $(H_2,\pai_2,s_2)$ be pointed skew-Hermitian modules: a \emph{pointed Lagrangian relation} $N~ \colon  (H_1,\pai_1,s_1) \Rightarrow (H_2,\pai_2,s_2)$ is a Lagrangian submodule $N$ of $(H_1 \oplus H_2, (-\pai_1) \oplus  \pai_2)$ such that 
$$(s_1,s_2) \in N_Q := Q \otimes_R N \subset (H_1)_Q \oplus (H_2)_Q.$$
 The  composition of Lagrangian relations induces a composition rule for pointed Lagrangian relations. Thus we get the category $\pLagr_R$ of \emph{pointed Lagrangian relations}.  Similarly, we can define some refinements $\pUni$ and $\pUniQ$ of the categories  $\Uni$ and $\UniQ$, respectively, by requiring that (the extensions of) the unitary $R$-isomorphisms and  the unitary $Q$-isomorphims preserve the distinguished elements. All these categories fit together into the commutative diagram
\begin{center}
	\iftikziii
	\begin{tikzcd}
		&   & \normalcolor \Uni \arrow[black, r, hook] \arrow[black, dr, hook] & \normalcolor \UniQ \arrow[black, d, hook]  \\
		\normalcolor \pUni \arrow[black, urr] \arrow[black, dr, hook] \arrow[black, r, hook] & \normalcolor \pUniQ \arrow[black, urr, crossing over] \arrow[black, d, hook] &   & \normalcolor \Lagr_R \\
		& \normalcolor \pLagr_R \arrow[black, urr] &   & \\
	\end{tikzcd}
	\else
	\includegraphics{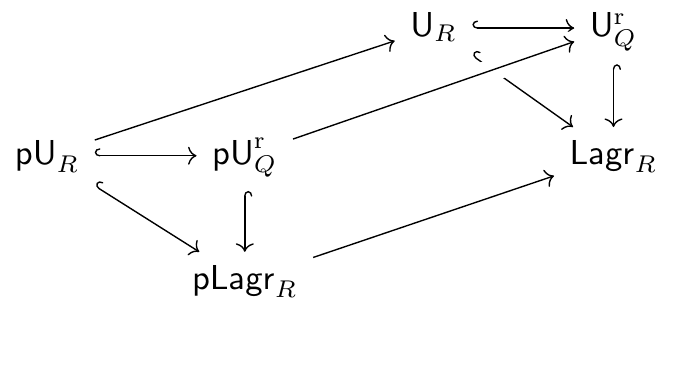}
	\fi
\end{center} \up
where the arrows $\pLagr_R \to \Lagr_R$, $\pUni \to\Uni$, $\pUniQ \to \UniQ$ denote the forgetful functors.

We now enrich the category $\pLagr_R$  with a monoidal structure. 
\bgm (See Appendix A for a  brief review of the terminology of monoidal categories.) \egm
For any  pointed skew-Hermitian modules $(H,\rho,s)$ and $(H',\rho',s')$, let  $\rho\, {{}_{s}\oplus_{s'}} \rho'~ \colon (H \oplus H') \times (H \oplus H') \to R$  be  defined by
$$(\rho\, {{}_{s}\oplus_{s'}}\rho')(h_1+h_1',h_2+h'_2)  := \rho(h_1,h_2)+   \rho'(h_1',h'_2) + \rho(h_1,s)\, \rho'(s',h'_2)  - \rho(s,h_2)\, \rho'(h'_1,s')$$
for any $h_1,h_2\in H$ and  $h'_1,h'_2\in H'$.

\begin{lemma} \label{lem:mon_ob}
For any objects $(H,\rho,s), (H',\rho',s')$ in $\pLagr_R$, the triple
$(H\oplus H', \rho\, {{}_{s}\oplus_{s'}} \rho', {s+s'})$ defines another object  in $\pLagr_R$.
\end{lemma}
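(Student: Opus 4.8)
The plan is to verify, in turn, the three conditions that make $(H\oplus H', \rho\,{{}_{s}\oplus_{s'}}\rho', s+s')$ a \emph{pointed} skew-Hermitian $R$-module: that $\rho\,{{}_{s}\oplus_{s'}}\rho'$ is a non-degenerate skew-Hermitian form on $H\oplus H'$, that $(\rho\,{{}_{s}\oplus_{s'}}\rho')(s+s',s+s')=0$, and that $(\rho\,{{}_{s}\oplus_{s'}}\rho')(s+s',H\oplus H')\subset R$. Throughout I would use the defining formula together with the hypotheses on the pointed modules, namely $\rho(s,s)=0$, $\rho(s,H)\subset R$, and likewise $\rho'(s',s')=0$, $\rho'(s',H')\subset R$; note these make all the ``correction terms'' $\rho(h_1,s)\,\rho'(s',h'_2)$ and $\rho(s,h_2)\,\rho'(h'_1,s')$ land in $R$, so the formula does define a map into $R$.

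First I would check sesquilinearity, i.e$.$ conditions $(i)$ and $(i')$: this is immediate since each of the four terms in the formula is separately $R$-linear in the ``first slot'' variables $h_1,h'_1$ and conjugate-linear in the ``second slot'' variables $h_2,h'_2$, using sesquilinearity of $\rho$ and $\rho'$ (and of their extensions to the rationalizations, evaluated at the fixed arguments $s,s'$). Next, skew-symmetry $(ii)$: writing out $(\rho\,{{}_{s}\oplus_{s'}}\rho')(h_2+h'_2,h_1+h'_1)$ and taking its conjugate, the first two terms give $-\rho(h_1,h_2)-\rho'(h'_1,h'_2)$ by skew-symmetry of $\rho,\rho'$; for the cross terms one computes $\overline{\rho(h_2,s)\,\rho'(s',h'_1)} = \overline{\rho(h_2,s)}\cdot\overline{\rho'(s',h'_1)} = \big(-\rho(s,h_2)\big)\big(-\rho'(h'_1,s')\big) = \rho(s,h_2)\,\rho'(h'_1,s')$, and similarly $\overline{\rho(s,h_1)\,\rho'(h'_2,s')} = \rho(h_1,s)\,\rho'(s',h'_2)$, so the sign bookkeeping matches the formula exactly. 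Then I would verify the two pointedness identities by direct substitution $h_1=h_2=s$, $h'_1=h'_2=s'$: the first two terms vanish by $\rho(s,s)=0=\rho'(s',s')$, and the cross terms become $\rho(s,s)\,\rho'(s',s') - \rho(s,s)\,\rho'(s',s') = 0$; and for $(\rho\,{{}_{s}\oplus_{s'}}\rho')(s+s', h_1+h'_1)$ with arbitrary $h_1,h'_1$ one sees each term lies in $R$ using $\rho(s,H)\subset R$, $\rho'(s',H')\subset R$ together with $\rho(s,s),\rho'(s',s')\in R$.

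The one step that genuinely requires an argument rather than bookkeeping is \textbf{non-degeneracy}, and this is where I expect the main obstacle to lie. The cleanest route is to exhibit the form $\rho\,{{}_{s}\oplus_{s'}}\rho'$ as obtained from the plain orthogonal sum $(-0)\oplus$-type construction by a change of variables: define the $R$-linear automorphism $\Phi$ of $H\oplus H'$ (or rather of its rationalization, then check it restricts) by a ``shear'' that adds to each vector a suitable multiple of $s$ and $s'$ determined by pairing against the other factor — concretely something of the shape $\Phi(h+h') = h+h' + \rho'(s',h')\,s - \rho(s,h)\,s'$ up to getting the signs right — and show $(\rho\,{{}_{s}\oplus_{s'}}\rho')(x,y) = (\rho\oplus\rho')(\Phi x,\Phi y)$, or that the two forms differ by composition with such an automorphism. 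Since $\rho\oplus\rho'$ is non-degenerate (being the orthogonal sum of non-degenerate forms) and $\Phi$ is invertible, non-degeneracy of $\rho\,{{}_{s}\oplus_{s'}}\rho'$ follows. The delicate point is to confirm $\Phi$ really is an $R$-linear (not merely $Q$-linear) automorphism: this holds because $\rho(s,h)\in R$ and $\rho'(s',h')\in R$ for $h\in H$, $h'\in H'$, so $\Phi$ preserves the integral lattice $H\oplus H'$, and its inverse is the shear in the opposite direction, which is integral for the same reason. Alternatively, if identifying the exact shear is fiddly, one can argue non-degeneracy directly: suppose $h+h'$ annihilates all of $H\oplus H'$; pairing with $h_2\in H$ alone (taking $h'_2=0$) gives $\rho(h,h_2) + \rho(h,s)\,\rho'(s',0) - \rho(s,h_2)\,\rho'(h',s') = \rho(h,h_2) - \rho(s,h_2)\,\rho'(h',s') = 0$ for all $h_2$, i.e$.$ $\rho\big(h - \rho'(h',s')\,s,\ h_2\big)=0$ in $Q$ for all $h_2\in H$, whence $h = \rho'(h',s')\,s$ in $H_Q$ by non-degeneracy of $\rho$ over $Q$; feeding this back into the pairing with $h'_2\in H'$ and using $\rho(s,s)=0$ forces $\rho'(h',h'_2)=0$ for all $h'_2$, so $h'=0$ and hence $h=0$. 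Either way the lemma follows; I would present the shear argument as the main one since it also makes transparent why this is the ``right'' monoidal sum, and is what one needs later to check the Lagrangian condition in Proposition~\ref{prop:monoidality}.
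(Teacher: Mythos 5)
Your fallback \emph{direct} argument for non-degeneracy is essentially the paper's proof: the paper simply substitutes $h_2=s$, $h'_2=0$ into the vanishing hypothesis to get $\rho(h_1,s)=0$, then takes $h_2=0$ to conclude $\rho'(h'_1,\cdot)=0$ and hence $h'_1=0$, then symmetrically $h_1=0$. Your version derives $h=\rho'(h',s')\,s$ in $H_Q$ first and feeds it back; that is the same computation organized slightly differently. The remaining verifications (sesquilinearity, skew-symmetry, the two pointedness identities) match what the paper leaves as routine.

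Your preferred \emph{shear} argument, however, contains a genuine error at the very point you flag as delicate. You assert that $\Phi$ preserves $H\oplus H'$ because the coefficients $\rho(s,h)$, $\rho'(s',h')$ lie in $R$. But the issue is not the coefficients: $s$ and $s'$ live only in the rationalizations $H_Q$, $H'_Q$, not in $H$, $H'$, so $\Phi(h+h')$ generally does not lie in $H\oplus H'$, and $\Phi$ is only a $Q$-linear automorphism of $H_Q\oplus H'_Q$. The argument can be rescued, because for a finitely generated torsion-free module over an integral domain, non-degeneracy over $R$ is equivalent to non-degeneracy of the $Q$-linear extension over $Q$, so a $Q$-linear shear does suffice; but that is a different justification, and the passage about preserving the integral lattice should be removed. (Separately, the shear you wrote, which shifts both coordinates, does not reproduce the form; the one-sided shear $\Phi(h+h')=\big(h-\rho'(h',s')\,s\big)+h'$ does satisfy $(\rho\,{}_{s}\!\oplus_{s'}\rho')(x,y)=(\rho\oplus\rho')(\Phi x,\Phi y)$, using $\rho(s,s)=\rho'(s',s')=0$ and $-\overline{\rho'(h',s')}=\rho'(s',h')$.) Given both points, the direct argument — which is what the paper uses — is preferable here as it avoids the rationalization subtlety entirely.
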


\begin{proof}
The fact that $\rho\, {{}_{s}\oplus_{s'}} \rho'$ is sesquilinear and skew-symmetric follows easily from the same properties for $\rho$ and $\rho'$.
To prove  its non-degeneracy, assume that m $h_1\in H$ and $h'_1 \in H'$ satisfy
\begin{equation} \label{eq:zero}
\forall h_2 \in H, \ \forall h'_2 \in H', \quad  \rho(h_1,h_2) + \rho'(h_1',h'_2) + \rho(h_1,s)\, \rho'(s',h'_2) - \rho(s,h_2)\, \rho'(h'_1,s') = 0.
\end{equation} 
Taking $h_2  =  s$ and $h'_2 = 0$ in \eqref{eq:zero}, we obtain  $\rho(h_1,s) = 0$. Next, taking $h_2  = 0$ in \eqref{eq:zero}, we obtain $\rho'(h'_1,h'_2)=0$ for any $h'_2 \in H'$; it follows that $h'_1=0$. Similarly, we obtain $h_1 = 0$. This shows that  $(H\oplus H',\rho\, {{}_{s}\oplus_{s'}} \rho')$ is a skew-Hermitian $R$-module. That it can be upgraded to a pointed skew-Hermitian $R$-module by adjoining the element $s+s'$  is easily verified.
\end{proof}

\begin{lemma} \label{lem:mon_mor}
For any two morphisms $N~ \colon  (H,\pai,s) \Rightarrow (K,\tau,t)$ and $N'~ \colon  (H',\pai',s') \Rightarrow (K',\tau',t')$  in $\pLagr_R$, the submodule 
$$ N \oplus N' \subset (H \oplus K) \oplus (H' \oplus K') \simeq (H \oplus H') \oplus (K \oplus K') $$
is a morphism from $(H\oplus H', \rho\, {{}_{s}\oplus_{s'}} \rho', {s+s'})$ to $(K\oplus K', \tau\, {{}_{t}\oplus_{t'}} \tau', {t+t'})$ in $\pLagr_R$.
\end{lemma}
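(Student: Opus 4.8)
The plan is to verify directly that $N\oplus N'$ satisfies the defining conditions of a pointed Lagrangian relation between the two pointed skew-Hermitian modules produced by Lemma~\ref{lem:mon_ob}. There are two things to check: first, that $N\oplus N'$ is a Lagrangian submodule of $(H\oplus H')\oplus(K\oplus K')$ for the form $(-(\rho\, {{}_{s}\oplus_{s'}} \rho'))\oplus(\tau\, {{}_{t}\oplus_{t'}} \tau')$; second, that $(s+s',t+t')$ lies in the rationalization $(N\oplus N')_Q = N_Q\oplus N'_Q$.

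For the second point, observe that $(s,t)\in N_Q$ because $N$ is a pointed Lagrangian relation, and likewise $(s',t')\in N'_Q$; under the reordering isomorphism $(H\oplus K)\oplus(H'\oplus K')\simeq(H\oplus H')\oplus(K\oplus K')$ the pair $\big((s,t),(s',t')\big)$ is sent to $\big((s,s'),(t,t')\big) = (s+s',t+t')$, which therefore lies in $N_Q\oplus N'_Q$. This part is immediate.

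For the first point, I would first note that $N\oplus N'$ is its own annihilator for the naive form $(-\rho)\oplus\rho'\oplus\dots$ obtained as the plain orthogonal sum, since $N$ and $N'$ are each Lagrangian; the issue is that the form $\rho\, {{}_{s}\oplus_{s'}} \rho'$ differs from $\rho\oplus\rho'$ by the two ``skew'' correction terms involving $s$ and $s'$. So the real content is to show these correction terms do not disturb the property $A=\operatorname{Ann}(A)$ for $A=N\oplus N'$. The key input is the pointedness of $N$ and $N'$: for any $(h_1,k_1)\in N$ one has $(-\pai)\oplus\tau$ pairing against $(s,t)\in N_Q$ equal to zero, i.e$.$ $\pai(h_1,s)=\tau(k_1,t)$ (rationally, hence in $R$ after clearing denominators, but in fact the pointedness axiom $\rho(s,H)\subset R$ guarantees these are already elements of $R$); and similarly $\pai'(h'_1,s')=\tau'(k'_1,t')$ for $(h'_1,k'_1)\in N'$. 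Writing out the big form $\big(-(\rho\, {{}_{s}\oplus_{s'}} \rho')\big)\oplus\big(\tau\, {{}_{t}\oplus_{t'}} \tau'\big)$ evaluated on two elements of $N\oplus N'$, the cross-terms coming from the correction pieces become, after substituting these identities, a common factor that matches up on the $H\oplus H'$ side and the $K\oplus K'$ side, so that the total reduces to $\big((-\rho)\oplus\tau\big)$-pairing on $N$ plus $\big((-\rho')\oplus\tau'\big)$-pairing on $N'$, both of which vanish; hence $N\oplus N'\subset\operatorname{Ann}(N\oplus N')$. For the reverse inclusion $\operatorname{Ann}(N\oplus N')\subset N\oplus N'$, I would argue that an element orthogonal to all of $N\oplus N'$ is in particular orthogonal to $N\oplus 0$ and to $0\oplus N'$; using the same substitution to absorb the correction terms, orthogonality to $N\oplus 0$ forces the $H\oplus K$-component to lie in $\operatorname{Ann}(N)=N$ (here one uses that evaluating against elements $(h_1,k_1,0,0)$ with $(h_1,k_1)\in N$, the $s'$-dependent terms drop because the second argument has zero $H'\oplus K'$-part, or are controlled by the pointedness relation), and symmetrically the $H'\oplus K'$-component lies in $N'$. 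A small point to be careful about: one must check that $N\oplus N'$ has the right rank, but since $N$ is Lagrangian in a module of rank $\operatorname{rank} H+\operatorname{rank} K$ it has rank $(\operatorname{rank} H+\operatorname{rank} K)/2$ wherever that makes sense, and additivity of ranks under direct sum gives the claim; alternatively the equality $A=\operatorname{Ann}(A)$ already encodes this, so no separate rank count is needed.

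The main obstacle will be bookkeeping in the cross-term cancellation: one has to expand $\big(-(\rho\, {{}_{s}\oplus_{s'}} \rho')\big)\oplus\big(\tau\, {{}_{t}\oplus_{t'}} \tau')$ on a general pair of elements of $N\oplus N'$, keep track of the four correction monomials (two from each side), and recognize that the identities $\pai(h_1,s)=\tau(k_1,t)$ and $\pai'(h'_1,s')=\tau'(k'_1,t')$ — valid because $N$, $N'$ are pointed — make the $s,s'$-weighted terms coming from the domain and codomain forms cancel in pairs. This is entirely routine sesquilinear algebra once the pointedness relations are isolated, so I would state the cancellation as a short computation rather than displaying every monomial; the rank/closure subtleties are handled automatically by the annihilator formulation, and the proof that the resulting object is the correct one in $\pLagr_R$ follows from Lemma~\ref{lem:mon_ob}.
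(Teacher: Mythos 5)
Your overall plan matches the paper's: check the distinguished element $(s+s',t+t')$ lies in $(N\oplus N')_Q$, then verify $N\oplus N' = \operatorname{Ann}(N\oplus N')$ by expanding the form and cancelling correction terms via the pointedness identities $\rho(h_1,s)=\tau(k_1,t)$ and $\rho'(h'_1,s')=\tau'(k'_1,t')$. The forward inclusion $N\oplus N'\subset\operatorname{Ann}(N\oplus N')$ is handled correctly.

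The gap is in the reverse inclusion $\operatorname{Ann}(N\oplus N')\subset N\oplus N'$. You claim that orthogonality to $N\oplus 0$ alone forces the $H\oplus K$-component of an annihilator element $y=(h_2,h'_2,k_2,k'_2)$ to lie in $\operatorname{Ann}(N)=N$, and you justify this by saying the $s'$-terms drop because the test element $(h_1,0,k_1,0)$ has zero $H'\oplus K'$-part. But that is not what happens: the surviving correction in the expansion of $\kappa\bigl(y,(h_1,0,k_1,0)\bigr)$, after using $\rho(s,h_1)=\tau(t,k_1)$, is $\rho(s,h_1)\bigl[\rho'(h'_2,s')-\tau'(k'_2,t')\bigr]$, which involves $h'_2,k'_2$ — the untested components of $y$, not the zero components of the test vector. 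The pointedness of $N$ and $N'$ says nothing a priori about an arbitrary element of the annihilator, so this bracket does not vanish ``by pointedness''; the decoupling you want simply isn't automatic. The missing idea, which is exactly what the paper supplies, is to first test the orthogonality of $y$ against the rationalized distinguished elements $(s,0,t,0)\in(N\oplus 0)_Q$ and $(0,s',0,t')\in(0\oplus N')_Q$. Using $\rho(s,s)=\tau(t,t)=\rho'(s',s')=\tau'(t',t')=0$, these two tests yield $\tau(t,k_2)=\rho(s,h_2)$ and $\tau'(t',k'_2)=\rho'(s',h'_2)$. Only once these auxiliary constraints are in hand do the cross-terms cancel, and then orthogonality to $N\oplus 0$ and $0\oplus N'$ gives $(h_2,k_2)\in\operatorname{Ann}(N)=N$ and $(h'_2,k'_2)\in\operatorname{Ann}(N')=N'$ as you intended. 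Without this preliminary step, the argument as written does not go through.
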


\begin{proof}
Clearly $(s+s')+(t+t')= (s+t)+(s'+t')$ belongs to $N_Q \oplus N'_Q$, and it remains to verify that $N \oplus N'$ is Lagrangian with respect to the skew-Hermitian form 
$$ \kappa:= -( \rho\, {{}_{s}\oplus_{s'}} \rho') \oplus (\tau\, {{}_{t}\oplus_{t'}} \tau')~ \colon  (H \oplus H' \oplus K \oplus K') \times  (H \oplus H' \oplus K \oplus K')  \longrightarrow R. $$
For any $h_1,h_2 \in H$,  $h'_1,h'_2 \in H'$,  $k_1,k_2 \in K$,  $k'_1,k'_2 \in K'$, we have
\begin{eqnarray}
\label{kappa_expanded} && \kappa(h_1+h'_1+k_1+k'_1, h_2+h'_2+k_2+k'_2) \\
\notag & = & 
-\rho(h_1,h_2) - \rho'(h'_1,h'_2) - \rho(h_1,s)\, \rho'(s',h'_2)  + \rho(s,h_2)\, \rho'(h'_1,s') \\
\notag && + \tau(k_1,k_2) + \tau'(k'_1,k'_2) + \tau(k_1,t)\, \tau'(t',k'_2)  - \tau(t,k_2)\, \tau'(k'_1,t').
\end{eqnarray}
To prove that $N \oplus N' \subset \Ann(N \oplus N' )$, consider some arbitrary elements 
$$ (h_1+k_1)+ (h'_1+k'_1) \in  N \oplus N' \subset (H \oplus K) \oplus (H '\oplus K') $$
and
$$ (h_2+k_2)+ (h'_2+k'_2) \in  N \oplus N' \subset (H \oplus K) \oplus (H '\oplus K'). $$
 Then it can be verified that the eight summands in \eqref{kappa_expanded} cancel pairwisely: 
for instance, the facts that $h_1 + k_1, s+ t\in N_Q$ and $s' + t',  h'_2+k'_2 \in N'_Q$ imply that $\rho(h_1,s) = \tau(k_1,t)$
and $\rho'(s',h'_2) =\tau'(t',k'_2)$, respectively, so that $ \rho(h_1,s)\, \rho'(s',h'_2) =  \tau(k_1,t)\, \tau'(t',k'_2)$.
 We conclude that $\kappa(h_1+h'_1+k_1+k'_1, h_2+h'_2+k_2+k'_2)=0$.

To prove now that $ \Ann(N \oplus N' ) \subset N \oplus N'$, consider an arbitrary element
$$ (h_2+k_2)+ (h'_2+k'_2) \in \Ann( N \oplus N' )\subset (H \oplus K) \oplus (H '\oplus K'). $$
Using \eqref{kappa_expanded}, we  obtain the following: 
\begin{equation}
\label{eq:ppp}  \forall h_1+k_1 \in N_Q, \quad    -\rho(h_1,h_2)  - \rho(h_1,s)\, \rho'(s',h'_2)  + \tau(k_1,k_2)  + \tau(k_1,t)\, \tau'(t',k'_2) =  0   , 
\end{equation}
\begin{equation} \label{eq:ppp'}
 \forall h'_1+k'_1 \in N'_Q, \quad  - \rho'(h'_1,h'_2) + \rho(s,h_2)\, \rho'(h'_1,s')  + \tau'(k'_1,k'_2)  - \tau(t,k_2)\, \tau'(k'_1,t')    =  0.  
\end{equation}
Applying \eqref{eq:ppp} to $h_1+k_1  =  s+t$ gives $\tau(t,k_2)= \rho(s,h_2)$ and, since $h'_1+k'_1, s'+t'\in N'_Q$, we also have $\rho'(h'_1,s')= \tau'(k'_1,t')$.  Hence \eqref{eq:ppp'} now gives
 $ -\rho'(h'_1,h'_2) + \tau'(k'_1,k'_2) =0$  for all $h'_1+k'_1 \in N'$,
and it follows that $h'_2+k'_2 \in \Ann(N')\subset N'$.
We obtain in a similar way that $h_2+k_2 \in \Ann(N)\subset N$ and we conclude that $(h_2+k_2)+ (h'_2+k'_2) \in N \oplus N'$.
\end{proof}

\begin{proposition} \label{prop:mono}
There \bgm exists \egm  a monoidal structure on the category $\pLagr_R$ \bgm whose tensor product is \egm  defined by 
\begin{equation} \label{eq:tp_ob}
(H,\pai,s) \boxtimes (H',\pai',s') := (H\oplus H', \rho\, {{}_{s}\oplus_{s'}} \rho', s+ s')
\end{equation}
for any objects  $(H,\rho,s), (H',\rho',s')$ in $\pLagr_R$, and by 
\begin{equation} \label{eq:tp_mor}
N \boxtimes N' := N \oplus N'  
\end{equation}
for any morphisms  $N~ \colon  (H,\pai,s) \Rightarrow (K,\tau,t)$  and $N'~ \colon  (H',\pai',s') \Rightarrow (K',\tau',t')$  in $\pLagr_R$.
\end{proposition}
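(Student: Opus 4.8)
The plan is to verify directly that \eqref{eq:tp_ob}--\eqref{eq:tp_mor} equip $\pLagr_R$ with a monoidal structure (in fact a strict one, modulo the usual identification of iterated direct sums of $R$-modules), the only step requiring an actual computation being the associativity of the operation $\rho \mapsto \rho\, {{}_{s}\oplus_{s'}} \rho'$ on forms. To begin with, Lemmas \ref{lem:mon_ob} and \ref{lem:mon_mor} already guarantee that \eqref{eq:tp_ob} and \eqref{eq:tp_mor} produce, respectively, objects and morphisms of $\pLagr_R$. To see that $\boxtimes$ is a functor $\pLagr_R \times \pLagr_R \to \pLagr_R$, I would note that under the canonical reshuffling isomorphism $(H \oplus H) \oplus (H' \oplus H') \simeq (H \oplus H') \oplus (H \oplus H')$ the diagonal of $H \oplus H'$ is carried to the direct sum of the diagonals of $H$ and of $H'$, so $\boxtimes$ preserves identities; and for composability one checks from \eqref{eq:pre-composition} that $(N_2 \oplus N_2')\, \check\circ\, (N_1 \oplus N_1')$ and $(N_2\, \check\circ\, N_1) \oplus (N_2'\, \check\circ\, N_1')$ are the same submodule up to the evident reshuffling of direct summands, while $\operatorname{cl}(A \oplus B) = \operatorname{cl}(A) \oplus \operatorname{cl}(B)$ for any submodules $A, B$ --- here one uses that $R$ has no zero-divisors to turn witnesses $r x \in A$, $r' y \in B$ into a single witness $rr' \neq 0$ with $rr'(x,y) \in A \oplus B$. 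Hence $\boxtimes$ respects composition.

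The unit object is the zero pointed module $(0,0,0)$, which is legitimate since $\rho(0,0) = 0$. For any $(H',\rho',s')$ one has $0 \oplus H' = H'$ and $0 + s' = s'$, and the form ${{}_{0}\oplus_{s'}}\,\rho'$ coincides with $\rho'$ on $0 \oplus H' = H'$ because each of its correction terms involves an evaluation of the first (zero) form and hence vanishes; the same holds on the right. Thus the left and right unit constraints may be taken to be identities, and the triangle axiom is immediate.

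For associativity, the underlying module of the triple product is $H \oplus H' \oplus H''$ and the distinguished element is $(s+s') + s'' = s + (s'+s'')$, so only the forms need attention. The key claim is the identity
$$
\big(\rho\, {{}_{s}\oplus_{s'}} \rho'\big)\, {{}_{s+s'}\oplus_{s''}} \rho'' \;=\; \rho\, {{}_{s}\oplus_{s'+s''}} \big(\rho'\, {{}_{s'}\oplus_{s''}} \rho''\big).
$$
This follows by a direct expansion once one observes that all the ``outer'' mixed evaluations appearing on either side, such as $\big(\rho\, {{}_{s}\oplus_{s'}} \rho'\big)(h_1 + h_1',\, s+s')$ and $\big(\rho\, {{}_{s}\oplus_{s'}} \rho'\big)(s+s',\, h_2 + h_2')$, collapse to $\rho(h_1,s) + \rho'(h_1',s')$ and $\rho(s,h_2) + \rho'(s',h_2')$ respectively --- and this is precisely where the conditions $\rho(s,s) = 0$ and $\rho'(s',s') = 0$ from the definition of a pointed skew-Hermitian module are used. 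After this simplification both sides reduce to the same sum of nine terms (the three ``diagonal'' terms $\rho(h_1,h_2)$, $\rho'(h_1',h_2')$, $\rho''(h_1'',h_2'')$ together with six mixed terms of the form $\pm\,\rho(h_1,s)\rho'(s',h_2')$ and its analogues), so they agree. Consequently the canonical associativity isomorphism of $R$-modules is a pointed unitary isomorphism, hence lifts to a morphism of $\pLagr_R$ via its graph, and it is natural in its three arguments because composition of Lagrangian relations on a direct sum is computed componentwise. The pentagon axiom then holds automatically, being inherited from the corresponding trivial identity for $R$-modules. The main --- indeed essentially the only --- piece of real work is the displayed associativity identity for the forms; the built-in vanishing conditions $\rho(s,s) = 0$, $\rho'(s',s') = 0$ are exactly the role played by these conditions, in that they prevent the two bracketings from drifting apart.
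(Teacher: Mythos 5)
Your proposal is correct and follows essentially the same route as the paper: Lemmas \ref{lem:mon_ob} and \ref{lem:mon_mor} handle well-definedness, functoriality of $\boxtimes$ is checked via the direct-sum factorization of $\check\circ$ together with the fact that closures commute with direct sums (the paper unfolds this by hand, but it is the same computation), the unit object is $(\{0\},0,0)$, and the key step is the nine-term identity $\big(\rho\, {{}_{s}\oplus_{s'}} \rho'\big)\, {{}_{s+s'}\oplus_{s''}} \rho'' = \rho\, {{}_{s}\oplus_{s'+s''}} \big(\rho'\, {{}_{s'}\oplus_{s''}} \rho''\big)$, verified exactly as the paper does by collapsing the mixed evaluations using $\rho(s,s)=\rho'(s',s')=0$. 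The only cosmetic difference is that you treat the unit/associativity constraints as identities from the start (the paper's Remark \ref{rem:not_strict} viewpoint), whereas the paper formally builds them as graphs of pointed unitary isomorphisms via the functor $\pUni \to \pLagr_R$ and then observes the coherence axioms descend from $\pUni$.
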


\begin{proof}
Using Lemma \ref{lem:mon_ob} and Lemma \ref{lem:mon_mor}, 
we first verify that \eqref{eq:tp_ob} and  \eqref{eq:tp_mor} define a bifunctor $\boxtimes~ \colon   \pLagr_R \times \pLagr_R \to \pLagr_R$. That $\boxtimes$ maps the identities to the identities is clear, and we only need to check that $\boxtimes$ preserves the compositions.
Consider some morphisms in $\pLagr_R$
$$
\mathcal{H}_1 \stackrel{N_1\ }{\Longrightarrow} \mathcal{H}_2 \stackrel{N_2\ }{\Longrightarrow} \mathcal{H}_3
\quad \hbox{and} \quad \mathcal{H}'_1 \stackrel{N'_1\ }{\Longrightarrow} \mathcal{H}'_2 \stackrel{N'_2\ }{\Longrightarrow} \mathcal{H}'_3
$$
where $\mathcal{H}_i:=(H_i,\rho_i,s_i)$ and $\mathcal{H}'_i:=(H'_i,\rho'_i,s'_i)$ for each $i\in\{1,2,3\}$. 
The submodule $(N_2 \circ N_1) \boxtimes (N'_2 \circ N'_1)$ consists of those elements $(h_1 + h'_1) + (h_3+h'_3)$  of $(H_1 \oplus H'_1) \oplus  (H_3 \oplus H'_3)$ such that 
$h_1 +h_3 \in \cl (N_2 \check \circ N_1)$  and $h'_1 +h'_3 \in \cl (N'_2 \check \circ N'_1)$
or, equivalently, such that
$$
\exists r\in R\setminus \{0\}, \ \exists h_2 \in H_2, \ \left\{ \begin{array}{l} rh_1+h_2 \in N_1 \\  h_2 +r h_3 \in N_2 \end{array}\right.
\quad \hbox{and} \quad 
\exists r'\in R\setminus \{0\}, \ \exists h'_2 \in H'_2, \ \left\{ \begin{array}{l} r'h'_1+h'_2 \in N'_1 \\  h'_2 +r' h'_3 \in N'_2 \end{array}\right.;
$$
on the other hand, the submodule $(N_2 \boxtimes N'_2) \circ (N_1 \boxtimes N'_1)$  consists of those elements $(h_1 +h'_1) + (h_3+h'_3)$  of ${(H_1 \oplus H'_1)} \oplus  {(H_3 \oplus H'_3)}$ such that
$$
\exists r\in R\setminus \{0\}, \ \exists (h_2+h'_2) \in (H_2\oplus H'_2), \ \left\{  \begin{array}{l} r(h_1+h'_1) + (h_2+h'_2) \in N_1 \boxtimes N'_1 \\  (h_2+h'_2) + r (h_3+h'_3) \in N_2 \boxtimes N'_2 
\end{array} \right.
$$
or, equivalently, such that
$$
\exists r\in R\setminus \{0\}, \ \exists (h_2+h'_2) \in (H_2 \oplus H'_2), \ \left\{  \begin{array}{l} r h_1+h_2  \in N_1,   \ r h'_1+h'_2  \in N'_1 \\  h_2+ rh_3 \in N_2, \   h'_2+ rh'_3 \in N'_2
\end{array} \right.;
$$
it follows that $(N_2 \circ N_1) \boxtimes (N'_2 \circ N'_1) = (N_2 \boxtimes N'_2) \circ (N_1 \boxtimes N'_1)$.

The \emph{trivial} pointed skew-Hermitian module, namely $\mathcal{I}:=(\lbrace 0 \rbrace , 0, 0)$, will be the unit object in $\pLagr_R$. For any pointed skew-Hermitian module $\mathcal{H}:=(H,\rho,s)$, 
the obvious isomorphisms $\bgm  \lbrace 0 \rbrace  \oplus H    \egm \to H$ and $\bgm H \oplus \lbrace 0 \rbrace \egm \to H$ define some isomorphisms 
$$
L_{\mathcal{H}}~ \colon  \bgm \mathcal{I} \boxtimes \mathcal{H}     \egm \longrightarrow \mathcal{H}
\quad \hbox{and} \quad R_{\mathcal{H}}~ \colon   \bgm  \mathcal{H} \boxtimes \mathcal{I}\egm  \longrightarrow \mathcal{H}
$$ 
in the category $\pUni$. By applying the ``graph'' functor $\pUni \to \pLagr_R$, we obtain some isomorphisms  $L_{\mathcal{H}}, R_{\mathcal{H}}$ in $\pLagr_R$ which are natural in $\mathcal{H}$. They will be the unit constraints in $\pLagr_R$.

To define the associativity constraints in $\pLagr_R$, consider some pointed skew-Hermitian modules  $\mathcal{H} := (H, \rho, s)$, $\mathcal{H}' := (H', \rho', s')$ and $\mathcal{H} '':= (H'', \rho'', s'')$. We claim that the obvious isomorphism of $R$-modules $(H \oplus H') \oplus H'' \to H \oplus (H' \oplus H'')$ defines an isomorphism
$$
A_{\mathcal{H},\mathcal{H'},\mathcal{H''}}~ \colon  \big( \mathcal{H} \boxtimes \mathcal{H'}\big) \boxtimes \mathcal{H''}
\longrightarrow  \mathcal{H} \boxtimes \big( \mathcal{H'}  \boxtimes \mathcal{H''} \big)
$$
in the category $\pUni$.
Indeed, for all $h_1, h_2 \in H$, $h'_1, h'_2 \in H'$ and $h''_1, h''_2 \in H''$, we have 
\begin{eqnarray}
\notag &&\big( \rho\, {{}_{s}\oplus_{s'+s''}} ( \rho' {{}_{s'}\oplus_{s''}} \rho'' ) \big)(h_1+h_1'+h_1'',h_2+h'_2+h''_2)\\ 
\notag &=&  \rho(h_1, h_2) + ( \rho'  {{}_{s'}\oplus_{s''}} \rho'' ) (h'_1 + h''_1, h'_2 + h''_2) + \rho(h_1,s)\, ( \rho'  {{}_{s'}\oplus_{s''}} \rho'' )(s' + s'', h'_2 + h''_2) \\
\notag && - \rho(s,h_2)\, ( \rho'  {{}_{s'}\oplus_{s''}} \rho'' ) (h'_1 + h''_1, s' + s'') \\
\notag 	&=&  \rho(h_1, h_2) + \rho'(h'_1, h'_2) + \rho''(h''_1, h''_2) + \rho'(h'_1, s') \rho''(s'',h''_2) - \rho'(s', h'_2) \rho''(h''_1, s'') \\
\label{eq:un}	&&  + \rho(h_1, s)\, \big( \rho'(s', h'_2) + \rho''(s'', h''_2) \big) - \rho(s, h_2)\, \big( \rho'(h'_1, s') + \rho''(h''_1, s'') \big)
\end{eqnarray}
where  the last equality uses the fact that $\rho'(s', s') = \rho''(s'', s'') = 0$; on the other hand, we have
\begin{eqnarray}
\notag &&\big( (\rho\, {{}_{s}\oplus_{s'}} \rho')\, {}_{s+s'}\!\oplus_{s''}\rho''\big)(h_1+h_1'+h_1'',h_2+h'_2+h''_2)\\
\notag &=& (\rho\, {{}_{s}\oplus_{s'}} \rho') (h_1 + h'_1, h_2 + h'_2) + \rho''(h''_1, h''_2) + (\rho\, {{}_{s}\oplus_{s'}} \rho') (h_1 + h'_1, s + s')\, \rho''(s'',h''_2) \\
\notag &&\quad -  (\rho\, {{}_{s}\oplus_{s'}} \rho')(s + s', h_2 + h'_2)\, \rho''(h''_1, s'') \\
\notag &=& \rho(h_1, h_2) + \rho'(h'_1, h'_2) + \rho(h_1, s)\, \rho'(s', h'_2) - \rho(s, h_2)\, \rho'(h'_1,s') + \rho''(h''_1, h''_2)  \\
\label{eq:deux} & & + \big( \rho(h_1, s) + \rho'(h'_1, s') \big)\, \rho''(s'', h''_2) - \big( \rho(s, h_2) + \rho'(s', h'_2) \big)\, \rho''(h''_1, s'')
\end{eqnarray}
where the last equality uses the fact that $\rho(s, s) = \rho'(s', s') = 0$. Expanding  \eqref{eq:un} and \eqref{eq:deux}, we obtain that $\rho \,{{}_{s}\oplus_{s'+s''}} ( \rho' {{}_{s'}\oplus_{s''}} \rho'' ) =   (\rho {{}_{s}\oplus_{s'}} \rho')\, {}_{s+s'}\!\oplus_{s''}\rho''$ and  our claim follows. By applying the ``graph'' functor $\pUni \to \pLagr_R$, 
we obtain an isomorphism  $A_{\mathcal{H},\mathcal{H'},\mathcal{H''}}$ in $\pLagr_R$ which is  natural in $\mathcal{H},\mathcal{H'},\mathcal{H''}$.
That the associativity constraints $A$ and the unit constraints $L,R$ satisfy the coherence conditions of a monoidal category   is clear, since they are obviously satisfied in the category~$\pUni$. 
\end{proof}

\begin{remark}  \label{rem:not_strict}
Formally speaking, the  monoidal category  $\pLagr_R$ is not strict. But, since  its associativity and unit constraints arise from canonical bijections in set theory, we will assume in the sequel that  $\pLagr_R$ is strict monoidal.
\end{remark}

\begin{remark} \label{rem:monoidal}
The category $\Lagr_R$ itself has a strict monoidal structure, \bgm whose tensor product \egm 
is defined by $(H,\rho) \boxtimes (H',\rho') := (H \oplus H', \rho \oplus \rho')$ at the level of objects and by $N\boxtimes N':= N \oplus N'$ at the level of morphisms. The embedding of categories $\Lagr_R \to \pLagr_R$ that is defined by $(H,\rho) \mapsto (H,\rho,0)$ at the level of objects, and that  is the identity at the level of morphisms, is \bgm a strict monoidal functor. \egm
\end{remark}

% ***************************************************************************
% ***************************************************************************
% ***************************************************************************

\section{Intersection forms with twisted coefficients} \label{IntersectionForm}

Let $g\geq 0$ be an integer and consider the compact,  connected, oriented surface $F_g$ of genus $g$ with one boundary component.
  In this section, we review Turaev's ``homotopy intersection pairing'' on~$F_g$,
which is a non-commutative version of  Reidemeister's ``equivariant intersection forms''.
 An  advantage of the former with respect to the latter is a simpler definition, avoiding the use of covering spaces
and allowing for a straightforward verification of the main properties.

% ******************************************************************************************
% ******************************************************************************************

\subsection{The homotopy intersection pairing $\gl$}

Set $\pi := \pi_1(F_g,\star)$ where $\star \in \partial F_g$. We denote by $\ZZ[\pi]$ the group ring of $\pi$ and we denote by $a \mapsto \overline{a}$  the {\itshape antipode} of $\ZZ[\pi]$, which is the anti-homomorphism of rings defined by  $\overline{x} = x^{-1}$ for any $x\in \pi$. The {\itshape homotopy intersection pairing} of $F_g$ is the pairing 
$$
\gl~ \colon \ZZ[\pi] \times \ZZ[\pi] \longrightarrow \ZZ[\pi]
$$ 
introduced by Turaev in \cite{Tu78}. Recall that the map $\gl$ is $\ZZ$-bilinear and  $\gl(a,b) \in \ZZ[\pi]$ is defined as follows  for any $a,b\in \pi$. 
Let~$\nu$ be the oriented boundary curve of $F_g$. Let $l, r\in \partial  F_g$ be some additional points  such that $l < \star < r$ along $\nu$. Given an oriented path $\gamma$ in $F_g$  and two simple points $p<q$ along $\gamma$, we denote by $\gamma_{pq}$  the arc in $\gamma$ connecting $p$ to $q$, while the same arc with the opposite orientation is denoted by $\overline{\gamma}_{qp}$. Let $\alpha$ be a loop based at $l$ such that $\overline{\nu}_{\star l} \alpha \nu_{l \star}$ represents $a$ and let $\beta$ be a loop based at $r$ such that ${\nu}_{\star r} \beta \overline{\nu}_{r \star}$ represents $b$; we assume that these loops  are in transverse position and that $\alpha\cap \beta$ only consists of simple points of $\alpha$ and $\beta$:
$$
\labellist
\scriptsize\hair 2pt
 \pinlabel {$l$} [t] at 172 10
 \pinlabel {$r$} [t] at 538 10
 \pinlabel {$\alpha$} [tr] at 96 69
 \pinlabel {$\beta$} [tr] at 459 80
 \pinlabel {$p$} [b] at 347 187
 \pinlabel {$F_g$}  at 35 378
 \pinlabel {$\star$} at 355 -9
 \pinlabel {$\circlearrowleft$}  at 661 46
 \pinlabel {$\nu$} [t] at 663 3
\endlabellist
\centering
\includegraphics[height=3cm,width=6cm]{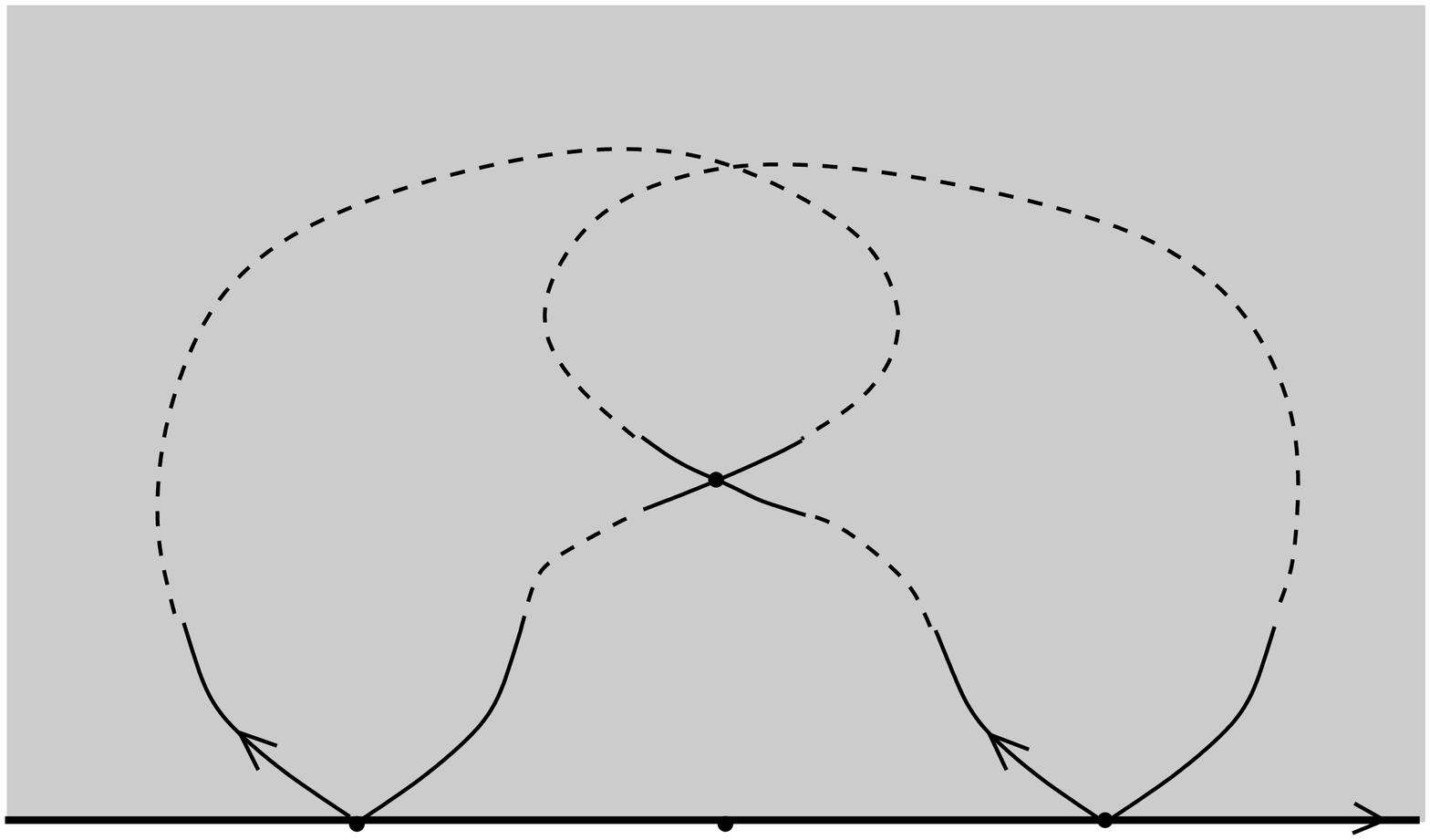}
$$
Then
\begin{equation} \label{eq:def-eta}
\gl(a,b) := \sum_{p \in \alpha \cap \beta} \varepsilon_p(\alpha,\beta)\, \overline{\nu}_{\star l} \alpha_{l p} \overline{\beta}_{pr} \overline{\nu}_{r \star}
\end{equation}

where the sign $\varepsilon_p(\alpha,\beta)=\pm 1$ is equal to $+1$ if, and only if, a unit tangent vector of $\alpha$ followed by a unit tangent vector of $\beta$ gives a positively-oriented frame of the oriented surface $F_g$. The pairing $\gl$ is   implicit in  Papakyriakopoulos' work \cite{Pa75}; see also \cite{Pe06}.

The pairing $\lambda$ has the following properties. First of all, $\gl(\ZZ1,\ZZ[\pi]) = \gl(\ZZ[\pi],\ZZ1)=0$: hence $\gl$ is determined by its restriction to the augmentation ideal $I(\ZZ[\pi])$ of the group ring $\ZZ[\pi]$. 
It is easily checked that  the restriction $\lambda: I(\ZZ[\pi]) \times I(\ZZ[\pi]) \to \ZZ[\pi]$  satisfies
\begin{align}
\label{eq:left} \gl( ax + a'x',y ) &= a \gl( x,y) +  a' \gl( x' , y), \\	
\label{eq:right}	\gl(y, ax + a'x' ) &=  \gl( y,x ) \overline{a} + \gl( y,x' ) \overline{a'}, \\
\label{eq:skew}	\gl(x, y )  & = - \overline{\gl(y, x )} + x \, \overline{y}, \\
\label{eq:boundary_curve}  \gl(x, \overline{\nu} -1) & = -x 
\end{align} 
for any $a,a' \in \ZZ[\pi]$ and $x,x',y \in I(\ZZ[\pi])$.  
The first three properties are reformulations of  \cite[(3),(4),(5)]{Tu78}, and the last one is observed in \cite[Theorem I.(i)]{Tu78}.
Note that, since $\pi$ is a free group of rank $2g$, $I(\ZZ[\pi])$ is a free  left $\ZZ[\pi]$-module of rank $2g$.  
 Furthermore, $I(\ZZ[\pi])$ can be identified with $H_1(F_g,\star \, ; \, \ZZ[\pi])$ \bgm  since it corresponds  to  the image of the  connecting homomorphism  
$$
\partial_*~ \colon H_1(F_g,\star \, ; \, \ZZ[\pi]) \longrightarrow H_0(\star; \ZZ[\pi]),
$$ 
in the long exact sequence of the pair $(F_g,\star)$, through the  isomorphism $H_0(\star; \ZZ[\pi]) \simeq \ZZ[\pi]$.\egm

% ***************************************************************************
% ***************************************************************************

\subsection{The twisted intersection form $\SSempty$}

Assume now the following:
\begin{equation} \label{eq:R_and_G}
	\left\{  \begin{array}{l} 
		\hbox{$R$ is a commutative ring without  zero-divisors  such that $2\neq 0 \in R$;} \\
		\hbox{$G \subset R^\times$ is a multiplicative subgroup of the group of units of $R$;}\\
		\hbox{$R$  has an involutive ring endomorphism $r \mapsto \overline{r}$ satisfying $\overline{x}=x^{-1}$ for all $x\in G$.}
	\end{array}\right.
\end{equation}
Any group homomorphism $\gvf~ \colon H_1(F_g) \to G$ induces \bgm a group homomorphism $ \pi \to G$, 
which extends to a ring homomorphism  $\gvf~ \colon \ZZ[\pi] \to R$ and \egm  gives $R$ the structure of a right $\ZZ[\pi]$-module. 
Thus we can consider the twisted homology group  $H^{\gvf}_1 (F_g,\star)$ which, as an $R$-module, can be identified~with
$$
R \otimes_{\ZZ[\pi]} H_1(F_g,\star \, ; \, \ZZ[\pi]) \simeq R \otimes_{\ZZ[\pi]} I(\ZZ[\pi]).
$$ 
Using this identification, we define a pairing  $\formempty~ \colon  H^{\gvf}_1 (F_g,\star) \times H^{\gvf}_1 (F_g,\star) \to R $ by setting
\begin{equation}   \label{eq:<-,->}
\forall r,r' \in R, \ \forall x,x'\in I(\ZZ[\pi]), \quad
\form{r \otimes x}{r' \otimes x'} := r \overline{r'}\, \varphi\big(\gl( x, x' )\big).
\end{equation}
By \eqref{eq:left} and \eqref{eq:right},  this pairing is well-defined and sesquilinear; but it is not quite skew-symmetric since \eqref{eq:skew} implies that $ \form{x}{y} = - \overline{\form{y}{x}} + \dep_{*}(x)\, \overline{\dep_{*}(y)}$ for any $x,y \in H^{\gvf}_1 (F_g,\star)$, where $\dep_{*}~\colon H^{\gvf}_1 (F_g,\star) \to R$ is the connecting homomorphism in the long exact sequence of the pair $(F_g,\star)$. 
Therefore, we will prefer to  $\formempty $ the skew-Hermitian form 
$$ 
\SSempty~ \colon H^{\gvf}_1 (F_g,\star) \times H^{\gvf}_1 (F_g,\star) \longrightarrow R 
$$
defined by 
\begin{equation} \label{eq:form_s}
\SSform{x}{y} := 2 \form{x}{y} - \dep_{*}(x) \, \overline{\dep_{*}(y)}
\end{equation}
and which will be referred to  as the {\itshape $\gvf$-twisted intersection form} of $F_g$. The following lemma, which gives a matrix presentation of $\SSempty$, is obtained by straightforward computations.

\begin{figure}[h]
	\centering
	\includegraphics[scale=0.45]{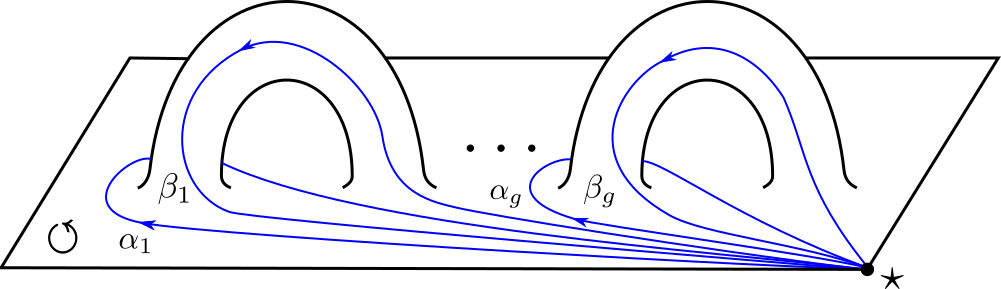}
	\caption{The system of meridians and parallels $(\alpha,\beta)$ on the surface $F_g$.}
	\label{fig:meridians_parallels}
\end{figure}

\begin{lemma} \label{lem:matrix_S}
Let $(\alpha,\beta):= (\alpha_1,\dots,\alpha_g,\beta_1,\dots, \beta_g)$ be a \lq\lq system of meridians and parallels\rq\rq{} in the oriented surface $F_g$ as shown in Figure \ref{fig:meridians_parallels}, and let $(a^\gvf,b^\gvf):= (a_1^\gvf, \dots, a_g^\gvf, b_1^\gvf,\dots, b_g^\gvf)$ be the basis of the free $R$-module $H_1^\gvf(F_g,\star)$ defined by \bgm the homology classes \egm
\begin{equation} \label{eq:a_b}
\forall i=1,\dots, g, \quad a_i^{\gvf} := \big[1 \otimes \widehat \alpha_i  \big], \ b_i^{\gvf} := \big[1 \otimes \widehat \beta_i  \big].
\end{equation}
The matrix of $\SSempty$ in the basis $(a^\gvf,b^\gvf)$ is the skew-Hermitian matrix of size $2g$
\begin{equation}  \label{eq:matrix}
	S^{\gvf} := \left(
	\begin{array}{c | c}
		S_{aa}^{\gvf} & S_{ab}^{\gvf} \\ [1pt] \hline \\ [-10pt]
		S_{ba}^{\gvf} & {S_{bb}^{\gvf}}		
	\end{array}
	\right)
\end{equation}
where 
$$
\hbox{\scriptsize	$S_{aa}^{\gvf} := \left(
	\begin{array}{ccccc}
	\overline{\gvf(\ga_1)}-\gvf(\ga_1) & -P^{\gvf}(\ga_1,\ga_2) & \cdots  & \cdots & -P^{\gvf}(\ga_1,\ga_g)\\[1ex]
	P^{\gvf}(\ga_2,\ga_1) &  \overline{\gvf(\ga_2)}-\gvf(\ga_2) & \cdots  & \cdots & -P^{\gvf}(\ga_2,\ga_g)\\[1ex]
	\vdots                &  \vdots                             & \ddots  &        & \vdots\\[1ex]
	\vdots                &  \vdots                &            & \overline{\gvf(\ga_{g-1})}-\gvf(\ga_{g-1}) & -P^{\gvf}(\ga_{g-1},\ga_g)\\[1ex]
	P^{\gvf}(\ga_g,\ga_1) &  P^{\gvf}(\ga_g,\ga_2) & \cdots     & P^{\gvf}(\ga_g,\ga_{g-1}) & \overline{\gvf(\ga_g)}-\gvf(\ga_g)
	\end{array}
	\right)$}
$$  
$$
\hbox{\scriptsize	$S_{ab}^{\gvf} := \left(
	\begin{array}{ccccc}
	Q^{\gvf}(\ga_1,\gb_1) & -P^{\gvf}(\ga_1,\gb_2) &  \cdots & \cdots                        & -P^{\gvf}(\ga_1,\gb_g)\\[1ex]
	P^{\gvf}(\ga_2,\gb_1) &  Q^{\gvf}(\ga_2,\gb_2) &  \cdots & \cdots                        & -P^{\gvf}(\ga_2,\gb_g)\\[1ex]
	\vdots                &  \vdots                &  \ddots &                               & \vdots\\[1ex]
	\vdots			      &  \vdots                &         & Q^{\gvf}(\ga_{g-1},\gb_{g-1}) & -P^{\gvf}(\ga_{g-1},\gb_g)\\[1ex]
	P^{\gvf}(\ga_g,\gb_1) &  P^{\gvf}(\ga_g,\gb_2) &  \cdots & P^{\gvf}(\ga_g,\gb_{g-1})     & Q^{\gvf}(\ga_g,\gb_g)
	\end{array}
	\right)  \bgm =  - \big(\overline{S_{ba}^{\gvf}}\big)^t  \egm $}   
$$
%    $$
%    \hbox{\scriptsize	$S_{ba}^{\gvf} := \left(
%    	\begin{array}{ccccc}
%    	- Q^{\gvf}(\gb_1,\ga_1) & -P^{\gvf}(\gb_1,\ga_2) &  \cdots & \cdots                       & -P^{\gvf}(\gb_1,\ga_g)\\[1ex]
%    	P^{\gvf}(\gb_2,\ga_1)   &  -Q^{\gvf}(\gb_2,\ga_2)&  \cdots & \cdots                         & -P^{\gvf}(\gb_2,\ga_g)\\[1ex]
%    	\vdots                  &  \vdots                &  \ddots &                                & \vdots\\[1ex]
%    	\vdots			        &  \vdots                &         & -Q^{\gvf}(\gb_{g-1},\ga_{g-1}) & -P^{\gvf}(\gb_{g-1},\ga_g)\\[1ex]
%    	P^{\gvf}(\gb_g,\ga_1)   &  P^{\gvf}(\gb_g,\ga_2) &  \cdots & P^{\gvf}(\gb_g,\ga_{g-1})      & -Q^{\gvf}(\gb_g,\ga_g)
%    	\end{array}
%    	\right)$}
%    $$
$$
\hbox{\scriptsize	$S_{bb}^{\gvf} := \left(
	\begin{array}{ccccc}
	\gvf(\gb_1)-\overline{\gvf(\gb_1)} & -P^{\gvf}(\gb_1,\gb_2) & \cdots  & \cdots & -P^{\gvf}(\gb_1,\gb_g)\\[1ex]
	P^{\gvf}(\gb_2,\gb_1) &  \gvf(\gb_2)-\overline{\gvf(\gb_2)} & \cdots  & \cdots & -P^{\gvf}(\gb_2,\gb_g)\\[1ex]
	\vdots                &  \vdots                             & \ddots  &        & \vdots\\[1ex]
	\vdots                &  \vdots                &            & \gvf(\gb_{g-1})-\overline{\gvf(\gb_{g-1})} & -P^{\gvf}(\gb_{g-1},\gb_g)\\[1ex]
	P^{\gvf}(\gb_g,\gb_1) &  P^{\gvf}(\gb_g,\gb_2) & \cdots     & P^{\gvf}(\gb_g,\gb_{g-1}) & \gvf(\gb_g)-\overline{\gvf(\gb_g)}
	\end{array}
	\right)$}
$$
with\,  $P^{\gvf}(x,y) := (1 - \gvf(x)) (1 - \overline{\gvf(y)})$,\, $Q^\gvf(x,y) := (\gvf(x)+1)(\overline{\gvf(y)} + 1) - 2$ \, for any $x,y\in H_1(F_g)$.
\end{lemma}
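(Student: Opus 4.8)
The plan is to compute $\SSform{\cdot}{\cdot}$ on all pairs of basis elements $a_i^\gvf, b_j^\gvf$ directly from the defining formula \eqref{eq:form_s}, which reduces everything to evaluating $\gvf(\gl(x,y))$ and $\dep_*(x)$ for $x,y$ ranging over the classes $1\otimes\widehat\ga_i$ and $1\otimes\widehat\gb_j$. For the connecting homomorphism, one has $\dep_*(1\otimes \widehat\ga_i) = \gvf(\ga_i) - 1$ and $\dep_*(1\otimes\widehat\gb_j)=\gvf(\gb_j)-1$, since $\widehat\ga_i$ (resp. $\widehat\gb_j$) lifts the loop $\ga_i$ (resp. $\gb_j$) starting at $\widehat\star$, so its boundary in $H_0$ is the difference of the two endpoints, which after tensoring with $R$ over $\ZZ[\pi]$ becomes $\gvf(\ga_i)-1$. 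This already accounts for the $-\dep_*(x)\overline{\dep_*(y)}$ correction term in \eqref{eq:form_s}, and in particular explains why the diagonal entries of $S_{aa}^\gvf$ and $S_{bb}^\gvf$ have the asymmetric form $\overline{\gvf(\ga_i)}-\gvf(\ga_i)$: indeed $2\form{x}{x} = \form{x}{x}+\form{x}{x} = \form{x}{x} - \overline{\form{x}{x}} + \dep_*(x)\overline{\dep_*(x)}$ by \eqref{eq:skew}, so $\SSform{x}{x} = \form{x}{x}-\overline{\form{x}{x}}$, and one is left to identify $\gvf(\gl(\ga_i,\ga_i))$ up to the antipode.

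Next I would evaluate $\gl$ on the generators using \eqref{eq:left}--\eqref{eq:boundary_curve} together with the standard geometric picture of the system of meridians and parallels of Figure \ref{fig:meridians_parallels}. Here it is cleanest to first record $\gl$ on the free generators $\ga_i, \gb_j$ of $\pi$ via \eqref{eq:def-eta}: geometrically $\ga_i$ meets $\gb_i$ in one point and is disjoint from $\ga_j$ ($j\neq i$), $\gb_j$ ($j\neq i$) when the curves are put in the standard position, but because $\gl$ is a \emph{homotopy} intersection pairing one must use the based loops $\overline\nu_{\star l}\alpha\nu_{l\star}$ and $\nu_{\star r}\beta\overline\nu_{r\star}$ and keep track of the resulting $\ZZ[\pi]$-coefficients; the off-diagonal disjointness up to homotopy must be handled by an explicit isotopy of the arcs past the base point, which produces the ``$1-\gvf(x)$''-type factors after applying $\gvf$. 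Then I would pass from the generators of $\pi$ to the $\ZZ[\pi]$-module generators $\ga_i-1, \gb_j-1$ of $I(\ZZ[\pi])$ — but in fact \eqref{eq:<-,->} only needs $\gl$ on the group elements themselves, extended $\ZZ$-bilinearly, with the proviso from \eqref{eq:left}--\eqref{eq:right} that $\gl$ descends to $I(\ZZ[\pi])\times I(\ZZ[\pi])$. Expanding $\gl((\ga_i-1)\text{ or }(\gb_i-1), (\ga_j-1)\text{ or }(\gb_j-1))$ by bilinearity and applying $\gvf$ then yields the polynomials $P^\gvf(x,y)=(1-\gvf(x))(1-\overline{\gvf(y)})$ and $Q^\gvf(x,y)=(\gvf(x)+1)(\overline{\gvf(y)}+1)-2$, once one also folds in the doubling and the $\dep_*$-correction of \eqref{eq:form_s}. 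The skew-Hermitian relation $S_{ab}^\gvf = -(\overline{S_{ba}^\gvf})^t$ is then automatic from the skew-symmetry of $\SSempty$, which is guaranteed by \eqref{eq:skew} and the definition \eqref{eq:form_s} (the correction term is designed precisely to kill the anomaly $\dep_*(x)\overline{\dep_*(y)}$).

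The main obstacle is the bookkeeping in the geometric computation of $\gl$ on the meridian--parallel system: one must choose concrete representatives $\alpha, \beta$ based at $l$ and $r$ respectively, put $\ga_i$ and $\gb_j$ in transverse position, enumerate the intersection points with their signs $\varepsilon_p$, and correctly read off the words $\overline\nu_{\star l}\alpha_{lp}\overline\beta_{pr}\overline\nu_{r\star}$ in $\pi$ — the subtlety being that an arc which geometrically looks disjoint from another may still contribute because the \emph{based} representatives forced to pass near $l, \star, r$ create intersection points, and the ordering $l<\star<r$ along $\nu$ together with the orientation conventions determines the signs and the precise $\ZZ[\pi]$-words. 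Since the statement asserts this is ``obtained by straightforward computations,'' I would present the computation in the cases $(\ga_i,\ga_i)$, $(\ga_i,\gb_i)$, $(\ga_i,\ga_j)$ with $i<j$, $(\ga_i,\gb_j)$ with $i\neq j$, and $(\gb_i,\gb_j)$, noting that the remaining cases follow by skew-symmetry, and then simply assemble the matrix; I would not belabour the isotopy arguments beyond indicating the relevant picture and the one nontrivial move (sliding an arc across the base point), trusting \eqref{eq:left}--\eqref{eq:boundary_curve} to do the algebraic heavy lifting. One should double-check consistency by verifying that in the untwisted case $\gvf\equiv 1$ one recovers (twice) the ordinary intersection matrix $\left(\begin{smallmatrix} 0 & I_g \\ -I_g & 0\end{smallmatrix}\right)$, since then $P^\gvf = 0$ and $Q^\gvf = 2$, and that $S^\gvf$ is genuinely skew-Hermitian, i.e. $\overline{(S^\gvf)}^t = -S^\gvf$.
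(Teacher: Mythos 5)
Your proposal is correct and follows the same route the paper implicitly intends; the paper itself gives no proof, merely stating that the lemma ``is obtained by straightforward computations,'' and your plan (compute $\dep_*$ on the generators, compute $\gl$ geometrically, then assemble via \eqref{eq:form_s}) is the natural way to carry those computations out, with good sanity checks at the end. One small clarification that streamlines the bookkeeping you flag as the main obstacle: once you note that $\dep_*(a_i^\gvf)=\gvf(\ga_i)-1$, the correction term $-\dep_*(x)\overline{\dep_*(y)}$ in \eqref{eq:form_s} already \emph{equals} $-P^\gvf(\ga_i,\ga_j)$, so the off-diagonal entries in the upper triangle of $S^\gvf_{aa}$, $S^\gvf_{ab}$, $S^\gvf_{bb}$ require only that $\gvf(\gl(\cdot,\cdot))$ vanish there (which is the disjointness statement for the based representatives with $l<\star<r$); the lower triangle is then forced by the skew-symmetry relation \eqref{eq:skew}, whose ``anomaly'' term $x\,\overline{y}$ produces the $+P^\gvf$ entries, and the only genuinely nontrivial $\gl$-evaluation is $\gvf(\gl(\ga_i,\gb_i))=\gvf(\ga_i)\overline{\gvf(\gb_i)}$ on the diagonal of $S^\gvf_{ab}$. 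Organizing the computation this way makes it clear that almost all of the $P^\gvf$ factors come from the $\dep_*$-correction and from \eqref{eq:skew}, not from isotoping arcs, which is a cleaner way to see that the computation really is ``straightforward.''
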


It can be verified that the determinant of the matrix $S^{\gvf}$ given by Lemma \ref{lem:matrix_S} is $4^g$: therefore the form $\SSempty$  is non-degenerate. Moreover,  \eqref{eq:boundary_curve} shows that 
\begin{equation} \label{eq:boundary_curve_bis} 
\forall x \in H^{\gvf}_1 (F_g,\star), \quad \SSform{x}{\nu} = 2\, \partial_*(x).
\end{equation}
 (Here, by a slight abuse of notation, we  simply denote by $\nu\in H^{\gvf}_1 (F_g,\star)$ the homology class $[1 \otimes \widehat{\nu}]$, 
where $\widehat{\nu} \subset \widehat{F}_g$ is the lift of the oriented curve $\nu$ to the maximal abelian cover that starts at the preferred lift $\widehat{\star}$ of the base-point $\star$.)
\bgm 
Let $Q(R)$ be the field of fractions of $R$ and set
$$
\nu/2 := (1/2) \otimes \nu\  \in Q(R) \otimes_R H^{\gvf}_1 (F_g,\star).
$$
It  follows from \eqref{eq:boundary_curve_bis} that  the triple \egm
$\big(H^{\gvf}_1 (F_g,\star), \SSform{\cdot}{\cdot},\nu/2\big) $
is a pointed skew-Hermitian $R$-module in the sense of Section \ref{pLagr}.

% ***************************************************************************
% ***************************************************************************

\subsection{The equivariant intersection form $S$}

We now recall  a few facts about  Reidemeister's equivariant intersection forms \cite{Re39}. Let $N$ be a piecewise-linear  compact connected oriented $n$-manifold, and let  $J,J'$ be two disjoint subsets of $\partial N$. (We possibly have $J=\varnothing$ or $J'=\varnothing$, or even $\partial N=\varnothing$.) We assume that $N$ is endowed with a triangulation $T$, such that $J$ is a subcomplex of $T$ and $J'$ is a subcomplex of the dual cellular decomposition $T^*$. 

Fix a ring $R$ and a multiplicative subgroup $G \subset R^\times$ as in \eqref{eq:R_and_G}, and let   $\gvf~ \colon H_1(N) \rightarrow G$  be a group homomorphism. The {\itshape  equivariant intersection form of $N$} (with coefficients in $R$ twisted by $\gvf$, relative to $J \sqcup J'$) is the sesquilinear map 
$$
S_N:=S_{N,\varphi,J \sqcup J'}~ \colon H^{\gvf}_q(N,J) \times H^{\gvf}_{n - q}(N, J') \longrightarrow R 
$$ 
defined for any $q\in \{0,\dots,n\}$ by 
$$ S_N\Big(\Big[\sum_{i} r_i \otimes x_i\Big], \Big[\sum_{i'}\, r'_{i'}\, \otimes x'_{i'}\Big]\Big) := 
\sum_{i,i'}\sum\limits_{h \in H_1(N)}  (hx_i \bullet  x'_{i'}) \,  \varphi(h^{-1})\, r_i\, \overline{r'_{i'}} $$
where $\sum_{i} r_i \otimes x_i \in C^{\gvf}_q(T,J) $ and $\sum_{i'}r'_{i'} \otimes x'_{i'} \in C^{\gvf}_{n-q}(T^*,J')$ are arbitrary cellular cycles. Here $r_i,r_{i'}'$ are elements of $R$, $x_i \subset \widehat{N}$ is a lift of an oriented $q$-simplex of $T$ not included in $J$,  
\bgm $h x_i$ is the image of $x_i$ under the deck transformation of $\widehat{N}$ corresponding to $h\in H_1(N)$, \egm
$x'_{i'} \subset \widehat{N}$ is a lift of an oriented $(n-q)$-cell of $T^*$ not included in $J'$, and  $(hx_i \bullet  x'_{i'})\in \{-1,0,+1\}$ denotes the  intersection number. We recall Blanchfield's duality theorem in the following form, which is adapted to our setting and can be easily deduced from  \cite[Theorem 2.6]{Bl57}.

\begin{theorem}[Blanchfield] \label{th:Blanchfield}
The left and right annihilators of $S_N$ are the torsion submodules of $H^{\gvf}_q(N,J)$ and $H^{\gvf}_{n - q}(N, J')$, respectively.
\end{theorem}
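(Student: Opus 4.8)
The plan is to identify $S_N$, at the chain level, with the Poincar\'e--Lefschetz duality pairing between the complexes $C^{\gvf}(T,J)$ and $C^{\gvf}(T^*,J')$, and then to read off the radicals. First observe that it suffices to treat the left annihilator: the combinatorial identity $(hx_i\bullet x'_{i'})=(-1)^{q(n-q)}(h^{-1}x'_{i'}\bullet x_i)$, together with the substitution $h\mapsto h^{-1}$ and the relations $\overline{\gvf(h)}=\gvf(h)^{-1}$, yields the (anti)symmetry
$$
S_{N,\gvf,J\sqcup J'}(x,y)=(-1)^{q(n-q)}\,\overline{S_{N,\gvf,J'\sqcup J}(y,x)},
$$
so the right annihilator of $S_{N,\gvf,J\sqcup J'}$ is the image under the involution of the left annihilator of $S_{N,\gvf,J'\sqcup J}$; since $r\mapsto\overline r$ is a ring automorphism it preserves torsion submodules, so the statement for left annihilators gives the one for right annihilators. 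Next, the inclusion $\Tors H^{\gvf}_q(N,J)\subseteq(\text{left annihilator of }S_N)$ is immediate: if $rx=0$ with $r\in R\setminus\{0\}$, then $r\,S_N(x,y)=S_N(rx,y)=0$ forces $S_N(x,y)=0$ because $R$ has no zero-divisors.

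The core of the argument is that the numbers $(hx_i\bullet x'_{i'})$ are exactly the structure constants of the cap product with the fundamental class $[\widehat N]\in H_n\big(\widehat N,\partial\widehat N\big)$, so the adjoint of $S_N$ factors as
$$
H^{\gvf}_q(N,J)\ \xrightarrow{\ \cap[\widehat N]\ }\ H^{n-q}_{\overline{\gvf}}(N,J')\ \xrightarrow{\ \mathrm{ev}\ }\ \overline{\Hom_R\!\big(H^{\gvf}_{n-q}(N,J'),R\big)},
$$
where the first map is the Lefschetz duality isomorphism and $\mathrm{ev}$ is the evaluation map. Hence the left annihilator of $S_N$ is precisely the kernel of $\mathrm{ev}$. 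This is the content of Blanchfield's Theorem~2.6 in \cite{Bl57} adapted to the present coefficient system; I would either quote it directly or re-derive it from the chain-level duality, taking care to track the conjugation $\overline{\gvf}$ coming from sesquilinearity.

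It remains to prove that the left annihilator is contained in $\Tors H^{\gvf}_q(N,J)$. For this I would base-change along the inclusion $R\hookrightarrow Q:=Q(R)$. Since $Q$ is flat over $R$, tensoring commutes with homology, so $H^{\gvf}_*(N,-)\otimes_R Q\cong H^{\gvf_Q}_*(N,-;Q)$ and $S_N$ becomes, after tensoring, the corresponding equivariant intersection form over the field $Q$. Over a field there is no torsion, and Lefschetz duality combined with the universal coefficient isomorphism (valid over a field) shows that $\mathrm{ev}$ is an isomorphism; hence the $Q$-form is non-degenerate. Consequently the left annihilator of $S_N$ maps to $0$ in $H^{\gvf}_q(N,J)\otimes_R Q$, and the kernel of $H^{\gvf}_q(N,J)\to H^{\gvf}_q(N,J)\otimes_R Q$ equals $\Tors H^{\gvf}_q(N,J)$ because $R$ is a domain. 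Together with the trivial inclusion this gives equality, and the theorem follows. The main obstacle I anticipate is purely bookkeeping: verifying that the combinatorial pairing on $T$- versus $T^*$-cells genuinely implements the Lefschetz duality isomorphism and keeping the conjugations and the sign $(-1)^{q(n-q)}$ straight, so that the left and right radicals land on the correct torsion submodules; the rest is formal once Blanchfield's theorem is invoked.
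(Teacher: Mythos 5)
The paper does not prove this theorem at all: it is stated as a recollection, justified only by the citation ``can be easily deduced from [Theorem 2.6][Bl57]''. There is therefore no proof in the paper to compare against; your proposal supplies an actual derivation, and it is essentially correct. The three ingredients you use are exactly what one would expect: the trivial inclusion $\Tors \subseteq \operatorname{Ann}$ from $R$ being a domain; the symmetry $S_{N,\gvf,J\sqcup J'}(x,y)=(-1)^{q(n-q)}\,\overline{S_{N,\gvf,J'\sqcup J}(y,x)}$ (which does check out, using that deck transformations preserve intersection numbers, and that the ring involution commutes with torsion) to reduce the right annihilator to the left one; and the base change to $Q=Q(R)$ to get $\operatorname{Ann}\subseteq\Tors$, using that over a field the universal coefficient map and twisted Lefschetz duality are isomorphisms, so the $Q$-form is non-degenerate, and that $\ker\big(H^{\gvf}_q(N,J)\to Q\otimes_R H^{\gvf}_q(N,J)\big)$ is precisely the torsion submodule.

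Two small bookkeeping remarks. First, in your factorization of the adjoint of $S_N$, the cap product $\cap[\widehat N]$ is conventionally a map from cohomology to homology, so the first arrow as written is the \emph{inverse} of that; what is really at play at the chain level is the identification of $T$-cells with dual $T^{*}$-cells, which is precisely what the coefficients $(hx_i\bullet x'_{i'})$ encode, so the content is right even if the labelling is slightly off. Second, the twisted Lefschetz duality itself (for compact oriented $N$ with $J$ a subcomplex of $T$ and $J'$ of $T^{*}$) is still Blanchfield's contribution and must be invoked; but since the paper itself treats that as a black box, relying on it is no loss. In short: the paper cites and you derive; your derivation is sound modulo these minor points.
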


Assume now that $N:=F_g$. Let $l, r\in \partial F_g$ be some points such that  $l < \star < r$ if we follow $\partial F_g$ in the positive direction. Then the  arc joining $l$ to $\star$ in $\partial F_g \setminus \{r\}$ induces an isomorphism $H^{\gvf}_1(F_g,\star)  {\simeq} H^{\gvf}_1(F_g,l)$ and, similarly,  the  arc joining $\star$ to $r$ in $\partial F_g \setminus \{l\}$ induces  an  isomorphism $H^{\gvf}_1(F_g,\star) {\simeq} H^{\gvf}_1(F_g,r)$. It is easily deduced from the definitions  that the diagram 
\begin{equation} \label{com:forms}
\xymatrix @!0 @R=1.2cm @C=4cm {
H^{\gvf}_1(F_g,l ) \times H^{\gvf}_{1}(F_g, r) \ar[r]^-{S}  & R \ar@{=}[d] \\
H^{\gvf}_1(F_g,\star) \times H^{\gvf}_{1}(F_g, \star) \ar[r]_-{\formempty}   \ar[u]_-\simeq & R
} 
\end{equation}
is commutative, where we denote $S:= S_{F_g,\varphi,\{l\}\sqcup \{r\}}$  and    $\formempty$ is the pairing  defined by \eqref{eq:<-,->}.
Since the form $S$ is non-degenerate by Theorem \ref{th:Blanchfield}, \bgm we \egm recover from  \eqref{eq:form_s} and \eqref{eq:boundary_curve_bis} the fact that $\SSempty$ is non-degenerate.

% ***************************************************************************
% ***************************************************************************
% ***************************************************************************

\section{The Magnus functor} \label{FunctorFG}

In this section, we construct the Magnus functor and prove some of its properties. We fix a ring~$R$ and a multiplicative subgroup  $G \subset R^\times$  as in \eqref{eq:R_and_G}.

% ******************************************************************************************
% ******************************************************************************************

\subsection{The functor $\Mag$} \label{MainTheorem}

Here is the main result of this section.

\begin{theorem} \label{th:Magnus}
There is a functor $\Mag := \Mag_{R,G}~ \colon \Cob{}_G \rightarrow \pLagr_R$ defined by
$$
\Mag (g, \gvf) :=  \big(H^{\gvf}_1 (F_g,\star), \SSempty, \nu/2 \big)
$$
for any object $(g, \gvf)$ of $\Cob_G$, and by 
$$ 
\Mag (M, \gvf)  := \cl \big( \ker \big((-m_-)\oplus m_+~ \colon H^{\gvf_-}_1 ( F_{g_{-}}, \star ) \oplus H^{\gvf_+}_1 ( F_{g_{+}}, \star ) \rightarrow  H^{\gvf}_1( M, \, \star )\big) \big)
$$
for any morphism $(M, \gvf)~ \colon ( g_{-}, \gvf_{-} ) \to ( g_{+}, \gvf_{+} )$ in $\Cob_G$.
\end{theorem}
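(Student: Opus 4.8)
The plan is to verify three things: (1) for each object $(g,\gvf)$ of $\Cob_G$, the triple $\Mag(g,\gvf)$ is indeed a pointed skew-Hermitian $R$-module; (2) for each morphism $(M,\gvf)\colon (g_-,\gvf_-)\to(g_+,\gvf_+)$, the submodule $\Mag(M,\gvf)$ is a pointed Lagrangian relation from $\Mag(g_-,\gvf_-)$ to $\Mag(g_+,\gvf_+)$; and (3) $\Mag$ respects identities and composition. Point (1) is already essentially done in Section~\ref{IntersectionForm}: non-degeneracy of $\SSempty$ follows from Lemma~\ref{lem:matrix_S} (the determinant computation) or from Blanchfield duality (Theorem~\ref{th:Blanchfield}) via diagram~\eqref{com:forms}, and the properties $\SSform{\nu/2}{\nu/2}=0$ and $\SSform{\nu/2}{H}\subset R$ follow from~\eqref{eq:boundary_curve_bis}, so I would simply cite these.

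For point (2), write $N:=\Mag(M,\gvf)$ and $\mathsf{m}:=(-m_-)\oplus m_+\colon H_1^{\gvf_-}(F_{g_-},\star)\oplus H_1^{\gvf_+}(F_{g_+},\star)\to H_1^{\gvf}(M,\star)$. First I would check the ``pointed'' condition: since $m_\pm$ send the boundary curve $\nu$ of $F_{g_\pm}$ to the \emph{same} loop $\partial M$ (up to the identification via $m$), we have $m_-(\nu)=m_+(\nu)$ in $H_1^\gvf(M,\star)$, hence $\mathsf{m}(\nu/2,\nu/2)=0$ after rationalizing, so $(\nu/2,\nu/2)\in\ker(\mathsf{m}_Q)=N_Q$. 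The heart of the matter is that $\cl(\ker \mathsf{m})$ is Lagrangian for $(-\SSempty)\oplus\SSempty$. The inclusion $\cl(\ker\mathsf{m})\subset\Ann(\cl(\ker\mathsf{m}))$ (isotropy) I expect to follow from naturality of equivariant intersection forms: using the half-lives-half-dies philosophy and Blanchfield duality (Theorem~\ref{th:Blanchfield}) applied to the pair $(M,\partial M)$ together with the fact that $S$ on $H_1^\gvf(F_{g_\pm},\star)$ pulls back from the equivariant intersection form of $M$ — concretely, for $x,y\in\ker\mathsf{m}$ one shows $\big((-\SSempty)\oplus\SSempty\big)(x,y)$ equals an intersection number in $M$ that vanishes because the corresponding cycles can be pushed into the interior. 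The reverse inclusion $\Ann(\cl(\ker\mathsf{m}))\subset\cl(\ker\mathsf{m})$ is the dimension/rank count: one must show $\cl(\ker\mathsf{m})$ is ``big enough'', which amounts to a rank computation $\rank_Q N_Q = g_-+g_+$, obtained from the long exact sequence of $(M,\partial M)$ with $Q$-coefficients together with half-lives-half-dies for $M$ (i.e.\ $\rank\im(H_1^\gvf(\partial M)\to H_1^\gvf(M))=\tfrac12\rank H_1^\gvf(\partial M)$, suitably adapted to the relative/based setting). This is exactly the analogue of \cite[Lemma 4.3 / Theorem 4.2]{CiTu05I} and I would follow their argument, taking care of the based version of everything; closing up ($\cl$) is what repairs the possible failure of $\ker\mathsf{m}$ itself to be a direct summand.

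For point (3), functoriality: the identity cobordism is the cylinder $F_g\times[-1,1]$, for which $m_-=m_+$ up to homotopy, so $\ker\mathsf{m}$ is the diagonal (and is already closed), matching the identity morphism of $\Lagr_R$. For composition, given $M\in\Cob_G((g_-,\gvf_-),(g_0,\gvf_0))$ and $M'\in\Cob_G((g_0,\gvf_0),(g_+,\gvf_+))$ with composite $M'\circ M=M'\cup_{F_{g_0}}M$, the Mayer--Vietoris sequence for $H_1^\bullet(M'\circ M)$ relative to the base point gives a comparison between $\ker\big((-m_-)\oplus m_+'\big)$ and the pre-composition $\Mag(M')\,\check\circ\,\Mag(M)$ of~\eqref{eq:pre-composition}; taking closures on both sides yields $\Mag(M'\circ M)=\Mag(M')\circ\Mag(M)$. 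Here one uses that an element of $H_1^{\gvf_0}(F_{g_0},\star)$ bounding on both sides assembles (via Mayer--Vietoris) to a class in $\ker$ over $M'\circ M$, and conversely; the $\cl$ on the right-hand side absorbs the torsion ambiguity coming from the connecting map in Mayer--Vietoris. Again this parallels \cite[Section 2.3, 4]{CiTu05I}.

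\textbf{Main obstacle.} The genuinely delicate step is establishing that $\cl(\ker\mathsf{m})$ is Lagrangian — both halves of it. The isotropy half requires identifying the skew-Hermitian form $\SSempty$ on the \emph{based} twisted homology $H_1^\gvf(F_g,\star)$ with (a rescaling of) Reidemeister's equivariant intersection form and checking its compatibility with inclusion into $M$; the formula~\eqref{eq:form_s} defining $\SSempty$ as $2\form{\cdot}{\cdot}-\partial_*(\cdot)\overline{\partial_*(\cdot)}$ means one has to track the correction term $\partial_*$ carefully when comparing with the absolute (Cimasoni--Turaev) picture. The rank-counting half requires a based/relative version of the half-lives-half-dies lemma for the twisted homology of $M$, which is where the hypotheses~\eqref{eq:R_and_G} on $R$ (domain, $2\neq 0$, involution) and the connectedness of all boundaries really get used. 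I would isolate this as a separate lemma before proving Theorem~\ref{th:Magnus}.
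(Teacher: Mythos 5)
Your plan follows the same overall architecture as the paper's proof, which (as you anticipate) is modeled on Cimasoni--Turaev \cite{CiTu05I}: the object-level verifications are done in Section~3, the Lagrangian property is established by comparing $\SSempty$ with the equivariant intersection form and invoking Blanchfield duality (Theorem~\ref{th:Blanchfield}), and functoriality under composition is a Mayer--Vietoris plus ker--coker argument (Lemma~\ref{FunctG}); your remarks on the pointed condition and the identity cobordism match the paper as well. There are, however, two points where the paper proceeds differently from what you sketch, and they are exactly the ``technical complications due to the twisted intersection form $\SSempty$'' that the paper flags. First, the bridge you describe informally (``identifying $\SSempty$ with a rescaling of Reidemeister's form and checking compatibility with inclusion'') is isolated as Lemma~\ref{Compatible}: if $\mu(a),\mu(b)\in H_1^\gvf(\partial M)$ then $\SSform{a}{b}=2\,S_{\partial M}(\mu(a),\mu(b))$; its proof requires an explicit description of the maximal abelian cover of $\partial M$ glued from copies of $\widehat F_{g_\pm}$, which is more work than ``naturality'' suggests. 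Second, for $\Ann(L)\subset\cl(L)$ the paper does \emph{not} do the rank count you propose; it instead derives the algebraic identity $\cl(L)=\mu^{-1}(\Ann(\mu(L)))$ from Blanchfield duality (via the auxiliary claims $\Ann(K)=\cl(K)$ and $\mu(L)=K$), and the genuinely new step beyond \cite{CiTu05I} is showing that $\mu(\Ann(L))$ lands inside $H_1^\gvf(\partial M)\subset H_1^\gvf(\partial M,\star)$ so that Lemma~\ref{Compatible} applies at all. That step is handled by pairing an element $a\in\Ann(L)$ against the boundary element $(\nu_-,\nu_+)\in L$ and using \eqref{eq:boundary_curve_bis} to get $\partial_*(a_+)=\partial_*(a_-)$. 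Your ``half-lives-half-dies'' rank count would in principle also close the argument, since a closed isotropic submodule of rank $g_-+g_+$ is automatically Lagrangian; but the relevant ranks (e.g.\ of $H_1^\gvf(\partial M,\star)_Q$) depend on whether $\gvf$ restricted to $\partial M$ is trivial, so that route needs a case distinction the paper's route avoids. In short: same skeleton, correct identification of the obstacle, but the paper's resolution via Lemma~\ref{Compatible} and the role of $\nu$ is a specific device you would need to supply.
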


This is an analogue of \cite[Theorem 3.5]{CiTu05I} and \cite[Theorem 6.1]{CiTu05I} for cobordisms. The  proof is done in the next subsection.

% ******************************************************************************************
% ******************************************************************************************

\subsection{Proof of Theorem \ref{th:Magnus}} \label{ProofMagnus} 
  
We will use the same strategy of proof as in \cite[Section 4]{CiTu05I}. It suffices to prove the following two lemmas.

\begin{lemma} \label{LagrG}
	Let $(M, \gvf) \in \Cob{}_G ( ( g_{-}, \gvf_{-} ), ( g_{+}, \gvf_{+} ))$. Then the closure of 
	$$  \ker \big(   (-m_{-}) \oplus m_{+}~ \colon H^{\gvf_{-}}_1 ( F_{g_{-}}, \star ) \oplus H^{\gvf_{+}}_1 ( F_{g_{+}}, \star ) \longrightarrow 
	H^{\gvf}_1( M, \, \star ) \big) $$
	is a Lagrangian submodule of $H^{\gvf_{-}}_1 ( F_{g_{-}}, \star ) \oplus H^{\gvf_{+}}_1 ( F_{g_{+}}, \star )$. Furthermore, it contains $(\nu_-,\nu_+)$ where $\nu_\pm:= [1 \otimes \widehat{\nu}] $ denotes the ``boundary'' element of  $H^{\gvf_{\pm}}_1 ( F_{g_{\pm}}, \star )$.
\end{lemma}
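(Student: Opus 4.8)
The plan is to mimic the argument of Cimasoni--Turaev, with the equivariant intersection form $S$ of Section~3 playing the role that the usual intersection form plays in \cite{CiTu05I}. Write $H_\pm := H^{\gvf_\pm}_1(F_{g_\pm},\star)$, let $H := H_-\oplus H_+$ equipped with the skew-Hermitian form $(-\SSempty_-)\oplus\SSempty_+$, and put $f := (-m_-)\oplus m_+\colon H \to H^\gvf_1(M,\star)$ and $N := \cl(\ker f)$. Since the closure of a submodule is always closed and $\Ann$ is automatically closed, it suffices to prove $\ker f \subseteq \Ann(\ker f)$ and $\Ann(\ker f)\subseteq \cl(\ker f)$ (the first gives $N\subseteq\Ann(N)=\Ann(\ker f)$, the second gives $\Ann(N)\subseteq N$). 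The inclusion $\ker f \subseteq \Ann(\ker f)$ is the ``isotropy'' statement: for $(x_-,x_+),(y_-,y_+)\in\ker f$ one must show $\SSform{x_+}{y_+}_+ - \SSform{x_-}{y_-}_- = 0$. First I would reduce $\SSempty$ to $\langle\cdot,\cdot\rangle$ via \eqref{eq:form_s}, noting that $\partial_*(x_\pm)$ are compatible under $f$ (they both compute the same boundary class, since $m_\pm$ fix $\star$), so the $\partial_*$-correction terms match and it is enough to treat $\form{\cdot}{\cdot}$. Then, using the topological interpretation of $\langle\cdot,\cdot\rangle$ as the equivariant intersection form $S = S_{F_g,\varphi,\{l\}\sqcup\{r\}}$ from \eqref{com:forms} and Theorem~\ref{th:Blanchfield}, the vanishing of $\SSform{x_+}{y_+}_+ - \SSform{x_-}{y_-}_-$ follows from the fact that the intersection form on $\partial M$ bounds: the classes $m_+(x_+)-m_-(x_-)$ and $m_+(y_+)-m_-(y_-)$ are zero in $H^\gvf_1(M,\star)$, hence lift to relative classes in $H^\gvf_2(M,\partial M\cup\star)$-type groups, and the equivariant intersection form of $F(g_-,g_+)=\partial M$ restricted to such classes vanishes by the standard ``the boundary of a manifold has trivial intersection form on the image of $H_*(M)$'' principle, adapted equivariantly.

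For the reverse inclusion $\Ann(\ker f)\subseteq\cl(\ker f)$, the strategy is the dimension/duality count of \cite[Section 4]{CiTu05I}. Rationalize everything: let $Q := Q(R)$, and work with $H_Q$, $(H_\pm)_Q$, $H^\gvf_1(M,\star)_Q$. Over the field $Q$ the forms $\SSempty_\pm$ are non-degenerate (determinant $4^{g_\pm}\neq 0$ by Lemma~\ref{lem:matrix_S}), so $\dim_Q H_Q = 2g_- + 2g_+$ and $H_Q$ is a non-degenerate skew-Hermitian $Q$-vector space. A submodule $A\subseteq H$ satisfies $\cl(A) = A \iff A$ is closed, and $\Ann(A)$ is always closed; moreover over $Q$ one has $\dim_Q A_Q + \dim_Q \Ann(A)_Q = \dim_Q H_Q$ and $\Ann(\Ann(A)_Q) = A_Q$ for any subspace. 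So it is enough to show $\cl(\ker f) = \ker(f_Q)$ as subspaces of $H_Q$ (clear, since $\cl$ is exactly saturation) and then that $\ker(f_Q)$ is its own annihilator in $H_Q$, i.e. $\dim_Q\ker f_Q = g_-+g_+$ and $\ker f_Q\subseteq\Ann(\ker f_Q)$ — the latter is the rationalized version of the isotropy already proved. The dimension count $\dim_Q\ker f_Q = g_-+g_+$ is where the half-lives-half-dies principle enters: one shows $\operatorname{rk}_Q\bigl(\im(m_{\pm,Q})\bigr)$ and the rank of $f_Q$ add up correctly, using that $H^\gvf_1(\partial M,\star)_Q\to H^\gvf_1(M,\star)_Q$ has image of half the rank of $H^\gvf_1(\partial M,\star)_Q$ (a twisted, relative-to-a-point version of the classical statement), together with the identification $H^\gvf_1(\partial M,\star)\cong H_-\oplus H_+$ coming from Mayer--Vietoris and the fact that each $H^\gvf_1(F_{g_\pm},\star)$ is free of rank $2g_\pm$ regardless of the twisting. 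This ``half-lives-half-dies'' input is the technical heart; I would prove it by a Poincar\'e--Lefschetz duality argument for the pair $(M,\partial M)$ with twisted coefficients, combined with Theorem~\ref{th:Blanchfield} to handle torsion, exactly as Cimasoni--Turaev do in their setting.

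Finally, for the supplementary claim that $(\nu_-,\nu_+)\in N$: since $m_\pm\colon F_{g_\pm}\to M$ send the boundary curve $\nu$ of $F_{g_\pm}$ into $\partial M$, and the two copies of $\nu$ (sitting in $\partial_- M$ and $\partial_+ M$) are isotopic in $\partial M$ — indeed they cobound the annulus $S^1\times[-1,1]$ in $F(g_-,g_+)=\partial M$, with matching base-point paths — we get $m_-(\nu_-) = m_+(\nu_+)$ in $H^\gvf_1(M,\star)$, hence $(-m_-)(\nu_-) + m_+(\nu_+) = 0$, i.e. $(\nu_-,\nu_+)\in\ker f\subseteq N$. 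I expect the main obstacle to be the careful formulation and proof of the equivariant ``half-lives-half-dies'' dimension count over $Q$ in the relative-to-$\star$ setting, in particular bookkeeping the base-point carefully so that the twisted homology long exact sequences of $(M,\partial M)$, $(M,\star)$ and $(\partial M,\star)$ all fit together; the isotropy inclusion and the $(\nu_-,\nu_+)$ claim are comparatively routine once the right framework from Section~3 is in place.
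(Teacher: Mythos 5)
Your plan is essentially correct, and the first half (the isotropy inclusion $\ker f\subseteq\Ann(\ker f)$ via the bridge $\SSempty\leftrightarrow S_{\partial M}$ and the vanishing of the boundary intersection form on $\im\,\partial_*$) is the same mechanism as the paper's Lemma~\ref{Compatible} together with claim~(i) in its proof of Lemma~\ref{LagrG}. The genuine divergence is in the reverse inclusion. You propose to rationalize and run a dimension count: show $\ker f_Q$ is isotropic of dimension exactly $g_-+g_+$ in the non-degenerate $Q$-space $(H_-\oplus H_+)_Q$, hence Lagrangian over $Q$, and then saturate back to $R$. The paper instead stays integral throughout and argues purely algebraically: it shows $\cl(\ker f)=\mu^{-1}\big(\Ann(\mu(\ker f))\big)$ where $\mu=(-m_-)\oplus m_+$ lands in $H_1^\varphi(\partial M,\star)$, and then deduces $\Ann(\ker f)\subseteq\cl(\ker f)$ by applying the $\SSempty\leftrightarrow S_{\partial M}$ bridge a second time, with no rank computation. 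The two approaches trade off as follows. Your route is conceptually transparent (``isotropic of half dimension implies Lagrangian'') but the dimension count hides a case split: $\dim_Q H_1^{\varphi_Q}(\partial M)$ is $2(g_-+g_+)$ or $2(g_-+g_+)-2$ according to whether $\varphi$ restricted to $H_1(\partial M)$ is trivial, and simultaneously $\ker\mu_Q$ is $0$ or $Q\cdot(\nu_-,\nu_+)$ in those two cases, so the $\pm 1$'s cancel but you do have to chase them. The paper's route sidesteps this entirely, at the modest cost of the algebraic back-and-forth through $\mu^{-1}(\Ann(\cdot))$.

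One further structural difference worth noting: in the paper, $(\nu_-,\nu_+)\in\ker f$ is not merely the ``furthermore'' conclusion but plays a load-bearing role. To apply the $\SSempty\leftrightarrow S_{\partial M}$ comparison to an element $a\in\Ann(\ker f)$, one must first know that $\mu(a)$ lies in the absolute group $H_1^\varphi(\partial M)$, i.e. that $\partial_*(a_+)=\partial_*(a_-)$; the paper deduces exactly this from $0=\SSform{a}{(\nu_-,\nu_+)}=2(\partial_*(a_+)-\partial_*(a_-))$. Your dimension-count route never needs this step explicitly, so $(\nu_-,\nu_+)\in\ker f$ really is a free observation in your version — but if you ever try to mimic the paper's integral argument you will find yourself needing it mid-proof. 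Also be aware that when you write ``lift to relative classes in $H_2^\varphi(M,\partial M\cup\star)$-type groups'': the clean way to state it, as the paper does, is that $\ker j=\ker j^\star$ sits inside $H_1^\varphi(\partial M)$ by a ker--coker argument and is the image of $\partial_*\colon H_2^\varphi(M,\partial M)\to H_1^\varphi(\partial M)$, after which $S_{\partial M}(x,\partial_* y)=\pm\,S_M(j(x),y)$ does the job directly.
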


\begin{lemma} \label{FunctG}
	Let $(M, \gvf) \in \Cob{}_G ( ( g_{-}, \gvf_{-} ), ( g_{+}, \gvf_{+} ))$ and $(N, \psi) \in \Cob{}_G ( ( h_{-}, \psi_{-} ), ( h_{+}, \psi_{+} ))$ be such that $(g_{+}, \gvf_{+}) = (h_{-}, \psi_{-})$. Then $\Mag ( N \circ M, \psi + \gvf ) = \Mag (N, \psi) \circ \Mag (M, \gvf)$.
\end{lemma}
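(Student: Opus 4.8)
The plan is to prove the two lemmas in turn, following the template of \cite[Section 4]{CiTu05I} but adapting every homological argument to the relative twisted homology $H^\gvf_1(-,\star)$ and tracking the distinguished ``boundary'' elements so that the Lagrangian relations we produce are in fact \emph{pointed}. Throughout I would use the Mayer--Vietoris sequence of the decomposition $M = \dep_- M \cup (\text{collar})\cup\dep_+ M$-type gluings with twisted coefficients, together with Blanchfield duality (Theorem \ref{th:Blanchfield}) as the source of the Lagrangian (self-annihilator) property.

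For Lemma \ref{LagrG}, first I would set $H_\pm := H^{\gvf_\pm}_1(F_{g_\pm},\star)$, $f := (-m_-)\oplus m_+\colon H_-\oplus H_+ \to H^\gvf_1(M,\star)$, and $L := \cl(\ker f)$. The inclusion $\ker f \subset \Ann(\ker f)$ (with respect to $(-\SSempty)\oplus\SSempty$) is the ``isotropy'' statement: given $(x_-,x_+),(y_-,y_+)\in\ker f$, one writes the twisted intersection pairings $\SSform{x_\pm}{y_\pm}$ as equivariant intersection numbers in $F_{g_\pm}$ via the commutative diagram \eqref{com:forms}, pushes them into $M$, and uses that the two push-forwards $m_-(x_-)=m_+(x_+)$ etc.\ agree in $M$; the difference $(-\SSform{x_-}{y_-})+\SSform{x_+}{y_+}$ then computes the equivariant self-intersection of a relative $1$-cycle against a relative $1$-cycle supported on $\dep M$ inside the $3$-manifold $M$, which vanishes because $\dep M$ is separating-to-first-order in $M$ (more precisely, one uses that the images land in $H^\gvf_*(M,\star)$ and invokes the half-lives-half-dies / Blanchfield pairing argument of \cite[Lemma 4.something]{CiTu05I}). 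The reverse inclusion $\Ann(\ker f)\subset L$ is where one needs the closure: here I would use the long exact sequence of the pair together with Blanchfield duality for $M$ relative to suitable subsets of $\dep M$ to show $\Ann(\ker f)$ is contained in $\ker f$ rationally, i.e.\ in $\cl(\ker f)$. Finally, $(\nu_-,\nu_+)\in\ker f$ is immediate: $m_\pm$ sends the boundary curve $\nu$ of $F_{g_\pm}$ to the \emph{same} curve $\dep M$'s core in $M$ (both $\dep_\pm M$ share the collar $S^1\times[-1,1]$), so $(-m_-)(\nu_-)+m_+(\nu_+)=-[\widehat\nu_{\dep M}]+[\widehat\nu_{\dep M}]=0$; hence $(\nu_-/2,\nu_+/2)\in L_Q$, which is exactly the pointedness condition.

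For Lemma \ref{FunctG}, write $P := N\circ M$ obtained by gluing $\dep_+M \cong \dep_-N$ along $F_{g_+}$, and compare $\Mag(P,\psi+\gvf)=\cl\ker\big((-p_-)\oplus p_+\big)$ with $\Mag(N,\psi)\circ\Mag(M,\gvf)=\cl\big(\Mag(N,\psi)\,\check\circ\,\Mag(M,\gvf)\big)$ as submodules of $H_-\oplus H^{h_+}_1(F_{h_+},\star)$. The inclusion $\Mag(N,\psi)\check\circ\Mag(M,\gvf)\subset\cl\ker\big((-p_-)\oplus p_+\big)$ is formal: an element of the pre-composition comes with a witness $z\in H^{\gvf_+}_1(F_{g_+},\star)$ with $(x_-,z)\in\Mag(M,\gvf)$ and $(z,x_+)\in\Mag(N,\psi)$ up to clearing a denominator, and pushing into $M$ and $N$ and matching along $F_{g_+}$ shows the image in $H^{\psi+\gvf}_1(P,\star)$ is (rationally) zero, using the Mayer--Vietoris sequence
$$
H^{\cdot}_1(F_{g_+},\star)\longrightarrow H^{\cdot}_1(M,\star)\oplus H^{\cdot}_1(N,\star)\longrightarrow H^{\cdot}_1(P,\star)\longrightarrow H^{\cdot}_0(F_{g_+},\star)
$$
with the appropriate twistings. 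The reverse inclusion is the substantive one: given $(x_-,x_+)\in\ker\big((-p_-)\oplus p_+\big)$, the Mayer--Vietoris boundary supplies, after inverting one nonzero scalar, a class $z\in Q\otimes H^{\gvf_+}_1(F_{g_+},\star)$ whose images agree with $m_+(x$-side$)$ in $M$ and $n_-(x$-side$)$ in $N$; the point is to show $z$ can be taken \emph{integral} up to a further denominator and that $(x_-,z)$, $(z,x_+)$ genuinely lie in the respective closures --- i.e.\ that the Lagrangian modules $\Mag(M,\gvf)$, $\Mag(N,\psi)$ are ``closed enough'', which is exactly why the closure operation is built into the definition and why the composition in $\pLagr_R$ is defined with a $\cl$. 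I expect this matching-of-the-middle-class step, together with the careful bookkeeping that $\psi+\gvf$ restricts correctly on each piece so that all the twisted Mayer--Vietoris sequences are compatible, to be the main obstacle; once it is in place, Lemma \ref{lem:mon_mor}-style routine verifications and the already-established pointedness from Lemma \ref{LagrG} finish the proof. The pointedness of the composite is automatic: $(\nu_-/2,\nu_+/2)$ lies in $\Mag(N,\psi)\check\circ\Mag(M,\gvf)$ with witness $\nu/2$ for $F_{g_+}$, hence in its closure.
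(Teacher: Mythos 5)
Your plan is correct in spirit and uses the same key tool as the paper (the Mayer--Vietoris sequence for the gluing of $M$ and $N$ along $F_{g_+}$), but you frame the argument in a way that creates an obstacle the paper sidesteps. You try to compare the \emph{closed} modules $\Mag(N\circ M,\psi+\gvf)=\cl(\ker\mathfrak R)$ and $\Mag(N,\psi)\circ\Mag(M,\gvf)=\cl\big(\cl(\ker\mathfrak N)\,\check\circ\,\cl(\ker\mathfrak M)\big)$ directly, which is why you end up worrying about whether the Mayer--Vietoris witness $z$ lands in $H^{\gvf_+}_1(F_{g_+},\star)$ or only rationally in $Q\otimes H^{\gvf_+}_1(F_{g_+},\star)$. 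You flag this as ``the main obstacle'' but do not resolve it, and as stated it is a genuine gap: the pre-composition $\check\circ$ requires an integral middle element, and multiplying a rational $z$ by a denominator changes the pair $(x_-,z)$ in a way that is not obviously compensated by the closure on the outside.

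The paper's proof avoids this entirely by proving the \emph{un-closed} equality $\ker\mathfrak R=\ker\mathfrak N\,\check\circ\,\ker\mathfrak M$ first. One applies the snake (``ker-coker'') lemma to the commutative diagram whose bottom row is the Mayer--Vietoris sequence
$$
0\longrightarrow\ker\varpi\longrightarrow H^{\gvf}_1(M,\star)\oplus H^{\psi}_1(N,\star)\stackrel{\varpi}{\longrightarrow}H^{\psi+\gvf}_1(N\circ M,\star)\longrightarrow 0
$$
and whose top row is $0\to H^{\gvf_+}_1(F_{g_+},\star)\to H^{\gvf_-}_1\!\oplus H^{\gvf_+}_1\!\oplus H^{\psi_+}_1\to H^{\gvf_-}_1\!\oplus H^{\psi_+}_1\to 0$ with vertical maps $\iota=(m_+,-n_-)$, $\gamma=(\mathfrak M,\mathfrak N)$, $\mathfrak R$. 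Since $\iota$ is \emph{surjective} onto $\ker\varpi$ by exactness of Mayer--Vietoris, $\coker\iota=0$ and the snake lemma gives $p(\ker\gamma)=\ker\mathfrak R$; unwinding $p(\ker\gamma)$ gives exactly $\ker\mathfrak N\,\check\circ\,\ker\mathfrak M$. Crucially, the $z$ produced here is an honest element of $H^{\gvf_+}_1(F_{g_+},\star)$, so no denominators appear. Only \emph{after} this integral identity is established does one pass to closures, via the general commutation lemma $\cl\big(\cl A\,\check\circ\,\cl B\big)=\cl(A\,\check\circ\,B)$ from \cite[Lemma 2.6]{CiTu05I}. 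You should reorganize your argument along these lines: first establish the raw-kernel equality via the snake lemma (which dissolves the rational-vs-integral worry), then invoke the closure-compatibility lemma. The pointedness bookkeeping at the end of your sketch is fine.
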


Let $(M, \gvf) \in \Cob{}_G ( ( g_{-}, \gvf_{-} ), ( g_{+}, \gvf_{+} ))$. Observe that the ``vertical'' boundary $m(S^1 \times [-1,1])$ of the cobordism  $M$ can be collapsed onto the circle $m(S^1 \times \lbrace 0 \rbrace )$ without changing the homeomorphism type of $M$. Thus we can assume  that $m_+(\partial F_{g_{+}}) = m_-(\partial F_{g_{-}}) \subset \partial M$, so that the \lq\lq bottom boundary\rq\rq{} $\partial_- M=m_-(F_{g_-})$ and the \lq\lq top boundary\rq\rq{} $\partial_+M = m_+(F_{g_+})$ share the same base point $\star \in \partial M$. Let 
\begin{equation} \label{eq:i}
\mu~ \colon H^{\gvf_{-}}_1 ( F_{g_{-}}, \star ) \oplus H^{\gvf_{+}}_1 ( F_{g_{+}}, \star ) \longrightarrow H^{\gvf}_1 ( \dep M, \star )
\end{equation}
be the direct sum of the \emph{opposite} of  the homomorphism induced by $m_-~ \colon F_{g_-} \to \partial M$ with the homomorphism induced by $m_+~ \colon F_{g_+} \to \partial M$. The long exact sequence for the pair $(\partial M,\star)$ shows that $H_1^\gvf(\partial M)$ 
can be regarded as a submodule of $H_1^\gvf(\partial M,\star)$.
The following lemma is needed for the proof of Lemma \ref{LagrG}.

\begin{lemma} \label{Compatible}
	If $a, b \in H^{\gvf_{-}}_1 ( F_{g_{-}}, \star ) \oplus H^{\gvf_{+}}_1 ( F_{g_{+}}, \star )$ are such that $\mu(a), \mu(b) \in H^{\gvf}_1 ( \dep M )$, then 
	$$ \SSform{a}{b} = 2\, S_{\dep M} \big( \mu(a), \mu(b) \big) $$
	where  $\SSempty$ denotes the skew-Hermitian form $(- \SSempty) \oplus \SSempty$ on $H^{\gvf_{-}}_1 ( F_{g_{-}}, \star ) \oplus H^{\gvf_{+}}_1 ( F_{g_{+}}, \star )$,
	and  $S_{\partial M}~ \colon H^{\gvf}_1( \dep M ) \times H^{\gvf}_1( \dep M ) \rightarrow R$ is the equivariant intersection form of $\partial M$.
\end{lemma}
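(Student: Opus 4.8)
The statement compares two bilinear pairings evaluated on the same pair of classes, so the natural strategy is to reduce everything to the equivariant intersection form $S$ on $F_g$ via the commutative diagram \eqref{com:forms}, and then exploit naturality of equivariant intersection forms under the inclusions $m_\pm\colon F_{g_\pm}\hookrightarrow\partial M$. First I would unwind the definition of $\SSempty=(-\SSempty)\oplus\SSempty$ on $H^{\gvf_-}_1(F_{g_-},\star)\oplus H^{\gvf_+}_1(F_{g_+},\star)$: writing $a=(a_-,a_+)$ and $b=(b_-,b_+)$, we have $\SSform{a}{b}=-\SSform{a_-}{b_-}+\SSform{a_+}{b_+}$, and by \eqref{eq:form_s} each term is $2\form{a_\pm}{b_\pm}-\dep_*(a_\pm)\overline{\dep_*(b_\pm)}$. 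Using \eqref{com:forms}, the pairing $\formempty$ on $H^{\gvf_\pm}_1(F_{g_\pm},\star)$ is identified with $S_{F_{g_\pm},\gvf_\pm,\{l\}\sqcup\{r\}}$ after transporting the base point $\star$ to $l$ on the left factor and to $r$ on the right factor; so the "$2\form{\cdot}{\cdot}$" contributions become $2S_{F_{g_-}}$ and $2S_{F_{g_+}}$ respectively.

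The heart of the argument is then the following: the homomorphisms $m_\pm$ send the pair $(F_{g_\pm},l\sqcup r)$ into $(\partial M,\text{(two nearby points on }\partial(\partial M)=\varnothing))$ — since $\partial M$ is closed, this is really a statement about absolute classes, which is why the hypothesis $\mu(a),\mu(b)\in H^{\gvf}_1(\partial M)$ is imposed. Under this hypothesis, naturality of Reidemeister's equivariant intersection form (i.e. the fact that a map of manifolds of the same dimension which is a local homeomorphism near the relevant simplices preserves intersection numbers, together with the behaviour of $S$ under the isomorphism relating relative and absolute homology when the boundary part is empty) gives
$$
S_{F_{g_-}}(a_-,b_-)+S_{F_{g_+}}(a_+,b_+)=S_{\partial M}\big(\mu(a),\mu(b)\big),
$$
where the first term appears \emph{without} a sign because $\mu$ already incorporates the $-$ on the $F_{g_-}$ factor — concretely, reversing the orientation of the source changes the sign of the intersection form, and $\partial_- M$ inherits the \emph{opposite} orientation from $\partial M$ as its boundary, so the two sign changes are exactly what make $(-m_-)$ orientation-compatible with the inclusion into $\partial M$. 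Finally I would check that the "correction terms" $-\dep_*(a_\pm)\overline{\dep_*(b_\pm)}$ cancel: the hypothesis $\mu(a)\in H^{\gvf}_1(\partial M)$ means precisely that $\mu(a)$ lifts to an absolute class, i.e. $\dep_*(\mu(a))=0$ in $H^{\gvf}_0(\star)=R$; tracing through \eqref{eq:i} this reads $-\dep_*(a_-)+\dep_*(a_+)=0$, and similarly for $b$, and since both factors use the same base point $\star$ and the connecting map is just "evaluate the boundary", the equalities $\dep_*(a_-)=\dep_*(a_+)$ and $\dep_*(b_-)=\dep_*(b_+)$ make the $(-)$ contribution from the $F_{g_-}$ term cancel the $(+)$ contribution from the $F_{g_+}$ term. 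Assembling: $\SSform{a}{b}=2S_{F_{g_-}}(a_-,b_-)\cdot(-1)\cdot(-1)$ wait — more carefully, $\SSform{a}{b}=2\big(S_{F_{g_+}}(a_+,b_+)-S_{F_{g_-}}(a_-,b_-)\big)$ and the sign on the $F_{g_-}$ term is absorbed into $\mu$, yielding $2S_{\partial M}(\mu(a),\mu(b))$.

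The main obstacle I anticipate is making the naturality step rigorous at the chain level: Reidemeister's form is defined using a triangulation $T$ with $J$ a subcomplex of $T$ and $J'$ a subcomplex of the dual decomposition $T^*$, and $m_\pm$ need not be simplicial. One has to either (a) choose compatible triangulations of $F_{g_\pm}$ and $\partial M$ so that $m_\pm$ is simplicial and carries $\{l\},\{r\}$ to appropriately dual subcomplexes, or (b) invoke the invariance of $S$ under subdivision together with a small homotopy pushing $l,r$ off the image, and then argue that intersection numbers $hx_i\bullet x'_{i'}$ are preserved because $m_\pm$ is an embedding. I would adopt approach (a), citing the fact (implicit already in \cite{Tu78} and in the use of \eqref{com:forms} above) that $\formempty$ has a purely homotopy-theoretic description via $\gl$, so that the identification with $S$ is independent of these triangulation choices; this lets me compute $\SSform{a_\pm}{b_\pm}$ on whichever triangulation of $\partial M$ is convenient. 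A secondary subtlety is bookkeeping the two base points $l<\star<r$ and the arcs connecting them, which enter \eqref{com:forms}; I would handle this exactly as in the paragraph preceding \eqref{com:forms}, noting that in $\partial M$ these two arcs together form a small arc through $\star$ which is null-homotopic, so they do not affect the absolute classes $\mu(a),\mu(b)$.
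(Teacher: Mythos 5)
Your overall strategy --- expand $\SSempty$ via \eqref{eq:form_s}, identify $\formempty$ with $S_{F_{g_\pm}}$ through \eqref{com:forms}, reduce to a comparison of equivariant intersection forms on $F_{g_\pm}$ and $\partial M$, and use the hypothesis $\mu(a),\mu(b)\in H_1^\varphi(\partial M)$ to kill the $\partial_*$-correction terms --- is the same as the paper's. The place where you diverge is the key identity $S_{\partial M}(\mu(a),\mu(b))=\langle a_+,b_+\rangle-\langle a_-,b_-\rangle$: you invoke an abstract naturality of Reidemeister's $S$ under the inclusions $m_\pm$ and explicitly flag that making this precise at the chain level (triangulations vs.\ the dual decomposition, position of $l,r$) is the main obstacle; the paper sidesteps all of that by constructing $\widehat{\partial M}$ explicitly as a union of copies of $\widehat{F}_{g_-}$ and $\widehat{F}_{g_+}$ glued along tubes, and then observing that the chain representatives of $\mu(a)$ and $\mu(b)$ decompose into pieces supported in these disjoint copies, so that the intersection number splits term by term. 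That chain-level construction is exactly the ``rigorous naturality'' you were missing, and it avoids any appeal to simplicial approximation.

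There is also a genuine sign error in your write-up. You first assert $S_{F_{g_-}}(a_-,b_-)+S_{F_{g_+}}(a_+,b_+)=S_{\partial M}(\mu(a),\mu(b))$ with a plus sign, reasoning that ``the $-$ in $(-m_-)$'' and ``$m_-$ is orientation-reversing'' cancel. This is not the right accounting: the minus in $\mu=(-m_-)\oplus m_+$ enters both arguments of the bilinear form, producing $(-1)(-1)=+1$ and hence cancelling \emph{itself}; the orientation reversal of $m_-\colon F_{g_-}\hookrightarrow\partial M$ then contributes a genuine $-1$, so that the correct statement is $S_{\partial M}(\mu(a),\mu(b))=S_{F_{g_+}}(a_+,b_+)-S_{F_{g_-}}(a_-,b_-)$ (which is what the paper proves via $S_{(-F_{g_-})}=-S_{(F_{g_-})}$). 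You do in fact write this corrected version in your ``Assembling'' sentence, but it directly contradicts the displayed equation a few lines above; the two cannot coexist unless $S_{F_{g_-}}(a_-,b_-)=0$. So your final conclusion lands on the right formula, but the explanation given for the sign is wrong, and the intermediate displayed identity is false as stated.
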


\begin{proof}  

We first describe how the maximal abelian cover $\widehat{\dep M}$ of $\dep M$ can be constructed from the maximal abelian covers  $\widehat{F}_{g_-}$  and  $\widehat{F}_{g_+}$  of $F_{g_-}$ and $F_{g_+}$, respectively. Consider a copy $y \! \cdot \widehat{F}_{g_-}$ of  $\widehat{F}_{g_-}$ for every $y \in H_1(F_{g_+})$ and, similarly, consider a copy $x \!\cdot\! \widehat{F}_{g_+}$ of  $\widehat{F}_{g_+}$ for every $x \in H_1(F_{g_-})$. In the sequel, the abelian group $H_1(F_{g_{\pm}})$ is denoted multiplicatively. The copy $1\!\cdot \widehat{F}_{g_\pm}$ indexed by the identity element $1\in H_1(F_{g_\mp})$ is simply denoted by $\widehat{F}_{g_\pm}$, and its preferred based point is simply denoted by $\widehat{\star}$. Let $\widehat{\nu}_\pm \subset \widehat{F}_{g_\pm}$ be the unique lift of the circle $\nu_\pm:= \partial F_{g_\pm}$ that contains the base-point $\widehat{\star}$: the preimage of $\nu_\pm$ in $\widehat{F}_{g_\pm}$ consists of all the circles $z\cdot \widehat{\nu}_\pm$ obtained from $\widehat{\nu}_\pm$  by the deck transformations $z\in H_1(F_{g_\pm})$. Then $\widehat{\dep M}$ is the connected surface obtained from the disjoint union
$$ \Big(\bigsqcup_{y \in H_1(F_{g_+})} y \!\cdot\!  \widehat{F}_{g_-} \Big) \ \sqcup \ \Big(\bigsqcup_{x \in H_1(F_{g_-})} x \!\cdot\! \widehat{F}_{g_+} \Big) $$
by connecting with a tube the circle $y\!\cdot\! ( x \!\cdot\! \widehat{\nu}_-)$ in the copy $ y \!\cdot\!  \widehat{F}_{g_-}$ to the circle $x\! \cdot\! ( y \!\cdot\! \widehat{\nu}_+)$ in the copy $x \!\cdot\!  \widehat{F}_{g_+}$, for any $x \in H_1(F_{g_-})$ and $y \in H_1(F_{g_+})$. 
The action of $H_1(\dep M) \simeq H_1(F_{g_-}) \oplus H_1(F_{g_+})$ by deck transformations on $\widehat{\dep M}$ is the obvious action that is suggested by our notations: see Figure~\ref{fig:cover}. 

 	\begin{figure}[h]
		\centering
		\includegraphics[scale=0.50]{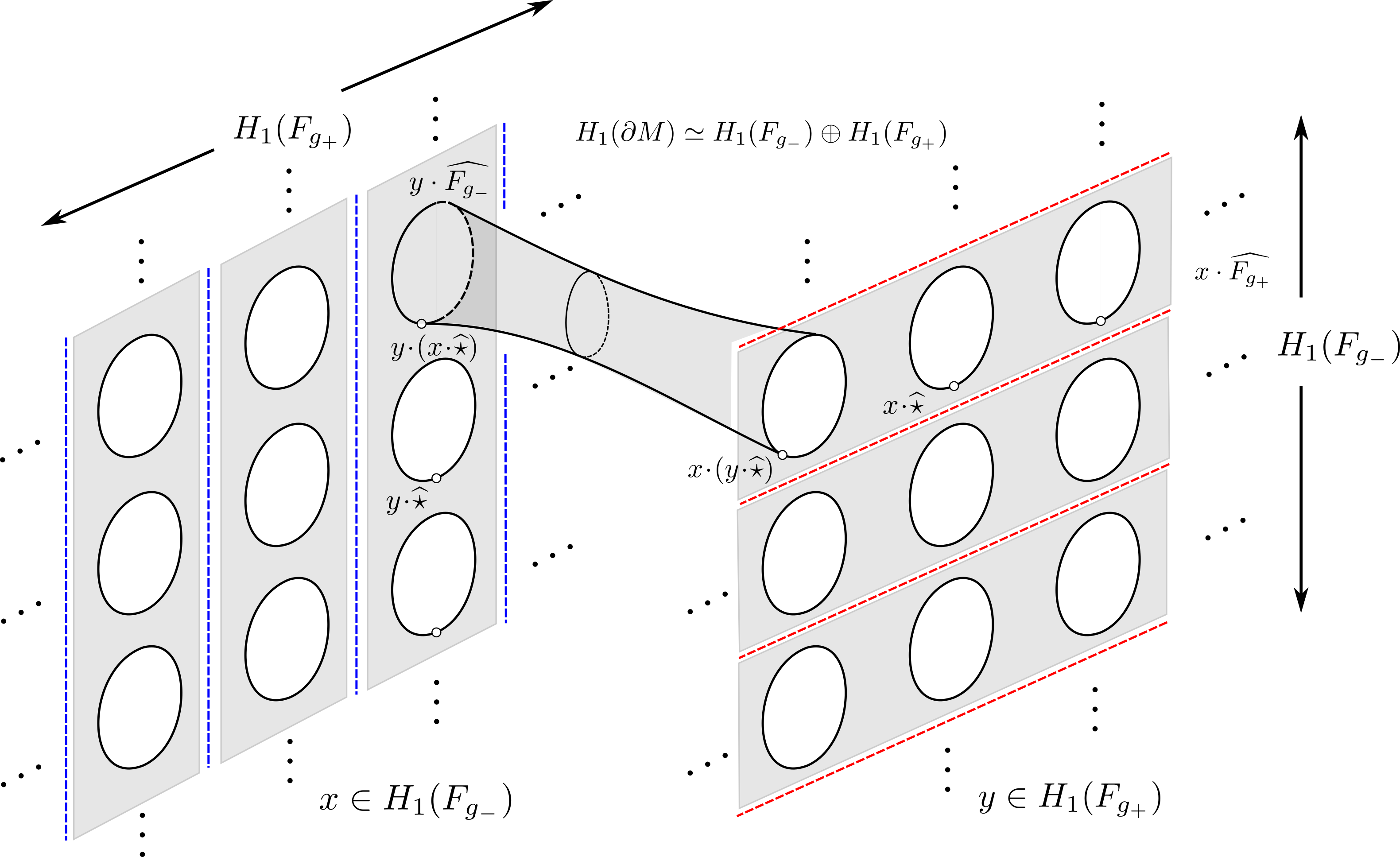}
		\caption{The maximal abelian cover $\widehat{\dep M}$ of $\dep M$.}
		\label{fig:cover}
	\end{figure}

Let $l, r\in \partial F_{g_\pm}$ be some points such that  $l < \star < r$ if we follow $\partial F_{g_\pm}$ in the positive direction. Let $\overline{\nu}_{\star l}$ be the oriented arc joining $\star$ to $l$ in $\partial F_{g_\pm} \setminus \{r\}$, and let ${\nu}_{\star r}$ be the oriented arc joining $\star$ to $r$ in $\partial F_{g_\pm} \setminus \{l\}$: the endpoints of the lifts of $\overline{\nu}_{\star l}$ and  ${\nu}_{\star r}$ 
to $\widehat{F}_{g_\pm}$ that start at $\widehat{\star}$ are denoted by $\widehat{l}$ and $\widehat{r}$ respectively. Consider now some elements $r_{-,k},r_{+,k}\in R$ (where $k$ ranges over a finite set) and some loops $\alpha_{-,k}$ in $F_{g_-}$ and $\alpha_{+,k}$ in $F_{g_+}$ based at $l$, such that 
\begin{equation} \label{eq:cond_a}
\sum_{k} r_{-,k} \big(\gvf_-([\alpha_{-,k}]) -1\big)= \sum_k r_{+,k} \big(\gvf_+([\alpha_{+,k}]) -1\big) \in R.
\end{equation}
Similarly, consider some elements $s_{-,n},s_{+,n} \in R$ (where $n$ ranges over a finite set) 
and some loops $\beta_{-,n}$ in $F_{g_-}$ and $\beta_{+,n}$ in $F_{g_+}$ based at $r$, such that 
\begin{equation} \label{eq:cond_b}
\sum_{n} s_{-,n} \big(\gvf_-([\beta_{-,n}]) -1\big)= \sum_{n} s_{+,n} \big(\gvf_+([\beta_{+,n}]) -1\big) \in R.
\end{equation}
Set
$$
a' =(a'_-,a'_+):=  \Big(\Big[\sum_k r_{-,k} \otimes \widehat{\alpha}_{-,k} \Big], \Big[\sum_k r_{+,k} \otimes \widehat{\alpha}_{+,k} \Big] \Big) 
\in  H^{\gvf_{-}}_1 ( F_{g_{-}}, l ) \oplus H^{\gvf_{+}}_1 ( F_{g_{+}}, l )
$$
where $\widehat{\alpha}_{\pm,k} \subset \widehat{F}_{g_\pm}$ denotes the unique lift of $\alpha_{\pm,k}$ starting at $\widehat{l}$,
and let $a=(a_-,a_+)$ be the image of $a'=(a'_-,a'_+)$ by the isomorphism $H^{\gvf_{-}}_1 ( F_{g_{-}}, l ) \oplus H^{\gvf_{+}}_1 ( F_{g_{+}}, l ) \stackrel{\simeq}{\longrightarrow} H^{\gvf_{-}}_1 ( F_{g_{-}}, \star ) \oplus H^{\gvf_{+}}_1 ( F_{g_{+}}, \star )$ that is induced by the arcs $\overline{\nu}_{\star l} \subset F_{g_-}$ and $\overline{\nu}_{\star l} \subset F_{g_+}$. Similarly, set 
$$ 
b' =(b'_-,b'_+) := \Big(\Big[\sum_{n} s_{-,n} \otimes \widehat{\beta}_{-,n} \Big], \Big[\sum_{n}  s_{+,n} \otimes \widehat{\beta}_{+,n} \Big] \Big) 
\in  H^{\gvf_{-}}_1 ( F_{g_{-}}, r ) \oplus H^{\gvf_{+}}_1 ( F_{g_{+}}, r )
$$
where $\widehat{\beta}_{\pm,n} \subset \widehat{F}_{g_\pm}$ denotes the unique lift of $\beta_{\pm,n}$ starting at $\widehat{r}$,
and let $b=(b_-,b_+)$ be the image of $b'=(b'_-,b'_+)$ by the isomorphism 
$H^{\gvf_{-}}_1 ( F_{g_{-}}, r ) \oplus H^{\gvf_{+}}_1 ( F_{g_{+}}, r) \stackrel{\simeq}{\longrightarrow} H^{\gvf_{-}}_1 ( F_{g_{-}}, \star ) \oplus H^{\gvf_{+}}_1 ( F_{g_{+}}, \star )$
that is  induced by the arcs ${\nu}_{\star r} \subset F_{g_-}$ and ${\nu}_{\star r} \subset F_{g_+}$. The assumption  \eqref{eq:cond_a} is equivalent to the condition
$
\dep_*(a'_-) = \dep_*(a'_+)
$
where  $\dep_*~ \colon H^{\gvf_{\pm}}_1 ( F_{g_{\pm}}, l ) \to H^{\gvf_\pm}_0 ( l ) \simeq R $ is the connecting homomorphism in the long exact sequence of the pair $(F_{g_\pm},l)$; the latter is also equivalent to the condition $\mu(a) \in H^{\gvf}_1 ( \dep M )$. Similarly, the assumption  \eqref{eq:cond_b}  is equivalent to the condition $\mu(b) \in H^{\gvf}_1 ( \dep M )$. Thus we have described the general form of some elements $a, b  \in H^{\gvf_{-}}_1 ( F_{g_{-}}, \star ) \oplus H^{\gvf_{+}}_1 ( F_{g_{+}}, \star )$ satisfying $\mu(a), \mu(b) \in H^{\gvf}_1 ( \dep M ) \subset H^{\gvf}_1 ( \dep M,\star ) $.

Note that, with the above description of $a$ and $b$,  we have 
$$
\mu(a) = \Big[\sum_k r_{+,k} \otimes \widehat{\alpha}_{+,k} - \sum_k r_{-,k} \otimes \widehat{\alpha}_{-,k}  \Big]
\quad \hbox{and} \quad
\mu(b) = \Big[ \sum_{n} s_{+,n} \otimes \widehat{\beta}_{+,n} - \sum_{n}  s_{-,n} \otimes \widehat{\beta}_{-,n}  \Big]
$$
where $\widehat{\alpha}_{\pm,k}$ and  $\widehat{\beta}_{\pm, n}$ are now regarded as $1$-chains in $\widehat{\partial M}$ thanks to the inclusion $\widehat{F}_{g_\pm} \subset \widehat{\partial M}$. (It follows from \eqref{eq:cond_a} and \eqref{eq:cond_b} that the above  $1$-chains are indeed $1$-cycles in $C^\varphi(\partial M) = {R \otimes_{\ZZ[H_1(\partial M)]} C(\widehat{\partial M})}$.) We deduce that 
\begin{eqnarray*}
S_{\dep M}( \mu(a), \mu(b) ) 
&=&  S_{(F_{g_+})}\Big( \Big[ \sum_k r_{+,k}  \otimes \widehat{\alpha}_{+,k} \Big],   \Big[ \sum_{n}  s_{+,n}  \otimes \widehat{\beta}_{+,n}  \Big]\Big) \\
&& + S_{(-F_{g_-})} \Big( \Big[ \sum_k r_{-,k}  \otimes \widehat{\alpha}_{-,k}  \Big],   \Big[ \sum_{n}  s_{-,n}  \otimes \widehat{\beta}_{-,n}  \Big]\Big) \\
 &=&  S_{(F_{g_+})}\big(  a'_+,  b'_+ \big) -  S_{(F_{g_-})}\big(  a'_-,  b'_- \big) 
 \ \by{com:forms} \ \langle  a_+,b_+ \rangle - \langle a_-,b_- \rangle
\end{eqnarray*}
where $S_{(F_{g_\pm})}:=S_{F_{g_\pm},\varphi_\pm,\{l\}\sqcup \{r\}}$ is the equivariant intersection form of $F_{g_\pm}$. Furthermore, we have $\partial_*(a_+) =  \partial_*(a_-)$ and  $\partial_*(b_+) = \partial_*(b_-)$ by \eqref{eq:cond_a} and \eqref{eq:cond_b}, respectively. We conclude that 
\begin{eqnarray*}
2 S_{\dep M}( \mu(a), \mu(b) ) &=& 2   \langle  a_+,b_+ \rangle - 2 \langle a_-,b_- \rangle \\
& \by{eq:form_s} & \langle  a_+,b_+ \rangle_s  +  \partial_*(a_+)  \overline{\partial_*(b_+)} - \langle a_-,b_- \rangle_s  -  \partial_*(a_-) \overline{\partial_*(b_-)} \\
&=& \langle  a_+,b_+ \rangle_s - \langle a_-,b_- \rangle_s \ = \  \langle  a ,b \rangle_s .
\end{eqnarray*}

\up
\end{proof}

The rest of this subsection is devoted to the proofs of Lemma \ref{LagrG} and Lemma \ref{FunctG}.

\begin{proof}[Proof of Lemma \ref{LagrG}]
The arguments below follow the same lines as  \cite[Lemma 3.3]{CiTu05I}, but there are also some technical complications due to  the peculiarities of the twisted intersection form  $\SSempty$. We start with a preliminary observation. Denote by $j~ \colon \partial M \to M$ and by $j^\star~\colon (\partial M,\star) \to (M,\star)$ the inclusions, and consider the following commutative diagram:
	\begin{center}
		\begin{tikzpicture}[>=angle 90,scale=2.2,text height=1.5ex, text depth=0.25ex]
		%%First place the nodes
		\node (a0) at (0,1) {\normalcolor $0$};
		\node (a1) [right=of a0] {\normalcolor $H^{\gvf}_1 ( \dep M )$};
		\node (a2) [right=of a1] {\normalcolor $H^{\gvf}_1 ( \dep M, \star )$};
		\node (a3) [right=of a2] {\normalcolor $\partial_*\big(H^{\gvf}_1 ( \dep M, \star )\big)$};
		\node (a4) [right=of a3] {\normalcolor $0$};
		\node (b0) [below=of a0] {\normalcolor $0$};
		\node (b1) [below=of a1] {\normalcolor $H^{\gvf}_1 ( M )$};
		\node (b2) [below=of a2] {\normalcolor $H^{\gvf}_1 ( M , \, \star )$};
		\node (b3) [below=of a3] {\normalcolor $\partial_*\big(H^{\gvf}_1 ( M, \star )\big)$};
		\node (b4) [below=of a4] {\normalcolor $0$};
		
		%%Draw arrows
		\normalcolor
		\draw[->]
		(a0) edge (a1)
		(a1) edge node[auto] {} (a2)
		(a2) edge node[auto] {\normalcolor $\partial_*$} (a3)
		(a3) edge node[auto] {} (a4)
		(b0) edge (b1)
		(b1) edge (b2)
		(b2) edge node[auto] {\normalcolor $\partial_*$}  (b3)
		(b3) edge node[auto] {} (b4)
		(a1) edge node[auto] {\normalcolor $j$}  (b1)
		(a2) edge node[auto] {\normalcolor $j^\star$}   (b2)
		(a3) edge node[auto] {\normalcolor } (b3);
		\end{tikzpicture}
	\end{center}
Here the rows are extracted from the long exact sequences of the pairs $( \dep M, \star )$ and $( M, \, \star )$, $\partial_*\big(H^{\gvf}_1 ( \dep M, \star )\big)$ is  the submodule of $H_0^\gvf(\star)\simeq R$ generated by the $\varphi j(h)-1$ for all $h\in H_1(\partial M)$, $\partial_*\big(H^{\gvf}_1 (  M, \star )\big)$ is the submodule generated by the $\varphi (h)-1$ for all $h\in H_1(M)$, and the map $\partial_*\big(H^{\gvf}_1 ( \dep M, \star )\big) \to \partial_*\big(H^{\gvf}_1 (  M, \star )\big)$ is the inclusion. 
It follows from the ``ker-coker'' lemma that  $\ker j =\ker j^\star$: in other words, if we continue to regard $H_1^\gvf(\partial M)$ as a submodule of $H_1^\gvf(\partial M,\star)$, then  our observation is that $K :=  \ker j^\star$  is contained in $H_1^\gvf(\partial M)$. 

The above being  taken into account, we now have to prove that the closure $\cl(L)$ of
$$ 
L:=   \ker \Big(   (-m_{-}) \oplus m_{+}~ \colon H^{\gvf_{-}}_1 ( F_{g_{-}}, \star ) \oplus H^{\gvf_{+}}_1 ( F_{g_{+}}, \star ) \longrightarrow 
	H^{\gvf}_1( M, \, \star ) \Big)
$$	
in $H^{\gvf_{-}}_1 ( F_{g_{-}}, \star ) \oplus H^{\gvf_{+}}_1 ( F_{g_{+}}, \star )$ is a Lagrangian submodule. We first  claim the following:
\begin{itemize}
	\item[(i)] The annihilator $\Ann(K)$ of $K$ with respect to  $S_{\partial M}~ \colon H^{\gvf}_1( \dep M ) \times H^{\gvf}_1( \dep M ) \rightarrow R$
	coincides with the closure $\cl(K)$ of $K$ in $H^{\gvf}_1( \dep M )$.
	
	\item[(ii)] We have $\mu(L)=K$ where $\mu$ is the homomorphism \eqref{eq:i}; in particular,  $\mu(L) \subset H^{\gvf}_1 ( \dep M )$.
\end{itemize}
To prove claim (i), we consider the equivariant intersection form of $M$
$$ S_M := S_{M,\varphi, \varnothing \sqcup \dep M}~ \colon H^{\gvf}_1( M ) \times H^{\gvf}_2( M, \dep M ) \longrightarrow R, $$
which is related to the form $S_{\dep M}$ through the following identity: 
$$ 
\forall x \in H^{\gvf}_1( \dep M ), \ \forall y \in H^{\gvf}_2 ( M, \dep M ), \quad S_{\dep M} ( x, \dep_*(y) ) = \varepsilon\, S_M ( j(x), y );
$$
here $\varepsilon$ is a constant sign which we do not need to specify.
It follows that
\begin{eqnarray*}
	\Ann(K) &= &\lbrace x \in H^{\gvf}_1(\partial M) :  S_{\dep M} (x, \ker j) = 0 \rbrace \\
	&= &\lbrace x \in H^{\gvf}_1(\partial M) :  S_{\dep M} (x, \partial_* H^{\gvf}_2 ( M, \dep M )) = 0 \rbrace \\
	&=&  \lbrace x \in H_1(\partial M) : S_{M} ( j(x), H_2^{\gvf}(M, \partial M) )= 0 \rbrace 
	\ = \ j^{-1} \big(\Tors_{R}\, H^{\gvf}_1 ( M ) \big) \ = \ \cl(K),
\end{eqnarray*}
where the penultimate identity follows from Blanchfield's duality theorem (see Theorem \ref{th:Blanchfield}).

We now prove claim (ii). The inclusion $\mu(L) \subset K$ follows immediately from the facts that  $L=\ker(j^\star \circ \mu)$ and $K=\ker j^\star$.
The converse inclusion follows by the same argument from the surjectivity of the map $\mu$. (This surjectivity is a consequence of the Mayer--Vietoris theorem and the fact that $H_0^\gvf(\nu,\star)=0$ where $\nu$ denotes the circle $m_+(F_{g_{+}}) \cap m_-(F_{g_{-}})$ in $\partial M$.)

To proceed, we observe that the closure $\cl(K)$ of $K$ in $H^{\gvf}_1( \dep M )$ coincides with the closure of $K$ in $H^{\gvf}_1( \dep M ,\star)$, since $H^{\gvf}_1( \dep M )$ is the kernel of $\partial_*~ \colon  H^{\gvf}_1( \dep M ,\star) \to H^{\gvf}_0( \star) \simeq R$ and $R$ has no zero-divisor.
Using  (ii), it follows that  $\cl(L) \subset  \mu^{-1}(\cl(K))$ and, in particular,  $\mu(\cl(L))$ is contained in $H^{\gvf}_1( \dep M )$.
The converse inclusion  $  \mu^{-1}(\cl(K)) \subset \cl(L)$ is also true: for any $a \in \mu^{-1}(\cl(K))$, we have  $r \mu(a) \in K$ for some  $r \in R \setminus \{0\}$; hence $0= j^{\star} (r \mu(a)) = (j^{\star} \circ \mu)(r a) $, which implies that $r a \in L$ so that $a \in \cl(L)$.  Thus, we obtain 
\begin{equation} \label{eq:almost-Lagrangian}
	\cl(L) = \mu^{-1}(\cl(K)) \stackrel{\operatorname{(i)}}{=}  \mu^{-1}( \Ann(K) ) \stackrel{\operatorname{(ii)}}{=}  \mu^{-1}( \Ann( \mu(L) ) ) 
\end{equation}
where $\Ann( \mu(L) )$ denotes the annihilator of $\mu(L)$ with respect to $S_{\partial M}$.

We can now prove that $\cl(L) \subset \Ann(\cl(L))$.  Since $R$ has no zero-divisors, we have $\Ann(\cl(L)) = \Ann(L)$ so that it is enough to prove that $\cl(L) \subset \Ann(L)$. Let $a\in \cl(L)$: using Lemma \ref{Compatible}, we obtain
$$ \forall b\in L, \ \SSform{a}{b} = 2 S_{\dep M} ( \mu(a), \mu(b) ) \by{eq:almost-Lagrangian} 0 $$
which shows that $a\in \Ann(L)$ as desired.

To conclude that $\cl(L)$ is Lagrangian, it remains to show that $\Ann(L) \subset \cl(L)$. Consider any element $a \in {\Ann(L) \cap  \mu^{-1}( H^{\gvf}_1( \dep M ) )}$: using Lemma~\ref{Compatible}, we obtain
$$ 
\forall b\in L, \
 2\, S_{\dep M} ( \mu(a), \mu(b) )  = \SSform{a}{b}  = 0 
 $$
and it follows that $\mu(a) \in \Ann(\mu(L))$; we deduce from \eqref{eq:almost-Lagrangian} that $a\in \cl(L)$. Therefore we are reduced to proving that $\mu(\Ann(L)) \subset  H^{\gvf}_1( \dep M ) $. Since $\ker(\mu) \subset L$, we have $ \Ann(L) \subset \Ann( \ker(\mu) )$ and it is enough to show that
\begin{equation} \label{eq:last_inclusion}
\mu \big(\Ann( \ker(\mu) )\big) \subset  H^{\gvf}_1( \dep M ).
\end{equation}
Let $a=(a_-,a_+) \in H^{\gvf_{-}}_1 ( F_{g_{-}}, \star ) \oplus H^{\gvf_{+}}_1 ( F_{g_{+}}, \star )$ be an arbitrary element of $\Ann( \ker(\mu) )$.
Consider the distinguished element $(\nu_-,\nu_+) \in H^{\gvf_{-}}_1 ( F_{g_{-}}, \star ) \oplus H^{\gvf_{+}}_1 ( F_{g_{+}}, \star )$ defined by
$ \nu_\pm:= [1 \otimes \widehat{\nu}]$. Clearly $\mu(\nu_-,\nu_+) =0$ (so that, in particular, $(\nu_-,\nu_+)$ belongs to $L$). Therefore 
$$ 0 =  \SSform{a}{(\nu_-,\nu_+)}  =  \SSform{a_+}{\nu_+} - \SSform{a_-}{\nu_-}  
\by{eq:boundary_curve_bis}  2\, \partial_*(a_+) - 2\, \partial_*(a_-) $$
which implies that $\mu(a)  \in  H^{\gvf}_1( \dep M )$. This proves \eqref{eq:last_inclusion} and concludes the proof of the lemma.
\end{proof}

\begin{proof}[Proof of Lemma \ref{FunctG}]
The arguments below follow very closely those of  \cite[Lemma 3.4]{CiTu05I}. Consider the composition of two morphisms in the category $\Cob_G$  
$$
( g_{-}, \gvf_{-} ) \stackrel{(M, \gvf) }{\longrightarrow} ( g_{+}, \gvf_{+} ) = ( h_{-}, \psi_{-} )  \stackrel{(N, \psi) }{\longrightarrow}  ( h_{+}, \psi_{+} )
$$
and recall that, after collapsing the vertical boundary of $M$ onto the ``middle'' circle $m(S^1 \times \{0\})$, we can assume that $m_-(F_{g_-})$ and $m_+(F_{g_+})$ share the same base point $\star \in \partial M$. The same consideration holds for $n_-(F_{h_-})$ and $n_+(F_{h_+})$ in $\partial N$. Thus we have only one base point $\star$ in what follows. 
Recall now the homomorphisms
\begin{align*}
	\mathfrak{M}:= (-m_{-}) \oplus m_{+} &: H^{\gvf_{-}}_1 ( F_{g_{-}}, \star ) \oplus H^{\gvf_{+}}_1 ( F_{g_{+}}, \star ) \longrightarrow H^{\gvf}_1( M, \, \star ), \\
	\mathfrak{N}:= (-n_{-}) \oplus n_{+} &: H^{\psi_{-}}_1 ( F_{h_{-}}, \star ) \oplus H^{\psi_{+}}_1 ( F_{h_{+}}, \star ) \longrightarrow H^{\psi}_1( N, \, \star ), \\
	\mathfrak{R} := (-m_{-}) \oplus n_{+} &: H^{\gvf_{-}}_1 ( F_{g_{-}}, \star ) \oplus H^{\psi_{+}}_1 ( F_{h_{+}}, \star ) \longrightarrow H^{\psi+\gvf}_1( N\circ M, \, \star )
\end{align*}
used in the definition of the Lagrangian submodules $\Mag (M, \gvf)$, $\Mag (N, \psi)$, $\Mag(N \circ M, \psi+\gvf)$ respectively. It is sufficient to prove that
\begin{equation} \label{eq:pre}
 \ker(\mathfrak{R}) = \ker(\mathfrak{N}) \, \check \circ \ker(\mathfrak{M}), 
\end{equation}
where we use the notation \eqref{eq:pre-composition}. Indeed this claim implies that
\begin{align*}
	\Mag (N \circ M, \psi + \gvf) &= \cl \big( \ker(\mathfrak{R}) \big) \\
								  &= \cl \big( \ker(\mathfrak{N}) \, \check \circ \, \ker(\mathfrak{M}) \big) \\
								  &= \cl \Big( \cl \big( \ker(\mathfrak{N}) \big) \, \check \circ \cl \big( \ker(\mathfrak{M}) \big) \Big) \\
								  &= \cl \big( \Mag ( N, \psi ) \; \check \circ \; \Mag ( M, \gvf ) \big) = \Mag ( N, \psi ) \circ \Mag ( M, \gvf )
\end{align*}
where the third equality holds by \cite[Lemma 2.6]{CiTu05I}.

The construction of $N \circ M$ by identifying $\dep_{+}M$ and $\dep_{-}N$ using the boundary-parametrizations leads to the following Mayer--Vietoris exact sequence of $R$-modules:
$$
H^{\gvf_{+}}_1 ( F_{g_{+}}, \star ) \overset{\bgm \iota \egm}{\longrightarrow}  H^{\gvf}_1 ( M, \star ) \oplus H^{\psi}_1 ( N, \star ) 
\overset{\bgm \varpi \egm}{\longrightarrow} H^{\psi + \gvf}_1( N \circ M, \star ) \longrightarrow 0
$$
Here $\bgm \iota \egm:=(m_+,-n_-)$ and $\bgm \varpi \egm$ is the sum of the homomorphisms induced by the inclusions of $M$ and $N$ in $N\circ M$. Consider the commutative diagram

\begin{center}
	\begin{tikzpicture}[>=angle 90,scale=2.2,text height=1.5ex, text depth=0.25ex]
	%%First place the nodes
	\normalcolor
	\node (a0) at (0,1) {\normalcolor $0$};
	\node (a1) [right=of a0] {\normalcolor $H^{\gvf_{+}}_{g_{+}}$};
	\node (a2) [right=of a1] {\normalcolor $H^{\gvf_{-}}_{g_{-}} \oplus H^{\gvf_{+}}_{g_{+}} \oplus H^{\psi_{+}}_{h_{+}}$};
	\node (a3) [right=of a2] {\normalcolor $H^{\gvf_{-}}_{g_{-}} \oplus H^{\psi_{+}}_{h_{+}}$};
	\node (a4) [right=of a3] {\normalcolor $0$};
	\node (b0) [below=of a0] {\normalcolor $0$};
	\node (b1) [below=of a1] {\normalcolor $\ker(\bgm \varpi \egm)$};
	\node (b2) [below=of a2] {\normalcolor $H^{\gvf}_1 ( M, \star ) \oplus H^{\psi}_1 ( N, \star )$};
	\node (b3) [below=of a3] {\normalcolor $H^{\psi + \gvf}_1 ( N \circ M, \star )$};
	\node (b4) [below=of a4] {\normalcolor $0$};
	
	%%Draw arrows
	\draw[->]
	(a0) edge (a1)
	(a1) edge node[auto] {\normalcolor $i$} (a2)
	(a2) edge node[auto] {\normalcolor $p$} (a3)
	(a3) edge (a4)
	(b0) edge (b1)
	(b1) edge (b2)
	(b2) edge node[auto] {\normalcolor $\bgm \varpi \egm$} (b3)
	(b3) edge (b4)
	(a1) edge node[auto] {\normalcolor $\bgm \iota \egm$} (b1)
	(a2) edge node[auto] {\normalcolor $\ggm$} (b2)
	(a3) edge node[auto] {\normalcolor $\mathfrak{R}$} (b3);
	\end{tikzpicture}
\end{center}

where the symbol $H^{\rho}_{n}$ denotes  a twisted homology group $H^{\rho}_1 ( F_{n}, \star )$, the homomorphism $i$ is the natural inclusion, $p$ is the natural projection and the map $\gamma$ is defined by  $\ggm(x, x', x'') := ( \mathfrak{M} (x, x'), \mathfrak{N} (x', x''))$. On the one hand, we have 
\begin{align*}
p \big( \ker(\ggm) \big) = & \big\lbrace (x , x'') \in H^{\gvf_{-}}_{g_{-}} \oplus H^{\psi_{+}}_{h_{+}}
:  \mathfrak{M}(x, x') = 0 \hbox{ and } \mathfrak{N}(x',x'')=0 \; \text{for some} \; x' \in H^{\gvf_{+}}_{g_{+}} \big\rbrace \\
= & \ker( \mathfrak{N} ) \check \circ \ker( \mathfrak{M} ).
\end{align*}
On the other hand, the ``ker-coker'' lemma applied to the above diagram shows that $p(\ker(\ggm)) = \ker( \mathfrak{R} )$ since \bgm $\iota$ \egm  is surjective. This proves the  claim \eqref{eq:pre}.
\end{proof}

% ***************************************************************************
% ***************************************************************************
 
\subsection{Properties of the functor $\Mag$} \label{Properties}

We show two fundamental properties of the Magnus functor. The first one involves the following relation among cobordisms. Two cobordisms $(M_1, \gvf_1),$ $(M_2,\gvf_2) \in \Cob_{G}((g_{-}, \gvf_{-}), (g_{+}, \gvf_{+}))$ 
are \emph{homology concordant} if there exists a compact connected oriented $4$-manifold $W$ with 
$$ \partial W = M_1 \cup_{m_1 \circ m_2^{-1}} (-M_2) $$
such that the inclusion maps $i_1~ \colon M_1 \hookrightarrow W$ and $i_2~ \colon M_2 \hookrightarrow W$ induce isomorphisms at the level of $H(\cdot ;\ZZ)$ and satisfy $\gvf_1 \circ (i_1)^{-1} = \gvf_2 \circ (i_2)^{-1}~ \colon H_1(W) \to G$. In such a situation, we  write  $(M_1, \gvf_1) \sim_H (M_2, \gvf_2)$. It is easily verified that $\sim_H$ is an equivalence relation on the set $\Cob_{G}((g_{-}, \gvf_{-}), (g_{+}, \gvf_{+}))$ for any objects $(g_{-}, \gvf_{-}), (g_{+}, \gvf_{+})$, and that $\sim_H$ defines a congruence relation on the category $\Cob_{G}$. Besides, the monoidal structure of $\Cob_G$ 
induces a \bgm strict \egm monoidal structure on the  quotient category  $\Cob_{G}/\!\sim_H$.

 In addition to the hypothesis~\eqref{eq:R_and_G} on the ring $R$ and the multiplicative subgroup $G \subset R^\times$, 
consider the following condition:
\begin{equation} \label{eq:R_again}
	 \begin{array}{l} 
		\hbox{\it  $R$ is equipped with  a ring homomorphism 
		$ \varepsilon_R~ \colon R \rightarrow \ZZ$  such that $\varepsilon_R(G) = \{1\}$.}
	\end{array} 
\end{equation}

\begin{proposition}  \label{prop:h_cobordism_rel}
 Under the assumption \eqref{eq:R_again},
the functor $\Mag~ \colon  \Cob_{G} \to \pLagr_R$ descends to the quotient $\Cob_{G}/\!\sim_H$.
\end{proposition}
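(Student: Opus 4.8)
I would prove the proposition by showing directly that $\Mag$ identifies homology concordant morphisms; since $\sim_H$ is a congruence on $\Cob_G$ (and the objects are untouched), this is exactly what it means for $\Mag$ to factor through $\Cob_G/\!\sim_H$, functoriality being inherited from Theorem~\ref{th:Magnus}. So let $(M_1,\varphi_1)\sim_H(M_2,\varphi_2)$ via a compact connected oriented $4$-manifold $W$ with $\partial W=M_1\cup_{m_1m_2^{-1}}(-M_2)$, and let $\varphi_W\colon H_1(W)\to G$ be the common homomorphism, so that $\varphi_W\circ i_k=\varphi_k$ (using that $i_k\colon H_1(M_k)\to H_1(W)$ is an isomorphism). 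Because the boundary-parametrizations of $M_1$ and $M_2$ are glued together inside $\partial W$, the composite $F_{g_\pm}\hookrightarrow M_k\hookrightarrow W$ is independent of $k$; let
$$
\mathfrak{W}\colon H^{\varphi_-}_1(F_{g_-},\star)\oplus H^{\varphi_+}_1(F_{g_+},\star)\longrightarrow H^{\varphi_W}_1(W,\star)
$$
be the map induced by these composites (with a sign on the first summand, as in the definition of $\Mag$), so that $\mathfrak{W}=i_k\circ\mathfrak{M}_k$, where $\mathfrak{M}_k$ is the map into $H^{\varphi_k}_1(M_k,\star)$ used to define $\Mag(M_k,\varphi_k)$.

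\textbf{Reduction to a torsion statement.} The crux is the claim that $\ker\!\big(i_k\colon H^{\varphi_k}_1(M_k,\star)\to H^{\varphi_W}_1(W,\star)\big)\subseteq\Tors\,H^{\varphi_k}_1(M_k,\star)$. Granting it, the rest is bookkeeping: one always has $\ker\mathfrak{M}_k\subseteq\ker\mathfrak{W}$, and if $x\in\ker\mathfrak{W}$ then $\mathfrak{M}_k(x)$ is torsion, so $rx\in\ker\mathfrak{M}_k$ for some $r\in R\setminus\{0\}$ and $x\in\cl(\ker\mathfrak{M}_k)$; hence $\cl(\ker\mathfrak{M}_k)=\cl(\ker\mathfrak{W})$ for $k=1,2$, and therefore $\Mag(M_1,\varphi_1)=\cl(\ker\mathfrak{W})=\Mag(M_2,\varphi_2)$ as submodules. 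Since both are morphisms of $\pLagr_R$ between the same pointed skew-Hermitian modules $\Mag(g_\pm,\varphi_\pm)$ (by Theorem~\ref{th:Magnus}; for instance $(\nu_-,\nu_+)\in\ker\mathfrak{M}_k$ by Lemma~\ref{LagrG}, so $(\nu_-/2,\nu_+/2)$ lies in the rationalization of both), equality of the underlying submodules gives equality as morphisms of $\pLagr_R$.

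\textbf{Proof of the claim.} This is where hypothesis~\eqref{eq:R_again} is used. As $i_k$ is an isomorphism on $H_1$, the restriction over $M_k$ of the maximal abelian cover $p_W\colon\widehat{W}\to W$ is the maximal abelian cover $\widehat{M_k}$, so $C^{\varphi_W}(W,M_k)=R\otimes_{\ZZ[H_1(W)]}C(\widehat{W},\widehat{M_k})$; moreover $H_*(W,M_k;\ZZ)=0$ because the $i_k$ are integral homology equivalences, i.e. $C:=C(\widehat{W},\widehat{M_k})$ is a bounded complex of finitely generated free $\ZZ[H_1(W)]$-modules with $\ZZ\otimes_{\ZZ[H_1(W)]}C$ acyclic. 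Localizing $C$ at an arbitrary prime $\mathfrak{q}$ containing the augmentation ideal $I$, and using that a bounded complex of finitely generated free modules over a local ring which becomes acyclic after tensoring with the residue field is itself acyclic, one finds that $\mathrm{Supp}\,H_q(C)\cap V(I)=\varnothing$, i.e. $\Ann(H_q(C))+I=\ZZ[H_1(W)]$ for every $q$. Choose $a\in\Ann(H_q(C))$ with augmentation $1$. Since $\varphi_W(H_1(W))\subseteq G$ and $\varepsilon_R(G)=\{1\}$, the composite $\varepsilon_R\circ\varphi_W$ is the augmentation of $\ZZ[H_1(W)]$, so $\varepsilon_R(\varphi_W(a))=1$; hence $\varphi_W(a)\neq 0$ and, $R$ being a domain, $\varphi_W(a)$ is a unit of $Q:=Q(R)$. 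Then, comparing multiplication by $a$ on $H_q(C)$ (which it kills) and on $Q$ (on which it is invertible) via the hyperhomology spectral sequence $\mathrm{Tor}_p^{\ZZ[H_1(W)]}(H_q(C),Q)\Rightarrow H_{p+q}(Q\otimes_{\ZZ[H_1(W)]}C)$, all $E^2$-terms vanish, so $Q\otimes_R H^{\varphi_W}_*(W,M_k)=H_*(Q\otimes_{\ZZ[H_1(W)]}C)=0$; thus $H^{\varphi_W}_*(W,M_k)$ is $R$-torsion. Finally the long exact sequence of the triple $(W,M_k,\star)$ exhibits $\ker(i_k)$ as a quotient of the torsion module $H^{\varphi_W}_2(W,M_k)$, proving the claim.

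\textbf{Main obstacle.} The one genuinely delicate point is this last homological step: converting the \emph{integral} acyclicity of the pair $(W,M_k)$ into the vanishing of $H^{\varphi_W}_*(W,M_k)\otimes_R Q$. The augmentation $\varepsilon_R$ of \eqref{eq:R_again} is precisely what is needed, as it forces the annihilator element $\varphi_W(a)$ to be nonzero in $R$, hence a unit of $Q$; without such a ring homomorphism $R\to\ZZ$ the argument breaks down. Everything else — the manipulation of closures, the factorization through $W$, and the identification of the distinguished elements — is formal given Lemma~\ref{LagrG} and the fact that $\sim_H$ is a congruence.
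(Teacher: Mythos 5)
Your proof is correct, and the overall architecture matches the paper's: both factor the two boundary maps $\mathfrak{M}_k$ through the four-manifold $W$, reduce to the homological statement that $H^{\varphi_Q}_*(W,M_k)=0$ (equivalently, that $\ker i_k$ is $R$-torsion), and then deduce $\cl(\ker\mathfrak{M}_1)=\cl(\ker\mathfrak{W})=\cl(\ker\mathfrak{M}_2)$ by elementary manipulation of closures. Where you genuinely diverge is in the proof of that homological lemma, which is the paper's Lemma~\ref{lem:KLW}. The paper follows Kirk--Livingston--Wang: it picks a chain contraction $\delta$ of $C(X,Y)$ over $\ZZ$, lifts it to a $\ZZ[H_1(X)]$-linear map $\widehat\delta$ of the covering chain complex, notes that $N:=Q\otimes(\widehat\delta\widehat\partial+\widehat\partial\widehat\delta)$ is a chain map null-homotopic via $\widehat\delta$, and then shows $N$ is invertible because $\varepsilon_R\circ\varphi$ sends $\det(\widehat\delta\widehat\partial+\widehat\partial\widehat\delta)$ to $1$; hypothesis~\eqref{eq:R_again} enters precisely to guarantee this determinant survives in $R$. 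Your argument instead works with the support of $H_q(C)$: localize at primes of $V(I)$ to conclude $\Ann H_q(C)+I=\ZZ[H_1(W)]$, extract $a\in\Ann H_q(C)$ of augmentation $1$, use~\eqref{eq:R_again} to see $\varphi_W(a)$ is nonzero hence a unit of $Q$, and kill the $E^2$-page of the hyperhomology spectral sequence. Both routes are sound and both invoke~\eqref{eq:R_again} at the same logical juncture (to ensure a certain ring element has nonzero image in the domain $R$). The paper's route is shorter and more self-contained --- just a determinant computation once the chain contraction is chosen --- whereas yours is more structural and shows more cleanly where $\ZZ$-acyclicity interacts with the augmentation ideal; on the other hand it requires the ambient group ring to be Noetherian (true here, since $H_1(W)$ is finitely generated, but worth noting explicitly) and leans on the Nakayama-type rigidity of bounded free complexes over a local ring, which you cite but do not prove. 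One small stylistic remark: you actually only need the \emph{injectivity} of $Q\otimes i_k$ (torsion kernel), not the full isomorphism that Lemma~\ref{lem:KLW} delivers; your spectral-sequence argument proves the stronger statement anyway, as does the paper's.
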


\begin{proof}
Let $(M_1, \gvf_1),(M_2,\gvf_2) \in\Cob_{G}((g_{-}, \gvf_{-}), (g_{+}, \gvf_{+}))$ be such that $(M_1, \gvf_1) \sim_H (M_2,\gvf_2)$.
Let $j\in\{1,2\}$.
By  Lemma \ref{lem:KLW} below,
the fact that the map $i_j~ \colon M_j \hookrightarrow W$ induces an isomorphism in ordinary homology implies that 
$$
i_j~ \colon Q(R) \otimes_R H_1^{\gvf_j} (M_j,\star)  \simeq  H_1^{\gvf_{j,Q}}(M_j,\star) \longrightarrow 
H_1^{\gvf_Q}(W,\star) \simeq Q(R) \otimes_R H_1^\chi(W,\star)
$$ 
is an isomorphism where $\chi := \gvf_1 i_1^{-1}= \gvf_2 i_2^{-1}$.
Thus we get the following commutative diagram:
$$
\xymatrix @!0 @R=1cm @C=3cm {
&& Q(R) \otimes_R H_1^{\gvf_1}(M_1,\star)  \ar[drr]^{i_1}_\simeq &&\\
Q(R) \otimes_R H_1^{\gvf_\pm}(F_{g_\pm},\star) \ar[rru]^{m_{1,\pm}} \ar[rrd]_{m_{2,\pm}}  &&  && Q(R) \otimes_R H_1^{\chi}(W,\star)\\
& & Q(R) \otimes_R  H_1^{\gvf_2}(M_2,\star) \ar[rru]_{i_2}^\simeq   & &
}
$$
It follows from this diagram that 
\begin{eqnarray*}
&& \ker\big((-m_{1,-}) \oplus m_{1,+}\big) \subset \big((-m_{2,-}) \oplus m_{2,+}\big)^{-1}(\Tors_R H_1^{\gvf_2}(M_2,\star)) \\
\hbox{and}  &&   \ker\big((-m_{2,-}) \oplus m_{2,+}\big) \subset \big((-m_{1,-}) \oplus m_{1,+}\big)^{-1}( \Tors_R H_1^{\gvf_1}(M_1,\star)),
\end{eqnarray*} 
which immediately implies that
\begin{eqnarray*}
&& \ker\big((-m_{1,-}) \oplus m_{1,+}\big) \subset \operatorname{cl}\big(\ker\big((-m_{2,-}) \oplus m_{2,+}\big)\big)  \\
\hbox{and}  &&   \ker\big((-m_{2,-}) \oplus m_{2,+}\big) \subset \operatorname{cl}\big(\ker\big((-m_{1,-}) \oplus m_{1,+}\big)\big).
\end{eqnarray*}
We conclude that  $\Mag(M_1, \gvf_1) = \Mag(M_2, \gvf_2)$.
\end{proof}

\begin{lemma} \label{lem:KLW}
 Assume \eqref{eq:R_again}. 
Let $(X,Y)$ be a pair of CW-complexes such that  the inclusion map $i~ \colon Y \to X$ induces an isomorphism at the level of $H(\cdot \, ;\ZZ)$. For any group homomorphism $\varphi~ \colon H_1(X) \to G$, we have $H^{\varphi_Q}(X,Y)=0$ where $\varphi_Q~ \colon \ZZ[H_1(X)] \to Q(R)$ is the ring homomorphism induced by $\varphi$.
\end{lemma}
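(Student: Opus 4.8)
The plan is to reinterpret $H^{\varphi_Q}(X,Y)$ as the homology of a chain complex over a suitable localization of $\Lambda := \ZZ[H_1(X)]$, and to use hypothesis \eqref{eq:R_again} to reduce the vanishing to the untwisted statement $H_\ast(X,Y;\ZZ)=0$ — which is immediate, since the hypothesis that $i\colon Y\to X$ is a homology isomorphism makes the long exact sequence of the pair collapse. Concretely, write $C_\ast := C\big(\widehat X, p_X^{-1}(Y)\big)$, so that by definition $H^{\varphi_Q}(X,Y) = H\big(Q(R)\otimes_\Lambda C_\ast\big)$, with $\Lambda$ acting on $Q(R)$ through $\varphi_Q$. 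Since in all the applications $X$ is (homotopy equivalent to) a finite CW-complex, I would assume from the outset that $C_\ast$ is a \emph{bounded} complex of \emph{finitely generated free} $\Lambda$-modules; this finiteness will be needed. Note that $\ZZ\otimes_\Lambda C_\ast \cong C(X,Y;\ZZ)$, which is acyclic.

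The key step is to bring in the augmentation ideal $I := \ker(\varepsilon\colon\Lambda\to\ZZ)$. Since $\Lambda/I\cong\ZZ$ is a domain, $I$ is prime, so the localization $\Lambda_I$ is a local ring; its maximal ideal is $I\Lambda_I$ and, as $\varepsilon$ maps $\Lambda\setminus I$ onto $\ZZ\setminus\{0\}$, its residue field is $\QQ$. I would then observe that $\varphi_Q$ factors through $\Lambda\to\Lambda_I$. Indeed \eqref{eq:R_again} forces $\varepsilon_R\circ\varphi=\varepsilon$ as ring homomorphisms $\Lambda\to\ZZ$ (both send every element of $H_1(X)$ to $1$); hence for $s\in\Lambda\setminus I$ we have $\varepsilon_R(\varphi(s))=\varepsilon(s)\neq0$, so $\varphi(s)\neq0$ in $R$ and $\varphi_Q(s)$ is a unit of the field $Q(R)$. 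The universal property of localization then produces the desired factorization $\Lambda\to\Lambda_I\to Q(R)$. It is precisely (and only) here that \eqref{eq:R_again} is used.

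With this in hand, set $D_\ast := \Lambda_I\otimes_\Lambda C_\ast$, a bounded complex of finitely generated free modules over the local ring $\Lambda_I$. Reducing modulo the maximal ideal gives $\QQ\otimes_{\Lambda_I}D_\ast\cong\QQ\otimes_\Lambda C_\ast\cong\QQ\otimes_\ZZ C(X,Y;\ZZ)$, which is acyclic. Now I would invoke the standard ``minimal complex'' fact (in the spirit of \cite{KiLiWa01}): a bounded complex of finitely generated free modules over a local ring splits as the direct sum of a contractible complex and a complex all of whose differentials have entries in the maximal ideal; reducing mod the maximal ideal kills the differentials of the second summand, so its acyclicity forces it (hence each of its free modules, by Nakayama) to vanish. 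Therefore $D_\ast$ is contractible, and since $\varphi_Q$ factors through $\Lambda_I$ we get $Q(R)\otimes_\Lambda C_\ast\cong Q(R)\otimes_{\Lambda_I}D_\ast$, which is then contractible as well; hence $H^{\varphi_Q}(X,Y)=0$.

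The only genuine subtlety I anticipate is the finiteness assumption: the ``minimal complex'' argument (equivalently, Nakayama) fails for infinite complexes — over a local ring $A$ with $\mathfrak m\neq0$ one can present the fraction field as the cokernel of an infinite-rank free map that is an isomorphism modulo $\mathfrak m$ — so the statement really needs $X$ to be finitely dominated, as it is throughout the paper. Apart from that, the proof is routine change-of-rings bookkeeping; the entire conceptual content lies in the factorization of $\varphi_Q$ through $\Lambda_I$ afforded by \eqref{eq:R_again}.
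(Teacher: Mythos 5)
Your proof is correct, and it takes a genuinely different route from the one in the paper. The paper follows Kirk--Livingston--Wang quite literally: it identifies $C(\widehat X,p_X^{-1}(Y))$ with $\Lambda\otimes_\ZZ C(X,Y)$, lifts a $\ZZ$-linear chain contraction $\delta$ of the acyclic complex $C(X,Y;\ZZ)$ to a $\Lambda$-linear degree-one map $\widehat\delta$, and considers the chain endomorphism $N:=Q(R)\otimes_\Lambda(\widehat\delta\widehat\partial+\widehat\partial\widehat\delta)$ of $C^{\varphi_Q}(X,Y)$. On one hand $N$ is null-homotopic by construction; on the other hand, hypothesis~\eqref{eq:R_again} forces $\varepsilon_R\circ\varphi=\varepsilon$, so $\varepsilon_R\bigl(\det(R\otimes_\Lambda(\widehat\delta\widehat\partial+\widehat\partial\widehat\delta))\bigr)=\varepsilon\det(\widehat\delta\widehat\partial+\widehat\partial\widehat\delta)=\det(\delta\partial+\partial\delta)=1$, so $\det N\neq0$ in $Q(R)$ and $N$ is a chain isomorphism. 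A null-homotopic isomorphism can only exist on a contractible complex, so $H^{\varphi_Q}(X,Y)=0$. Your argument instead localizes $\Lambda$ at the augmentation ideal $I$ and invokes the minimal-complex/Nakayama principle over the local ring $\Lambda_I$; the only role of~\eqref{eq:R_again} in your version is to guarantee that $\varphi_Q$ inverts $\Lambda\setminus I$ and hence factors through $\Lambda_I$. The two uses of~\eqref{eq:R_again} are equivalent in content (both rest on $\varepsilon_R\circ\varphi=\varepsilon$), and both proofs tacitly require $C(X,Y)$ to be a bounded complex of finitely generated free modules — the paper for its determinants to exist, yours for Nakayama to apply — a point you rightly flag. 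The paper's proof is more elementary and self-contained; yours is more conceptual and, as a bonus, shows that $\Lambda_I\otimes_\Lambda C(\widehat X,p_X^{-1}(Y))$ is already contractible, so the conclusion holds over any $\Lambda$-algebra whose structure map factors through $\Lambda_I$, not just over $Q(R)$.
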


\begin{proof}
We closely follow  the arguments of \cite[Proposition 2.1]{KiLiWa01}.
Since the cell chain complex $ C(X,Y) $ with coefficients in $\ZZ$ is acyclic and free, we can find a chain contraction $\delta~ \colon C(X,Y) \to C(X,Y)$, i.e$.$ a degree $1$ map of graded $\ZZ$-modules satisfying
$\delta \partial + \partial \delta = \Id$. Let $p_X~ \colon \widehat{X} \to X$ be the maximal abelian cover of $X$ and choose, for every relative cell of $(X,Y)$, an orientation and a lift to $\widehat{X}$. We also order the relative cells of $(X,Y)$ in an arbitrary way. All those choices define a $\ZZ$-basis of $C(X,Y)$ and a $\ZZ[H_1(X)]$-basis of  $C(\widehat{X},p_X^{-1}(Y))$, and there is a one-to-one correspondence between these two basis: thus we can identify $C(\widehat{X},p_X^{-1}(Y))$ with ${\ZZ[H_1(X)]  \otimes_\ZZ C(X,Y)}$. So, $\delta$ extends to a  $\ZZ[H_1(X)]$-linear map $\widehat{\delta}~ \colon C(\widehat{X},p_X^{-1}(Y))  \to C(\widehat{X},p_X^{-1}(Y))$ of degree $1$. The map 
$$
N:= Q(R) \otimes_{\ZZ[H_1(X)]}   \left(\widehat{\delta} \widehat{\partial} + \widehat{\partial} \widehat{\delta}\right)~ \colon 
C^{\varphi_Q}(X,Y) \longrightarrow C^{\varphi_Q}(X,Y) 
$$
is clearly a chain map, which induces the null map at the level of homology. On the other hand, we have
$$
\varepsilon_R\,  \det\big( R \otimes_{\ZZ[H_1(X)]}   (\widehat{\delta} \widehat{\partial} + \widehat{\partial} \widehat{\delta})\big)
= \varepsilon_R\, \varphi \det\big(    \widehat{\delta} \widehat{\partial} + \widehat{\partial} \widehat{\delta} \big)
= \varepsilon   \det\big(    \widehat{\delta} \widehat{\partial} + \widehat{\partial} \widehat{\delta}\big)
= 1
$$ 
where $\varepsilon~ \colon \ZZ[H_1(X)]\to \ZZ$ denotes the augmentation of the group ring $ \ZZ[H_1(X)]$. The above identity implies that $\det(N)\neq 0$. We conclude that $N$ is an isomorphism of chain complexes and that $C^{\varphi_Q}(X,Y)$ is acyclic.
\end{proof}

The second property of the Magnus functor to be shown is the monoidality.

\begin{proposition}  \label{prop:monoidality}
\bgm $\Mag~ \colon \Cob{}_G \rightarrow \pLagr_R$ is  a strong monoidal functor.\egm
\end{proposition}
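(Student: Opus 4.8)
The plan is to regard both $\Cob_G$ and $\pLagr_R$ as strict monoidal (Remark \ref{rem:not_strict}) and to exhibit $\Mag$ together with a unit isomorphism and structure isomorphisms $\theta_{(g,\gvf),(h,\psi)}\colon \Mag(g,\gvf)\boxtimes\Mag(h,\psi)\to\Mag\big((g,\gvf)\boxtimes(h,\psi)\big)=\Mag(g+h,\gvf\oplus\psi)$ in $\pLagr_R$, subject to naturality and the coherence axioms. The unit is easy: since the model surface $F_0$ is a disk, $H^{\gvf}_1(F_0,\star)=R\otimes_\ZZ H_1(F_0,\star)=0$, so $\Mag(0,\gvf)=(\{0\},0,0)=\mathcal I$ is literally the unit object of $\pLagr_R$ and the unit constraint of $\Mag$ may be taken to be the identity. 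Thus the whole content is the construction of $\theta$, its naturality, and the associativity and unit coherences.

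First I would build $\theta$ on objects. Writing $F_{g+h}=F_g\cup_J F_h$ as a boundary-connected sum glued along the contractible arc $J=F_g\cap F_h$ (to which $\star$ is connected by a path), the Mayer--Vietoris theorem for twisted homology yields an $R$-module isomorphism $\theta\colon H^{\gvf}_1(F_g,\star)\oplus H^{\psi}_1(F_h,\star)\xrightarrow{\ \simeq\ }H^{\gvf\oplus\psi}_1(F_{g+h},\star)$ induced by the inclusions; equivalently, juxtaposing a system of meridians and parallels of $F_g$ with one of $F_h$ produces such a system of $F_{g+h}$, giving $\theta$ on the level of bases. The key claim is that $\theta$ is a morphism of \emph{pointed} skew-Hermitian modules, i.e.\ it carries the skew direct sum $\SSempty\, {{}_{\nu_g/2}\oplus_{\nu_h/2}}\, \SSempty$ to $\SSempty$ of $F_{g+h}$, and $\nu_g/2+\nu_h/2$ to $\nu_{g+h}/2$. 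For the distinguished element this is immediate: the boundary loop $\nu_{g+h}$ is freely the concatenation $\nu_g\ast\nu_h$ of the boundary loops of $F_g$ and $F_h$, and since $\nu_g$ is a product of commutators and $G$ is abelian we have $\gvf(\nu_g)=1$, so the lift $\widehat\nu_{g+h}$ is $\widehat\nu_g$ followed by an \emph{untranslated} copy of $\widehat\nu_h$, whence $[1\otimes\widehat\nu_{g+h}]=[1\otimes\widehat\nu_g]+[1\otimes\widehat\nu_h]$. For the form I would invoke the matrix presentation of Lemma \ref{lem:matrix_S} for the juxtaposed system: its two diagonal blocks are exactly $S^{\gvf}$ and $S^{\psi}$, while an off-diagonal entry between an $F_g$-generator $e$ and an $F_h$-generator $f$ equals, up to sign, $P^{\gvf\oplus\psi}(e,f)$ or $Q^{\gvf\oplus\psi}(e,f)$, which one rewrites using \eqref{eq:boundary_curve_bis} (so that $\SSform{x}{\nu/2}=\partial_*(x)$) together with $\partial_*(a^{\gvf}_i)=\gvf(\ga_i)-1$ and $\partial_*(b^{\gvf}_i)=\gvf(\gb_i)-1$, to recognize precisely the cross-terms $\SSform{e}{\nu_g/2}\,\SSform{\nu_h/2}{f}$ and $-\SSform{\nu_g/2}{e}\,\SSform{f}{\nu_h/2}$ occurring in the definition of the skew direct sum. (Alternatively this identity can be derived directly from \eqref{eq:def-eta} on a boundary-connected sum using properties \eqref{eq:left}--\eqref{eq:boundary_curve}.) Consequently $\theta$ lies in $\pUni$, hence, via the graph embedding $\pUni\hookrightarrow\pLagr_R$, is an isomorphism in $\pLagr_R$.

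Next I would verify that $\theta$ is natural, which is the same statement as the monoidality of $\Mag$ on morphisms. Given cobordisms $(M,\gvf)$ and $(N,\psi)$, the tensor product $M\sharp_\partial N$ is obtained by gluing $M$ and $N$ along a disk in the vertical boundary, so Mayer--Vietoris (with twisted coefficients) gives $H^{\gvf\oplus\psi}_1(M\sharp_\partial N,\star)\simeq H^{\gvf}_1(M,\star)\oplus H^{\psi}_1(N,\star)$, compatibly through the inclusions with the isomorphisms $\theta$ on the incoming and outgoing surfaces. Under these identifications the map $(-m_-)\oplus m_+$ attached to $M\sharp_\partial N$ becomes block-diagonal, namely the direct sum of the maps attached to $M$ and to $N$; its kernel is therefore the direct sum of the two kernels, and since $R$ has no zero-divisors the closure in $V\oplus W$ of $A\oplus B$ equals $\operatorname{cl}(A)\oplus\operatorname{cl}(B)$ for submodules of torsion-free modules. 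Hence, after transport along $\theta$, $\Mag(M\sharp_\partial N,\gvf\oplus\psi)=\Mag(M,\gvf)\oplus\Mag(N,\psi)=\Mag(M,\gvf)\boxtimes\Mag(N,\psi)$, recalling that $\boxtimes$ on morphisms of $\pLagr_R$ is $\oplus$ (Proposition \ref{prop:mono}); since composing a Lagrangian relation with the graph of a unitary isomorphism is a mere transport, this is exactly the naturality square for $\theta$.

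Finally, the coherence axioms. Because $\pLagr_R$ is treated as strict and the skew direct sum of pointed forms is \emph{strictly} associative --- this was shown in the proof of Proposition \ref{prop:mono}, where $\rho\, {{}_{s}\oplus_{s'+s''}} (\rho'\, {{}_{s'}\oplus_{s''}} \rho'')=(\rho\, {{}_{s}\oplus_{s'}} \rho')\, {{}_{s+s'}\oplus_{s''}}\rho''$ was verified --- and because the Mayer--Vietoris identifications for the iterated boundary-connected sum $F_g\sharp_\partial F_h\sharp_\partial F_k$ are canonically associative, the two composites of $\theta$'s describing $\Mag(g+h+k,-)$ coincide, so the associativity hexagon for $\Mag$ commutes; the unit triangles hold since $\Mag$ preserves the unit on the nose and $\theta$ for $F_0\sharp_\partial F_g=F_g$ is the canonical identification. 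This proves that $\Mag$ is strong monoidal. I expect the main obstacle to be the object-level claim of the second paragraph: identifying the $\gvf\oplus\psi$-twisted intersection form of a boundary-connected sum with the skew direct sum of the two twisted intersection forms. This is precisely where the nonstandard cross-terms built into $\boxtimes$ on $\pLagr_R$ are forced, and carrying it out demands careful bookkeeping of the effect of the new boundary curve $\nu_{g+h}=\nu_g\nu_h$ --- in particular, pinning down the correct ordering of the two summands and the exact shape of the cross-terms --- whether one argues through Lemma \ref{lem:matrix_S} or directly through the homotopy intersection pairing $\gl$.
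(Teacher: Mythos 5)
Your proof is correct and follows essentially the same route as the paper's: construct the structure morphism $\theta$ (the paper's $T$) from the inclusions of $F_g$ and $F_h$ into $F_{g+h}$, verify it lies in $\pUni$, push it to $\pLagr_R$ via the graph functor, check naturality through the Mayer--Vietoris decomposition of $H^{\gvf\oplus\psi}_1(M\sharp_\partial N,\star)$, and observe that coherence holds because the constraints of $\pLagr_R$ themselves come from $\pUni$. The one point of genuine variance is the unitarity check for $\theta$ on objects: the paper expands $\SSform{x'+z'}{y'+t'}^{(g+h)}$ by sesquilinearity and skew-symmetry, rewrites each cross-term through \eqref{eq:form_s} as $2\form{\cdot}{\cdot}^{(g+h)}-\partial_*(\cdot)\overline{\partial_*(\cdot)}$, and notes geometrically that $\form{\cdot}{\cdot}^{(g+h)}$ vanishes on $F_g$-vs-$F_h$ pairs, whereas you propose to read everything off the explicit matrix of Lemma~\ref{lem:matrix_S} in the juxtaposed meridian/parallel basis. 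Either route works; the matrix route requires you to appeal to bilinearity afterward to pass from a basis check to a full identity, while the paper's route applies to arbitrary classes directly. Two small slips in your write-up: the cross-blocks of $S^{\gvf\oplus\psi}$ between $F_g$- and $F_h$-generators contain only $P^{\gvf\oplus\psi}$ entries, never $Q^{\gvf\oplus\psi}$ ones, since $Q$-entries occur only at positions $(a_i,b_i)$ with the \emph{same} index $i$, impossible for one generator in $F_g$ and the other in $F_h$; and the reason the lift of $\nu_h$ in $\widehat{F_{g+h}}$ is untranslated is that $[\nu_g]=0$ in $H_1(F_{g+h})$ (the maximal abelian cover sees $H_1$, not $G$), not the weaker fact $\gvf(\nu_g)=1$ --- the latter would still give the desired identity of chains in $R\otimes_{\ZZ[H_1]}C(\widehat{F_{g+h}},\cdot)$, but is not the cover-theoretic statement you make, and in any case $[\nu_g]=0$ holds trivially.
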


\begin{proof}
Recall that $\Cob_G$ and $\pLagr_R$ are viewed as strict monoidal categories (see Remark \ref{rem:not_strict}). We first define a natural transformation $T$ between the functors  $\Mag(\cdot) \boxtimes \Mag(\cdot) $ and $\Mag(\cdot \boxtimes \cdot)$. For any object $(k,\kappa)$ of the category $\Cob_G$, we denote $\Mag(k,\kappa) = \big( H^{\kappa}_1(F_k, \star), \SSempty^{(k)}, \nu^{(k)}/2 \big)$.
Thus, for any objects $(g, \gvf)$ and $(h, \psi)$ of the category $\Cob_G$, we have 
\begin{eqnarray*}
\Mag(g,\gvf) \boxtimes \Mag(h,\psi) =  
\left( H^{\gvf}_1(F_g, \star) \oplus H^{\psi}_1(F_h, \star)\, ,\,   \SSempty^{(g)}\, {}_{\nu^{(g)}/2}\! \oplus_{\nu^{(h)}/2}  \SSempty^{(h)}\, ,\,  \nu^{(g)}/2 +  \nu^{(h)}/2\right) 
\end{eqnarray*}
and 
$$
\Mag\big((g,\gvf)\boxtimes(h,\psi)\big) = \big( H^{\gvf\oplus\psi}_1(F_{g+h}, \star), \SSempty^{(g+h)}, \nu^{(g+h)}/2 \big).
$$
We claim that the $R$-linear map $H^{\gvf}_1(F_g, \star) \oplus H^{\psi}_1(F_h, \star) \to H^{\gvf\oplus\psi}_1(F_{g+h}, \star)$ induced by the inclusions
of $F_g$ and $F_h$ in $F_g \sharp_\partial F_h = F_{g+h}$ defines an isomorphism
$$
T_{(g,\gvf),(h,\psi)}~ \colon \Mag(g,\gvf) \boxtimes \Mag(h,\psi) \longrightarrow \Mag\big((g,\gvf)\boxtimes(h,\psi)\big)
$$
 in the category $\pUni$. Clearly, $T_{(g,\gvf),(h,\psi)}$
 maps $\nu^{(g)}/2+ \nu^{(h)}/2$ to $\nu^{(g+h)}/2$ since the boundary curve of $F_g \sharp_\partial F_h$ is the connected sum of the boundary curves of $F_g$ and $F_h$.
Thus the claim reduces to proving that $T_{(g,\gvf),(h,\psi)}$ is unitary.
Let  $x, y \in H^{\gvf}_1(F_g, \star)$ and $z,t \in H^{\psi}_1(F_h, \star)$, and denote by $x',y',z',t'$ their images in $H^{\gvf\oplus\psi}_1(F_{g+h}, \star)$: we have 
\begin{eqnarray*} 
&&	\big(\SSform{\cdot}{\cdot}^{(g)} \, {{}_{\nu^{(g)}/2}\oplus_{\nu^{(h)}/2}} \SSform{\cdot}{\cdot}^{(h)} \big) (x+z,y+t)\\
	&=& \SSform{x}{y}^{(g)} + \SSform{z}{t}^{(h)} + \SSform{x}{\nu^{(g)}/2}^{(g)}\, \SSform{\nu^{(h)}/2}{t}^{(h)} - \SSform{\nu^{(g)}/2}{y}^{(g)}\, \SSform{z}{\nu^{(h)}/2}^{(h)} \\
	& \by{eq:boundary_curve_bis} & \SSform{x}{y}^{(g)} + \SSform{z}{t}^{(h)} - \dep_{*}(x)\, \overline{\dep_{*}(t)} + \overline{\dep_*(y)}\, \dep_{*}(z)
\end{eqnarray*}
and 
\begin{eqnarray*}
	&& \SSform{x'+z'}{y'+t'}^{(g+h)} \\
	 &=& \SSform{x}{y}^{(g)} + \SSform{z}{t}^{(h)} + \SSform{x'}{t'}^{(g+h)} - \overline{\SSform{y'}{z'}^{(g+h)}} \\
	& \by{eq:form_s}  &  \SSform{x}{y}^{(g)} + \SSform{z}{t}^{(h)}  
	+ \big( 2 \form{x'}{t'}^{(g+h)}   - \dep_{*}(x')\, \overline{\dep_{*}(t')}  \big)-  \overline{ \big( 2 \form{y'}{z'}^{(g+h)}  - \dep_{*}(y')\, \overline{\dep_{*}(z')}  \big)} \\
	&=  & \SSform{x}{y}^{(g)} + \SSform{z}{t}^{(h)} - \dep_{*}(x) \overline{\dep_{*}(t)} + \overline{\dep_{*}(y)} \dep_{*}(z)
\end{eqnarray*}
which proves our claim about $T_{(g,\gvf),(h,\psi)}$. By applying  the ``graph'' functor $\pUni \to \pLagr_R$, we obtain an isomorphism 
$T_{(g,\gvf),(h,\psi)}~ \colon \Mag(g,\gvf) \boxtimes \Mag(h,\psi) \rightarrow \Mag\big((g,\gvf)\boxtimes(h,\psi)\big)$ in the category $\pLagr$.
This isomorphism is natural since, for any 
$$
(M,\gvf) \in \Cob_G\big((g_-,\gvf_-), (g_+,\gvf_+)\big) \quad \hbox{and} \quad (N,\psi) \in \Cob_G\big((h_-,\psi_-), (h_+,\psi_+)\big),
$$
we have
\begin{eqnarray*}
&& T^{-1}_{(g_+,\gvf_+),(h_+,\psi_+)} \circ \Mag\big( (M,\gvf) \boxtimes  (N,\psi) \big) \circ T_{(g_-,\gvf_-),(h_-,\psi_-)} \\
&=& \cl \left(\ker \big((-m_- \oplus -n_-) \oplus (m_+ \oplus n_+)\big) \right) \\
& \simeq  & \cl \left(\ker \big((-m_- \oplus m_+) \oplus (-n_- \oplus n_+)\big) \right) \\
&= & \cl \big(\ker (-m_- \oplus m_+) \oplus \ker (-n_- \oplus n_+) \big) \\
&=&  \cl \big(\ker (-m_- \oplus m_+) \big) \oplus \cl \big(\ker (-n_- \oplus n_+) \big) \ = \ \Mag(M,\gvf) \boxtimes \Mag(N,\psi).
\end{eqnarray*}
\bgm Recall that the unit object of $\Cob_G$ is the pair $I$ consisting of the integer $0$ 
and the trivial group homomorphism $H_1(F_0) \to G$,  and that \egm
the unit object of $\pLagr_R$ is $\mathcal{I}:= (\{0\},0,0)$. Thus we have $\Mag(I) = (H_1(F_0 ,\star), 0,0)=\mathcal{I}$,
and we define $U~ \colon I \to \Mag(I)$ to be the identity.
It is immediately checked  that the morphism $U$ and the natural transformation $T$ 
satisfy the coherence conditions that make $\Mag$ into a strong monoidal functor.
\end{proof}

\begin{remark}
Proposition \ref{prop:h_cobordism_rel} is the analogue of the invariance under concordance of tangles which Cimasoni and Turaev mentioned for their functor \cite[end of \S 3.3]{CiTu05I}. 
The tangle analogue of Proposition~\ref{prop:monoidality} does not seem to have been addressed in \cite{CiTu05I}.
\end{remark}

% ***************************************************************************
% ***************************************************************************
% *************************************************************************** 

\section{Examples and computations} \label{Applications} 

 We give examples and  explain how to compute the Magnus functor using Heegaard splittings.

% ***************************************************************************
% ***************************************************************************

\subsection{The Magnus representation} \label{subsec:homology_cobordisms}

Fix an integer $g \geq 1$. A {\itshape homology cobordism} over $F_g$ is a morphism $M\in\Cob(g,g)$ such that $m_{\pm}~ \colon H_1(F_g) \rightarrow H_1(M)$ are isomorphisms. The set of equivalence classes of homology cobordisms defines  a submonoid $\HCob(F_g) \subset \Cob(g, g)$.

Let $G$ be an abelian group and fix a group homomorphism $\gvf~ \colon H_1(F_g) \rightarrow G$. Here we will only consider those $M \in \HCob(F_g)$ such that the composition
$$ H_1(F_g) \mathop{\longrightarrow}^{m_{-}}_\simeq H_1(M) \mathop{\longrightarrow}^{m^{-1}_{+}}_\simeq H_1(F_g) \overset{\gvf}{\longrightarrow} G $$
coincides with $\gvf$. By equipping any such cobordism $M$ with  the group homomorphism $\gvf:= \gvf\, m_+^{-1} =\gvf\, m_-^{-1}$, we define a  submonoid $\HCob^{\gvf}(F_g) \subset \Cob_G\big( (g, \gvf), (g, \gvf) \big)$.

Assume now that $G$ is a multiplicative subgroup of a commutative ring $R$, satisfying~\eqref{eq:R_and_G} and~\eqref{eq:R_again}. 
Let $Q := Q(R)$ be the field of fractions of $R$, and  denote by $\varphi_Q~ \colon \ZZ[H_1(F_g)] \to Q$  the ring homomorphism  induced by $\gvf~ \colon H_1(F_g)\to G$. For any $M \in \HCob^\varphi(F_g)$, the fact that  $m_{\pm}~ \colon H_1(F_g) \rightarrow H_1(M)$ is an isomorphism of abelian groups implies that $m_{\pm}~ \colon H^{\gvf_Q}_1 ( F_g, \star ) \rightarrow H^{\gvf_Q}_1 ( M, \star)$  is an isomorphism of $Q$-vector spaces (see Lemma~\ref{lem:KLW}). Thus,  $r^{\gvf}(M) := {(m_{+})^{-1} \, \circ \, m_{-}}$ is an automorphism of $H^{\gvf_Q}_1 ( F_g, \star)$. The \emph{Magnus representation} is the monoid homomorphism 
$$ r^{\gvf}~ \colon \HCob^{\gvf}(F_g) \longrightarrow \Aut_Q \big( H_1^{\gvf_Q} ( F_g, \star ) \big).$$
The reader is referred to \cite{Sa12} for a survey of this invariant of homology cobordisms. 

We now describe the group of units of the monoid  $\HCob^{\gvf}(F_g)$. Let $\mcg(F_g)$ be the \emph{mapping class group} of the surface $F_g$, 
which consists of the isotopy classes of self-homeomorphisms of~$F_g$ fixing $\partial F_g$  pointwise. The \emph{mapping cylinder} construction, which associates to any such homeomorphism $f$ the cobordism
$$ \bfc(f):= \big(F_g \times [-1,1], (f \times \{-1\}) \cup (\partial F_g \times \Id) \cup (\Id\times \{1\}) \big) $$
defines a monoid homomorphism $\bfc~ \colon \mcg(F_g) \to \HCob(F_g)$. It is well-known that $\bfc$ is injective and that its image is the group of units of $\HCob(F_g)$: see \cite[\S 2.2]{HaMa12}, for instance. Thus the  subgroup  
$$ \mcg^{\gvf}(F_g) := \big\lbrace f \in \mcg(F_g) : \varphi f = \varphi \in \Hom(H_1(F_g), G)  \big\rbrace $$
of the mapping class group is mapped by~$\bfc$ isomorphically onto the group of units  of $\HCob^\gvf(F_g)$. Classically, the \emph{Magnus representation} refers to the group homomorphism
$$ r^{\gvf}~ \colon \mcg^{\gvf}(F_g) \longrightarrow \Aut_R\big( H^{\gvf}_1 ( F_g, \star ) \big)$$
which maps any $f \in \mcg^{\gvf}(F_g)$ to the isomorphism of $R$-modules $f~ \colon H^{\gvf}_1(F_g, \star) \rightarrow H^{\gvf}_1(F_g, \star)$. Thus we have the following commutative diagram:
\begin{center}
	\iftikziii
	\begin{tikzcd}[]
		\HCob^{\gvf}(F_g) \arrow[r, "r^{\gvf}"] & \Aut_Q \big(H^{\gvf_Q}_1(F_g, \star)\big) \\
		\mcg^{\gvf}(F_g) \arrow[u,"\bfc"] \arrow[r, "r^{\gvf}"] & \Aut_R \big(H^{\gvf}_1(F_g,\star)\big) \arrow[u,"Q \otimes_R (\cdot )"] \\
	\end{tikzcd}
	\else
	\includegraphics{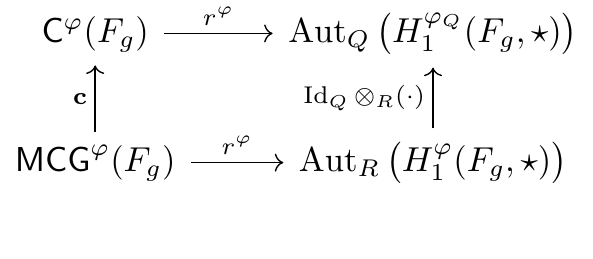}
	\fi
\end{center} \up

The next proposition says that the restriction of the functor $\Mag := \Mag_{R,G}$ to the submonoid $\HCob^{\gvf}(F_g)$ of homology cobordisms is equivalent to the Magnus representation $r^{\gvf}$.

\begin{proposition} \label{prop:magnus}
	Let $g\geq 1$ be an integer and let $\gvf~ \colon H_1(F_g) \to G$ be a group homomorphism.
	Under the assumptions \eqref{eq:R_and_G} and \eqref{eq:R_again} on $R$ and $G$, we have the commutative diagram
	\begin{center}
		\iftikziii
		\begin{tikzcd}[]
			\Cob_G\big((g,\gvf),(g,\gvf)\big) \arrow[r, "\Mag"] & \pLagr_R\big(\Mag(g,\gvf), \Mag(g,\gvf)\big) \\
			\HCob^{\gvf}(F_g) \arrow[r, "r^{\gvf}"] \arrow[u, hook] & \pUniQ\big(\Mag(g,\gvf),\Mag(g,\gvf)\big) \arrow[u, hook] 
		\end{tikzcd}
		\else
		\includegraphics{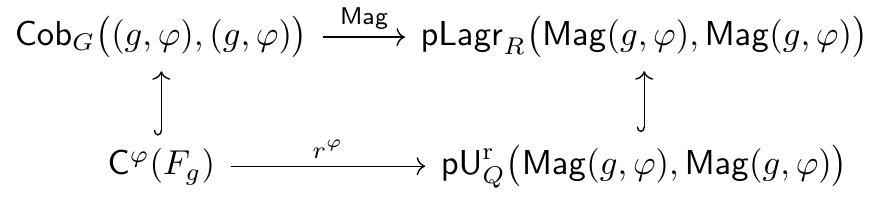}
		\fi
	\end{center}
	where $\Mag(g,\gvf)$ is the pointed skew-Hermitian $R$-module $\big(H^{\gvf}_1 (F_g,\star), \SSempty,  \nu/2  \big)$.
\end{proposition}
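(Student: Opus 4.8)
The diagram is a square of monoid homomorphisms whose vertical maps are the obvious inclusion $\HCob^\gvf(F_g)\hookrightarrow\Cob_G\big((g,\gvf),(g,\gvf)\big)$ and the ``restricted graph'' embedding $\pUniQ\hookrightarrow\pLagr_R$ of \cite[Theorem 2.9]{CiTu05I}. Its commutativity therefore amounts to two assertions: that for every $M\in\HCob^\gvf(F_g)$ the automorphism $r^\gvf(M)=(m_+)^{-1}\circ m_-$ of $H_1^{\gvf_Q}(F_g,\star)$ lies in $\pUniQ\big(\Mag(g,\gvf),\Mag(g,\gvf)\big)$, and that its restricted graph $\gG^{\operatorname{r}}_{r^\gvf(M)}$ coincides with the pointed Lagrangian relation $\Mag(M,\gvf)$ inside $H_1^\gvf(F_g,\star)\oplus H_1^\gvf(F_g,\star)$. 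The plan is to establish both by passing to the rationalization.

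First I would recall, as already used in the proof of Proposition \ref{prop:h_cobordism_rel}, that $r^\gvf(M)$ is a well-defined $Q$-linear automorphism: applying Lemma \ref{lem:KLW} to the pairs $(M,\partial_\pm M)$ --- whose inclusions induce isomorphisms on integral homology since $M$ is a homology cobordism --- gives $H^{\gvf_Q}(M,\partial_\pm M)=0$, so $m_\pm\colon H_1^{\gvf_Q}(F_g,\star)\to H_1^{\gvf_Q}(M,\star)$ are isomorphisms. Next I would compute the rationalization of $\Mag(M,\gvf)$. Because $Q$ is a localization of $R$, it is flat, so $Q\otimes_R\ker\big((-m_-)\oplus m_+\big)=\ker\big((-m_-)\oplus m_+\colon H_1^{\gvf_Q}(F_g,\star)^{\oplus 2}\to H_1^{\gvf_Q}(M,\star)\big)$; moreover $\cl(A)/A$ is a torsion $R$-module for any submodule $A$, so taking closures does not change rationalizations. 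Hence
$$
Q\otimes_R\Mag(M,\gvf)\;=\;\big\{(h_1,h_2)\,:\,m_+(h_2)=m_-(h_1)\big\}\;=\;\big\{(h_1,h_2)\,:\,h_2=r^\gvf(M)(h_1)\big\}\;=\;Q\otimes_R\gG^{\operatorname{r}}_{r^\gvf(M)}.
$$
Since $\Mag(M,\gvf)$ is isotropic for $(-\SSempty)\oplus\SSempty$ (it is even Lagrangian, by Theorem \ref{th:Magnus}), so is its rationalization; but the graph of a $Q$-linear isomorphism is isotropic for $(-\pai_1)\oplus\pai_2$ precisely when that isomorphism is unitary, so $r^\gvf(M)$ is unitary. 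It also fixes $\nu/2$, because $(\nu/2,\nu/2)=(1/2)(\nu,\nu)$ belongs to $Q\otimes_R\Mag(M,\gvf)$ by Lemma \ref{LagrG}. Thus $r^\gvf(M)\in\pUniQ\big(\Mag(g,\gvf),\Mag(g,\gvf)\big)$.

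It then remains to upgrade the equality of rationalizations to an equality of submodules of the free $R$-module $H_1^\gvf(F_g,\star)\oplus H_1^\gvf(F_g,\star)$. Both $\Mag(M,\gvf)$ and $\gG^{\operatorname{r}}_{r^\gvf(M)}$ are closed (saturated) submodules: the former by construction, the latter because $r\,(h_1,h_2)\in\gG^{\operatorname{r}}_{r^\gvf(M)}$ with $r\in R\setminus\{0\}$ forces $h_2=r^\gvf(M)(h_1)$ in the $Q$-vector space. Since a closed submodule $N$ of a torsion-free $R$-module $H$ satisfies $N=(Q\otimes_R N)\cap H$, the two submodules coincide, so $\Mag(M,\gvf)=\gG^{\operatorname{r}}_{r^\gvf(M)}$ and the diagram commutes. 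I expect the only delicate point to be this passage between a module and its rationalization --- in particular the identifications $Q\otimes_R H_1^\gvf(\,\cdot\,,\star)\simeq H_1^{\gvf_Q}(\,\cdot\,,\star)$ (flat base change for twisted chain complexes) and the commutation of closure with localization; the rest is formal once Lemma \ref{lem:KLW} and the formula for $\Mag$ on morphisms are in hand.
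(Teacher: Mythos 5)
Your proof is correct and takes essentially the same route as the paper: both reduce the claim to the identity $\Mag(M,\gvf)=\gG^{\operatorname{r}}_{r^\gvf(M)}$ by passing to the field of fractions and observing that the closure of $\ker\mathfrak{M}$ consists precisely of the integral points of the rational kernel. The paper packages this as a single chain $\cl(\ker\mathfrak{M})=\mathfrak{M}^{-1}\big(\Tors_R H_1^\gvf(M,\star)\big)=(\ker\mathfrak{M}_Q)\cap(H_-\oplus H_+)=\gG^{\operatorname{r}}_\rho$ (stated in the slight extra generality $\gvf_-\neq\gvf_+$ so as to reuse it for Lemma~\ref{lem:mcyl}), while you first compute the rationalizations and then lift using closedness, and you spell out explicitly that $r^\gvf(M)$ is unitary and fixes $\nu/2$ --- facts the paper leaves implicit, as they follow automatically once $\Mag(M,\gvf)$ is known to be a morphism in $\pLagr_R$.
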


\begin{proof}
	For later use, we will prove a slightly more general result.
	We consider some group homomorphisms $\gvf_\pm~ \colon H_1(F_g) \to G$
	and assume that $M \in \HCob(F_g)$ is a cobordism such that $\gvf_- \circ m_-^{-1}= \gvf_+ \circ m_+^{-1}~ \colon H_1(M) \to G$.
	We claim that
	 \begin{equation} \label{eq:slightly_more_general}
	 \Mag (M, \gvf) = \Gamma_{\rho}^{\operatorname{r}}
	 \end{equation}  
	 where $\gvf :=  \gvf_\pm \circ m_\pm ^{-1}$ 
	 and $\gG_{\rho}^{\operatorname{r}}$ is the restricted graph of  the $Q$-linear isomorphism
	 $$
	 \rho := \big( (m_{+})^{-1} \circ m_{-}~ \colon H^{\gvf_{-,Q}}_1 ( F_g, \star  ) \longrightarrow H^{\gvf_{+,Q}}_1 ( F_g, \star  )\big).
	 $$ 
	The special case where $\gvf_-=\gvf_+$ implies the proposition.
	To prove \eqref{eq:slightly_more_general}, we consider the homomorphisms
	$$ \mathfrak{M} := (-m_{-}) \oplus m_{+}~ \colon H^{\gvf_-}_1 ( F_g, \star ) \oplus H^{\gvf_+}_1 ( F_g, \star ) \longrightarrow H^{\gvf}_1 ( M, \star )$$
	and 
	$$ 
	\mathfrak{M}_Q := (-m_{-}) \oplus m_{+}~ \colon H^{\gvf_{-,Q}}_1 ( F_g, \star ) \oplus  H^{\gvf_{+,Q}}_1 ( F_g, \star  ) \longrightarrow H^{\gvf_Q}_1 ( M, \star ).
	$$
	Since the torsion submodule of $ H^{\gvf}_1 ( M, \star )$ is the kernel of the canonical homomorphism from
	$H^{\gvf}_1 ( M, \star )$ to $Q \otimes_R H^{\gvf}_1 ( M, \star ) \simeq H^{\gvf_Q}_1 ( M, \star )$, we have
	\begin{eqnarray*}
	 \Mag (M, \gvf) \ = \   \cl \big( \ker \mathfrak{M} \big)   
	 &=& \mathfrak{M}^{-1}\big(\Tors_R H^{\gvf}_1 ( M, \star ) \big)\\
	 &=&      (\ker \mathfrak{M}_Q)    \cap \big( H^{\gvf_-}_1 ( F_g, \star ) \oplus  H^{\gvf_+}_1 ( F_g, \star ) \big) \\
	 &=&   \Gamma_{\rho} \cap  \big( H^{\gvf_-}_1 ( F_g, \star ) \oplus  H^{\gvf_+}_1 ( F_g, \star ) \big) 	 \ = \ \gG_{\rho}^{\operatorname{r}}.
	 \end{eqnarray*}  
	 
	 \up
\end{proof}

\begin{remark}
Proposition  \ref{prop:magnus} implies that $r^\gvf(M)$ is unitary with respect to the skew-Hermitian form $\SSform{\cdot}{\cdot}$ on $H_1^{\gvf_Q}(F_g,\star)$. This is well-known: see \cite[Theorem 2.4]{Sa07} for an equivalent result.
\end{remark}

% ***************************************************************************
% ***************************************************************************

\subsection{Computations with Heegaard splittings} \label{subsec:Heegaard}

For any $g \geq 0$, let $(\ga_1, \ldots, \ga_g, \gb_1, \ldots, \gb_g)$ be the system of \lq\lq meridians and parallels\rq\rq{} 
in the surface $F_g$ shown in Figure \ref{fig:meridians_parallels}. We denote by $C_0^g \in \Cob(0,g)$ the cobordism obtained from $F_g \times [-1,1]$ by attaching $g$ $2$-handles along the curves $\ga_1 \times \{-1\},\dots,\ga_g \times \{-1\}$. Similarly, let $C_g^0 \in \Cob(g,0)$ be the cobordism obtained from $F_g \times [-1,1]$ by attaching $g$ $2$-handles along the curves  $\gb_1 \times \{1\},\dots,\gb_g \times \{1\}$. Note that both $C_g^0$ and $C^g_0$ are handlebodies of genus $g$, which we call \emph{upper handlebody} and \emph{lower handlebody}, respectively. 

A \emph{Heegaard splitting} of a cobordism $M \in \Cob(g_-, g_+)$ is a decomposition in the monoidal category $\Cob$ of the form

\begin{equation}\label{eq:Heegaard}
	M = \big( C^0_{r_+}  \boxtimes  \Id_{g_+}  \big) \circ \bfc(f) \circ \big( C_0^{r_-}  \boxtimes  \Id_{g_-} \big)
\end{equation}

where $r_-,r_+$ are some non-negative integers such that $g_+ + r_+ = g_- + r_-$ and $f\in \mcg(F_{g_\pm + r_\pm})$. Such a decomposition of $M$ always exists: see \cite[Theorem 5]{Ke03}, for instance.

Assume now given a ring $R$ and a multiplicative subgroup $G \subset R^{\times}$ satisfying \eqref{eq:R_and_G}. Let $(M,\gvf) \in \Cob_G((g_-,\gvf_-),(g_+,\gvf_+))$ for which we wish to compute the Magnus functor. 
Any Heegaard splitting \eqref{eq:Heegaard} of $M$ induces a decomposition 
\begin{equation}
	\Mag(M,\gvf) = \Big( \Mag\big(C^0_{r_+}, \overline{\gvf}\big)  \boxtimes  \Id_{\Mag(g_+,\gvf_+)} \Big) 
	\circ \Mag\big( \bfc(f), \overline{\gvf} \big) \circ \Big( \Mag\big( C_0^{r_-}, \overline{\gvf} \big)  \boxtimes  \Id_{\Mag(g_-,\gvf_-)} \Big)
\end{equation}
in the monoidal category $\pLagr_R$ where, for any submanifold $S$ of $M$, the map $\overline{\gvf}~ \colon H_1(S) \rightarrow G$ denotes the group homomorphism induced by $\gvf$ through the inclusion $S\hookrightarrow M$. Recall that the identity of $\Mag(g_\pm,\gvf_\pm) = \big(H^{\gvf_\pm}_1 (F_{g_\pm},\star), \SSempty,  \nu/2  \big)$ in $\pLagr_R$ is simply the diagonal of $H^{\gvf_\pm}_1 (F_{g_\pm},\star)$. 
Thus the computation of the functor $\Mag$ can be reduced to its evaluations on upper handlebodies, lower handlebodies and mapping cylinders, which we determine below.

\begin{lemma} \label{lem:handlebodies}
Let $g\geq 0$ be an integer. For any  homomorphism $\gvf~ \colon H_1(C_0^g) \to G$ inducing  $\gvf_+~ \colon H_1(F_g) \to G$ on the ``top  boundary'', the submodule $\Mag(C_0^{g},\gvf)$ of $H_1^{\gvf_+}(F_g,\star)$ is freely generated~by 
$$
a_1^{\gvf_+}:=\big[1 \otimes \widehat \alpha_1  \big],\dots,a_g^{\gvf_+}:= \big[1 \otimes \widehat \alpha_g  \big].
$$ 
Similarly, for any homomorphism $\gvf~ \colon H_1(C_g^0) \to G$  inducing  $\gvf_-~ \colon H_1(F_g) \to G$ on the ``bottom boundary'',
the submodule  $\Mag(C_g^{0},\gvf) $ of $H_1^{\gvf_-}(F_g,\star)$ is freely generated by 
$$
b_1^{\gvf_-}:= \big[1 \otimes \widehat \beta_1  \big],\dots,b_g^{\gvf_-}:= \big[1 \otimes \widehat \beta_g  \big].
$$
\end{lemma}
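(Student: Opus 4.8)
The plan is to compute the kernel of the map defining $\Mag(C_0^g,\gvf)$ directly, using the explicit handle description of $C_0^g$. Recall that $C_0^g$ is obtained from $F_g\times[-1,1]$ by attaching $2$-handles along $\alpha_1\times\{-1\},\dots,\alpha_g\times\{-1\}$ on the bottom. Since $C_0^g\in\Cob(0,g)$, the bottom surface is $F_0$ (a disk), so the map whose kernel we must take is just
$$
m_+\colon H_1^{\gvf_+}(F_g,\star)\longrightarrow H_1^{\gvf}(C_0^g,\star),
$$
where $m_+\colon F_g\hookrightarrow\partial C_0^g\subset C_0^g$ is the inclusion of the top boundary; the contribution of $F_0$ vanishes because $H_1^{\cdot}(F_0,\star)=0$. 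First I would note that $C_0^g$ deformation-retracts onto a CW-complex built from $F_g$ by attaching $g$ $2$-cells along the curves $\alpha_1,\dots,\alpha_g$ (sitting on the top surface via the product structure, after sliding the attaching circles from the bottom to the top). Consequently $C_0^g$ is a handlebody of genus $g$ whose fundamental group is the free group on $\beta_1,\dots,\beta_g$, and the relevant inclusion-induced map on $H_1$ kills exactly the $\alpha_i$.

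The key computation is then at the level of twisted chain complexes. Writing $\pi:=\pi_1(F_g,\star)$ free on $\alpha_i,\beta_i$, the relative CW-pair $(C_0^g,\star)$ has, up to homotopy, $g$ relative $1$-cells (the generators $\alpha_i,\beta_i$ — here I would use the standard one-vertex model) and $g$ relative $2$-cells attached along the words $\alpha_i$. After twisting by $\gvf$ and tensoring over $\ZZ[\pi]$ with $R$, the relevant portion of the twisted chain complex computing $H_1^{\gvf}(C_0^g,\star)$ is the image of $H_1^{\gvf_+}(F_g,\star)\cong R^{2g}$ (with basis the classes $a_i^{\gvf_+},b_i^{\gvf_+}$) modulo the submodule generated by the Fox-derivative images of the relators $\alpha_i$. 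Since the Fox derivative of the word $\alpha_i$ with respect to $\alpha_j$ is $\delta_{ij}$ and with respect to $\beta_j$ is $0$, the relators' images under $\gvf$ are precisely the basis vectors dual to the $a_i^{\gvf_+}$. Hence $\ker\big(m_+\colon H_1^{\gvf_+}(F_g,\star)\to H_1^{\gvf}(C_0^g,\star)\big)$ is exactly the free submodule spanned by $a_1^{\gvf_+},\dots,a_g^{\gvf_+}$. This submodule is already closed (it is a direct summand of a free module, hence saturated), so $\cl(\ker m_+)$ equals it, giving $\Mag(C_0^g,\gvf)=\langle a_1^{\gvf_+},\dots,a_g^{\gvf_+}\rangle$ as a free rank-$g$ submodule.

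For $C_g^0$ the argument is the mirror image: here $C_g^0\in\Cob(g,0)$, the top surface is $F_0$, so the defining map reduces to $-m_-\colon H_1^{\gvf_-}(F_g,\star)\to H_1^{\gvf}(C_g^0,\star)$, and $C_g^0$ is obtained by attaching $2$-handles along $\beta_1,\dots,\beta_g$ on the top; running the same twisted-chain-complex computation shows the kernel is freely generated by $b_1^{\gvf_-},\dots,b_g^{\gvf_-}$. The main obstacle I anticipate is bookkeeping rather than conceptual: one must carefully check that sliding the handle attaching curves along the product $F_g\times[-1,1]$ does not disturb the homology classes (it does not, as it is an isotopy), and that the based twisted homology is being computed with the right identification $H_1(F_g,\star;\ZZ[\pi])\cong I(\ZZ[\pi])$ so that the relator images land where claimed; once this is set up, the Fox-calculus identity $\partial\alpha_i/\partial\alpha_j=\delta_{ij}$, $\partial\alpha_i/\partial\beta_j=0$ makes the conclusion immediate. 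Alternatively, and perhaps more cleanly, one can avoid Fox calculus entirely and argue geometrically: the core disks of the attached $2$-handles exhibit $\alpha_i$ as null-homotopic in $C_0^g$, so $1\otimes\widehat{\alpha}_i$ bounds in $\widehat{C_0^g}$ and $a_i^{\gvf_+}\in\ker m_+$; conversely a rank count using that $H_1^{\gvf}(C_0^g,\star)$ is free of rank $g$ (being the based twisted homology of a genus-$g$ handlebody, computed via Lemma analogous to the surface case) together with surjectivity of $m_+$ forces the kernel to be exactly this rank-$g$ summand.
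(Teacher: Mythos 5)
Your argument is correct and essentially the same as the paper's, though phrased more computationally. The paper observes that $C_0^g$ retracts onto a wedge of circles dual to $\beta_1,\dots,\beta_g$, so $H_1^\varphi(C_0^g,\star)$ is free on the images of $b_1^{\varphi_+},\dots,b_g^{\varphi_+}$, and that each $\alpha_i$ is null-homotopic in $C_0^g$ (the core disk of the attached $2$-handle), whence $m_+(a_i^{\varphi_+})=0$; the kernel is then exactly $\langle a_1^{\varphi_+},\dots,a_g^{\varphi_+}\rangle_R$, and its closure is itself because the quotient $\simeq R^g$ is torsion-free. Your primary route replaces the retraction with an explicit one-vertex CW model for $(C_0^g,\star)$ and computes the boundary of the $2$-cells via Fox derivatives $\partial\alpha_i/\partial\alpha_j=\delta_{ij}$, $\partial\alpha_i/\partial\beta_j=0$; this is the same computation in different clothing, and your closing ``alternative'' geometric argument is literally the paper's. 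Two small slips worth fixing: you wrote ``$g$ relative $1$-cells'' where you meant $2g$, and ``basis vectors dual to the $a_i^{\varphi_+}$'' where you mean the vectors $a_i^{\varphi_+}$ themselves (the boundary map has image spanned by them, not a dual statement).
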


\begin{proof}
Let $c_+~ \colon F_g \to C_{0}^g$ be the parametrization of the ``top boundary''. Then 
 $$ 
\Mag(C_0^{g},\gvf) = \cl \left( \ker \left(c_+~ \colon H_1^{\gvf_+}(F_g,\star) \longrightarrow H_1^\gvf(C_0^g,\star) \right) \right). 
 $$ 
The $3$-manifold $C_0^g$ retracts to a wedge of circles, corresponding to the curves $\beta_1,\dots,\beta_g$: hence the $R$-module $H_1^\gvf(C_0^g,\star)$ is freely generated by $c_+(b_1^{\gvf_+}),\dots, c_+(b_g^{\gvf_+})$. Furthermore,  $c_+(a_i^{\gvf_+})=0$  for all $i\in \{1,\dots,g\}$, since $\alpha_i$ is null-homotopic in $C_0^g$. It follows that $\ker c_+$ is the submodule $\langle a_1^{\gvf_+},\dots,a_g^{\gvf_+}\rangle_R$ of $ H_1^{\gvf_+}(F_g,\star)$ spanned by  $a_1^{\gvf_+},\dots,a_g^{\gvf_+}$.
Since the quotient
$$
H_1^{\gvf_+}(F_g,\star)/ \langle a_1^{\gvf_+},\dots, a_g^{\gvf_+}\rangle_R \simeq R^g 
$$ 
is torsion-free, $ \ker c_+$ coincides with its closure. We conclude that $\Mag(C_0^{g},\gvf) = \langle a_1^{\gvf_+},\dots,a_g^{\gvf_+}\rangle_R$.
The second statement of the lemma is proved similarly.
\end{proof}

\begin{lemma} \label{lem:mcyl}
Let $f \in \mcg(F_g)$ and let $\gvf_\pm~ \colon H_1(F_g) \rightarrow G$ be  group homomorphisms such that $\gvf_-= \gvf_+ \circ f$. Denote by $\gvf~ \colon H_1(F_g \times [-1,1]) \rightarrow G$ the homomorphism $\gvf_+ \circ p$, where $p~ \colon F_g \times [-1,1] \rightarrow F_g$ is the cartesian projection. Then, $\Mag(\bfc(f), \gvf )$ is the graph of the unitary $R$-isomorphism
$f~ \colon H_1^{\gvf_-} (F_g,\star) \to H_1^{\gvf_+}(F_g,\star)$.
\end{lemma}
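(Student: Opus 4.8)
The plan is to unwind the definition of $\Mag$ on a mapping cylinder and recognize the resulting Lagrangian submodule as the graph of $f$. Recall that $\bfc(f)=\big(F_g\times[-1,1],\, m\big)$ has boundary parametrizations $m_-=f\times\{-1\}$ and $m_+=\Id\times\{1\}$, and that $\gvf:=\gvf_+\circ p$ satisfies $\gvf\circ m_-=\gvf_+\circ p\circ(f\times\{-1\})=\gvf_-$ and $\gvf\circ m_+=\gvf_+$, so $(\bfc(f),\gvf)$ is indeed a morphism $(g,\gvf_-)\to(g,\gvf_+)$ in $\Cob_G$. As in the proof of Theorem \ref{th:Magnus}, collapsing the vertical boundary of $\bfc(f)$ we may assume that the bottom and top boundaries share the base point $\star$. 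By definition,
$$\Mag(\bfc(f),\gvf)=\cl\big(\ker\mathfrak{M}\big),\qquad \mathfrak{M}:=(-m_-)\oplus m_+\colon H_1^{\gvf_-}(F_g,\star)\oplus H_1^{\gvf_+}(F_g,\star)\longrightarrow H_1^{\gvf}(F_g\times[-1,1],\star),$$
so everything reduces to identifying $\ker\mathfrak{M}$.

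First I would compute $\ker\mathfrak{M}$. The cartesian projection $p\colon F_g\times[-1,1]\to F_g$ is a deformation retraction and $m_+=\Id\times\{1\}$ is a section of it; hence $m_+$ induces an isomorphism $H_1^{\gvf_+}(F_g,\star)\xrightarrow{\ \simeq\ }H_1^{\gvf}(F_g\times[-1,1],\star)$. Moreover the straight-line homotopy $(x,t)\mapsto(f(x),t)$, which (after collapsing the vertical boundary, along which $f$ is the identity) is a based homotopy, shows that the map induced by $m_-=(\,\cdot\,,-1)\circ f$ equals $m_+\circ f$ at the level of twisted homology, where $f\colon H_1^{\gvf_-}(F_g,\star)\to H_1^{\gvf_+}(F_g,\star)$ is the map induced by $f$ (using $\gvf_+\circ f_*=\gvf_-$ on $H_1(F_g)$). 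Therefore $\mathfrak{M}(x,y)=m_+\big(y-f(x)\big)$, which vanishes iff $y=f(x)$, so $\ker\mathfrak{M}=\Gamma_f:=\{(x,f(x)):x\in H_1^{\gvf_-}(F_g,\star)\}$. Next, $\Gamma_f$ is its own closure: projection onto the second factor identifies $\big(H_1^{\gvf_-}(F_g,\star)\oplus H_1^{\gvf_+}(F_g,\star)\big)/\Gamma_f$ with $H_1^{\gvf_+}(F_g,\star)\simeq R^{2g}$, which is torsion-free, so $\cl(\Gamma_f)=\Gamma_f$ and $\Mag(\bfc(f),\gvf)=\Gamma_f$. (Alternatively, this computation is the special case $M=\bfc(f)$ of formula \eqref{eq:slightly_more_general} in the proof of Proposition \ref{prop:magnus}, since there the $Q$-isomorphism $\rho=(m_+)^{-1}\circ m_-$ is the rationalization of the $R$-isomorphism $f$, which already restricts to an isomorphism over $R$, so its restricted graph equals the graph of $f$.)

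It remains to match this with ``the graph of the unitary $R$-isomorphism $f$'' in the sense of the graph functor $\pUni\to\pLagr_R$: nothing is needed at the level of modules, and for the forms one only has to see that $f$ preserves $\SSform{\cdot}{\cdot}$ and $\nu/2$. Since $\Mag(\bfc(f),\gvf)=\Gamma_f$ is Lagrangian by Lemma \ref{LagrG}, the correspondence between Lagrangian graphs and unitary isomorphisms (\cite[Theorem 2.9]{CiTu05I}) forces $\SSform{f(\xi)}{f(\eta)}=\SSform{\xi}{\eta}$ for all $\xi,\eta$; alternatively one checks this directly, since $f$ fixes $\partial F_g$ pointwise and preserves orientation, hence fixes the boundary curve $\nu$, the auxiliary points $l<\star<r$ and the local signs entering the definition of the homotopy intersection pairing, so $\lambda(f_*a,f_*b)=f_*\lambda(a,b)$ on $\pi:=\pi_1(F_g,\star)$, and combining this with $\gvf_+\circ f_*=\gvf_-$ and with $\partial_*\circ f=\partial_*$ one gets the claim from \eqref{eq:<-,->} and \eqref{eq:form_s}. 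Finally $f(\nu/2)=\nu/2$ because $f$ fixes $\nu=\partial F_g$, so $f$ is a morphism in $\pUni$ and $\Gamma_f$ is a pointed Lagrangian relation, consistent with $\Mag$ taking values in $\pLagr_R$. There is no real obstacle here; the only point requiring a little care is the bookkeeping of base points when collapsing the vertical boundary of the mapping cylinder, handled exactly as in the proof of Theorem \ref{th:Magnus}, and the observation that the word ``unitary'' in the statement is automatic once $\Gamma_f$ is known to be Lagrangian.
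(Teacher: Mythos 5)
Your proof is correct and, via the parenthetical remark pointing to \eqref{eq:slightly_more_general}, is essentially the same as the paper's: the paper's entire proof is the one line ``This immediately follows from \eqref{eq:slightly_more_general},'' relying on the fact that for $M=\bfc(f)$ the map $\rho=(m_+)^{-1}\circ m_-$ is the $R$-isomorphism $f$, so $\Gamma^{\mathrm{r}}_\rho=\Gamma_f$. Your additional direct computation of $\ker\mathfrak{M}$ via the based homotopy $(x,s)\mapsto(f(x),s)$ and the torsion-freeness of the quotient, as well as the discussion of unitarity and preservation of $\nu/2$, are correct but go beyond what the paper needs, since these aspects are already packaged in \eqref{eq:slightly_more_general} and in the ``graph'' functor $\pUni\to\pLagr_R$.
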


\begin{proof}
This  immediately follows from \eqref{eq:slightly_more_general}.
\end{proof}

It is well-known that the map  $f~ \colon H_1^{\gvf_-} (F_g,\star) \to  H_1^{\gvf_+}(F_g,\star)$ in Lemma \ref{lem:mcyl} can be explicitly computed using Fox's free differential calculus. Specifically, the matrix of  this $R$-isomorphism in the bases $(a_1^{\gvf_\pm},\dots,a_g^{\gvf_\pm},b_1^{\gvf_\pm},\dots,b_g^{\gvf_\pm})$ of $H_1^{\gvf_\pm}(F_g,\star)$  is the ``Jacobian matrix''
$${\scriptsize
 \gvf_+ \begin{pmatrix} 
\frac{\partial f_*(\alpha_1)}{\partial \alpha_1} & \cdots&\frac{\partial f_*(\alpha_g)}{\partial \alpha_{1}}  & \frac{\partial f_*(\beta_1)}{\partial \alpha_1} & \cdots&\frac{\partial f_*(\beta_g)}{\partial \alpha_{1}} \\
\vdots & &\vdots & \vdots & & \vdots\\
\frac{\partial f_*(\alpha_1)}{\partial \alpha_g} & \cdots&\frac{\partial f_*(\alpha_g)}{\partial \alpha_{g}}  & \frac{\partial f_*(\beta_1)}{\partial \alpha_g} & \cdots&\frac{\partial f_*(\beta_g)}{\partial \alpha_{g}}  \\
\frac{\partial f_*(\alpha_1)}{\partial \beta_1} & \cdots&\frac{\partial f_*(\alpha_g)}{\partial \beta_{1}}  & \frac{\partial f_*(\beta_1)}{\partial \beta_1} & \cdots&\frac{\partial f_*(\beta_g)}{\partial \beta_{1}} \\
\vdots & &\vdots & \vdots & & \vdots\\
\frac{\partial f_*(\alpha_1)}{\partial \beta_g} & \cdots&\frac{\partial f_*(\alpha_g)}{\partial \beta_{g}}  & \frac{\partial f_*(\beta_1)}{\partial \beta_g} & \cdots&\frac{\partial f_*(\beta_g)}{\partial \beta_{g}}  
\end{pmatrix}}
$$
where  $f_*~ \colon \pi_1(F_g,\star) \to  \pi_1(F_g,\star)$ is the group isomorphism induced by the homeomorphism~$f$.
This makes the link between the group-theoretical and the topological formulations of the Magnus representation of mapping class groups.

% ***************************************************************************
% ***************************************************************************

\subsection{The case of trivial coefficients} \label{subsec:trivial_coef}

Consider the simplest case where $G := \{1\}$, $R := \ZZ$ and the involution $r\mapsto \overline{r}$ of $R$ is the identity. Then, the Magnus functor $\Mag:= \Mag_{R,G}$ provides a \bgm strong \egm monoidal functor
$$
\Mag~ \colon \Cob/\!\sim_H\, \longrightarrow \Lagr_\ZZ
$$
where  $\Lagr_\ZZ$ denotes the category of Lagrangian relations between symplectic $\ZZ$-modules. Specifically, the functor $\Mag$ assigns to any object $g$ the symplectic $\ZZ$-module
\begin{equation}
\Mag(g) =  \big(H_1 (F_g),  \formempty\big),
\end{equation}
where $H_1 (F_g)=H_1(F_g;\ZZ)$  is the ordinary homology and $\formempty$ is the usual homology intersection form,
and it assigns to any morphism $M\in \Cob(g_{-},g_{+})$ the Lagrangian submodule
\begin{equation}  \label{eq:closure_here}
 \Mag (M)  =  \cl  \big( \ker \big((-m_-)\oplus m_+~ \colon H_1 ( F_{g_{-}} ) \oplus H_1 ( F_{g_{+}}) \longrightarrow  H_1( M )\big) \big),
 \end{equation}
which is a free direct summand of $H_1 ( F_{g_{-}} ) \oplus H_1 ( F_{g_{+}})$ of rank $g_-+g_+$. Note that,  with respect to the general definition of $\Mag$ provided by Theorem \ref{th:Magnus}, we have done two simplifications  which are only possible for trivial coefficients:
\begin{itemize}
\item  we have multiplied the intersection forms by  $1/2$ in \eqref{eq:form_s}, so that the symplectic forms under consideration are non-singular (and not only non-degenerate);
\item we have ignored the ``distinguished'' element $0\in H_1 (F_g)$ so that $\Mag$ takes values in the monoidal category $\Lagr_\ZZ$ (instead of $\pLagr_\ZZ$ --- see Remark \ref{rem:monoidal}).
\end{itemize}

If we now take coefficients in $R:=\RR$ and we still assume that $G=\{1\}$, then the same definitions apply and provide a \bgm strong \egm monoidal functor $\Mag: \Cob/\!\sim_H\, \to \Lagr_\RR$. (Of course, the closure in \eqref{eq:closure_here} then becomes needless.) In this case, the functor $\Mag$ is essentially  the ``TQFT''  introduced by Donaldson in \cite{Do99}.  (See also \cite{HoLiWa15}.) However, our context is slightly different since Donaldson's functor applies to cobordisms between \emph{closed} surfaces, and those cobordisms $M$ are assumed to satisfy the following: the linear map $H_1(\partial M;\RR) \to H_1(M;\RR)$ induced by the inclusion is surjective. This homological assumption is used in \cite{Do99} to give an alternative definition in terms of moduli spaces of flat $U(1)$-connections, which recovers the construction of Frohman and Nicas \cite{FN91,FN94}. (See also \cite{Ke03a}.)

% ***************************************************************************
% ***************************************************************************
% ***************************************************************************

\section{Relation with the Alexander functor}

We relate the Magnus functor to the Alexander functor, which has been introduced in~\cite{FlMa14}. In this section, $G$ is a finitely-generated free abelian group, $R:=\ZZ[G]$ is the group  ring of $G$ and $Q:=Q(R)$ is the field of fractions of $R$. For any $R$-module $N$, we denote $N_Q:= Q \otimes_R N$ and, when $N$ is torsion-free,    $N$ is viewed as a submodule of $N_Q$ via the canonical map $n \mapsto 1 \otimes n$. For any $R$-linear map $f~ \colon N \to N'$, the map $\bgm \Id_Q \egm \otimes_R f$ is denoted $f_Q~ \colon  N_Q \to N'_Q$.  

% ***************************************************************************
% ***************************************************************************

\subsection{Review of the Alexander function} \label{subsec:review_Af}

The construction of the Alexander functor is based on the notion of ``Alexander function'' for  $3$-manifolds with boundary, 
which is due to Lescop \cite{Lescop}. We start by recalling this notion.

Let $M$ be a compact connected orientable $3$-manifold with \bgm non-empty \egm connected boundary. 
We fix a base point $\star\in \partial M$ and  a group homomorphism $\varphi~ \colon H_1(M) \to G$. Using a handle decomposition of~$M$, it is easily seen that the $R$-module $H:= H_1^\varphi(M,\star)$ has a presentation of deficiency $g:= 1-\chi(M)$: see  \cite[Lemma 2.1]{FlMa14}. We choose such a presentation:
\begin{equation}   \label{eq:choosen_presentation}
H = \big\langle \ggm_1,\dots, \ggm_{g+r}\, \vert\, \rho_1, \ldots, \rho_r \big\rangle.
\end{equation}
Let $\gG$ be the $R$-module freely generated by the symbols $\gamma_1,\dots, \gamma_{g+r}$, and regard $\rho_1,\dots,\rho_r$  as elements of $\Gamma$. 
Then the \emph{Alexander function} of $M$ with coefficients twisted by  $\varphi$ is the $R$-linear map $\cA_M^\varphi~ \colon \Lambda^g H \to R$
defined by 
$$ \cA^{\gvf}_M (u_1\wedge \cdots \wedge u_g) \cdot \gamma_1\wedge \cdots \wedge  \gamma_{g+r}= 
\rho_1\wedge \cdots \wedge \rho_r \wedge \widetilde{u_1} \wedge \cdots \wedge \widetilde{u_g} \ \in \Lambda^{g+r} \Gamma $$
for any  $u_1,\dots, u_g \in H$, which we lift to some $\widetilde{u_1}, \dots, \widetilde{u_g} \in \Gamma$ in an arbitrary way. 
Due to the choice of the presentation \eqref{eq:choosen_presentation}, the map $ \cA^{\gvf}_M$ is only defined up to multiplication by an element of~$\pm G$.

% ***************************************************************************
% ***************************************************************************

\subsection{A formula for the Alexander function}

We now give a general formula for the map $\cA^{\gvf}_M$, keeping the  notations of Section \ref{subsec:review_Af}. Consider the $R$-linear map
$$
j^\star~ \colon H^\partial :=  H_1^\varphi(\partial M,\star) \longrightarrow H_1^\varphi(M,\star) = H 
$$
induced by the inclusion \bgm $j^\star~ \colon (\partial M,\star) \to (M,\star)$, \egm and consider the same map $j^\star_Q~ \colon H^\partial_Q \to H_Q$ with coefficients in the field~$Q$.  

\begin{lemma} \label{lem:Alexander_function}
The following three conditions are equivalent:
\begin{itemize}
\item[(1)] $\cA^{\gvf}_M \neq 0$;
\item[(2)] $\dim H_Q=g$;
\item[(3)] $j_Q^\star$ is surjective.
\end{itemize}
Assume this and choose some elements $w_1,\dots,w_g \in H^\partial$ such that  $j^\star_Q(w_1),\dots, j^\star_Q( w_g )$ generate the vector space $H_Q $. 
Then, for any $y_1,\dots,y_g \in H$, we have
\begin{equation} \label{eq:compute_Alex}
\cA_M^\varphi\big( y_1 \wedge \cdots \wedge y_g \big) = \ord(H/j^\star(W)) \cdot 
 \det\left(\begin{array}{c} \hbox{matrix of $\big( q(y_1),\dots,  q(y_g)\big)$ in} \\
\hbox{the basis $\big(  j^\star_Q(w_1),\dots, j^\star_Q(w_g ) \big)$}  \end{array}\right)
\end{equation}
where $q~ \colon H \to H_Q$ is the canonical map, $W$ is  the  $R$-submodule  of $H^\partial$ generated by $w_1,\dots,w_g$, and $ \ord( \cdot )\in R/\!\pm G$ denotes the order of a finitely-generated $R$-module.
\end{lemma}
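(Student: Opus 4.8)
The plan is to prove the equivalence of (1), (2), (3) first, then to establish the formula \eqref{eq:compute_Alex} by a direct manipulation of exterior powers using a well-chosen presentation.

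\textbf{Equivalence of the three conditions.} The long exact sequence of the pair $(M,\star)$ with $Q$-coefficients reads $H_1^{\varphi_Q}(\star) \to H_1^{\varphi_Q}(M) \to H_Q \stackrel{\partial_*}{\to} H_0^{\varphi_Q}(\star) \to H_0^{\varphi_Q}(M)$. Since $\varphi_Q$ is non-trivial on no cycle issue matters here only through whether $\varphi$ is trivial; in any case $H_0^{\varphi_Q}(\star) \simeq Q$, so $\dim_Q H_Q \leq \dim_Q H_1^{\varphi_Q}(M) + 1$. More to the point, the deficiency-$g$ presentation \eqref{eq:choosen_presentation} gives an exact sequence $R^r \stackrel{\text{(relations)}}{\longrightarrow} R^{g+r} \to H \to 0$, hence after tensoring with $Q$ an exact sequence $Q^r \to Q^{g+r} \to H_Q \to 0$, which shows $\dim_Q H_Q \geq g$ always, with equality if and only if the relation matrix has full rank $r$ over $Q$. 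Now $\cA^\varphi_M$ is, by its very definition, built from the $r\times(g+r)$ relation matrix: $\cA^\varphi_M(u_1 \wedge \cdots \wedge u_g)$ is (up to the unit $\gamma_1 \wedge \cdots \wedge \gamma_{g+r}$) the wedge $\rho_1 \wedge \cdots \wedge \rho_r \wedge \widetilde{u_1} \wedge \cdots \wedge \widetilde{u_g}$, so $\cA^\varphi_M \neq 0$ as a map $\Lambda^g H \to R$ if and only if $\rho_1 \wedge \cdots \wedge \rho_r \neq 0$ in $\Lambda^r \Gamma$, i.e. if and only if $\rho_1,\dots,\rho_r$ are $Q$-linearly independent, i.e. the relation matrix has rank $r$ over $Q$. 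This is exactly (2). For (2)$\Leftrightarrow$(3): by Blanchfield duality (Theorem \ref{th:Blanchfield}) or simply by the half-lives-half-dies principle for the boundary of a $3$-manifold, $\dim_Q \operatorname{im}(j^\star_Q) = \tfrac12 \dim_Q H^\partial_Q$; but $H^\partial_Q = H_1^{\varphi_Q}(\partial M,\star)$ and $\partial M$ is a closed connected surface of genus, say, $h$ (here the relevant surface in the factorization formula context is a $F_{g_-} \cup (S^1\times I) \cup F_{g_+}$ glued along the boundary, but abstractly a closed surface), so $\dim_Q H^\partial_Q = 2h$ when $\varphi$ is non-trivial and the coker/ker contributions match up to give $\dim_Q \operatorname{im} j^\star_Q = h$; chasing the exact sequence $H^\partial_Q \to H_Q \to H_1^{\varphi_Q}(M,\partial M) \to \cdots$ and using $\chi(M) = 1-g$ together with Poincaré–Lefschetz duality identifies surjectivity of $j^\star_Q$ with $\dim_Q H_Q = g$. (I will phrase this more economically: $j^\star_Q$ surjective $\iff$ $H_1^{\varphi_Q}(M,\partial M) = 0$ in the relevant degree $\iff$, by duality, $H^{\varphi_Q}(M) $ vanishes appropriately, which by the Euler characteristic count is $\dim_Q H_Q = g$.)

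\textbf{The formula.} Assume the equivalent conditions hold and pick $w_1,\dots,w_g \in H^\partial$ with $j^\star_Q(w_1),\dots,j^\star_Q(w_g)$ a $Q$-basis of $H_Q$. Set $W := \langle w_1,\dots,w_g\rangle_R \subset H^\partial$. The submodule $j^\star(W) \subset H$ has rank $g$, equal to $\operatorname{rank} H$, so $H/j^\star(W)$ is a torsion $R$-module and $\ord(H/j^\star(W)) \in R/\!\pm G$ is defined. The strategy now is to choose the presentation \eqref{eq:choosen_presentation} adapted to $W$: take the generators $\gamma_1,\dots,\gamma_{g+r}$ so that $\gamma_1,\dots,\gamma_g$ map to $j^\star(w_1),\dots,j^\star(w_g)$ in $H$, while $\gamma_{g+1},\dots,\gamma_{g+r}$ are auxiliary generators. (Such a presentation exists: start from any presentation, adjoin the $g$ generators $j^\star(w_i)$ together with the $g$ relations expressing them in terms of the old generators — this keeps the deficiency at $g$ after elementary reductions, or rather one does a Tietze manipulation to arrange the first $g$ generators to be the $j^\star(w_i)$; the key point being that \eqref{eq:choosen_presentation} is only well-defined up to the ambiguity already present in $\cA^\varphi_M$.) With this choice, for $y_1,\dots,y_g \in H$ write $q(y_k) = \sum_{i} c_{ik}\, j^\star_Q(w_i)$ with $c_{ik} \in Q$; then lifting $y_k$ to $\widetilde{y_k} = \sum_i c_{ik}\gamma_i + (\text{terms in }\gamma_{g+1},\dots,\gamma_{g+r})$ modulo the relation lattice — more carefully, one lifts $q(y_k)$ rationally and clears denominators — one computes
\begin{equation*}
\rho_1 \wedge \cdots \wedge \rho_r \wedge \widetilde{y_1} \wedge \cdots \wedge \widetilde{y_g} = \det(c_{ik})_{i,k} \cdot \big(\rho_1 \wedge \cdots \wedge \rho_r \wedge \gamma_1 \wedge \cdots \wedge \gamma_g\big) + (\text{lower-order in the }\gamma_1,\dots,\gamma_g\text{ grading}),
\end{equation*}
and the error terms vanish after wedging, so $\cA^\varphi_M(y_1 \wedge \cdots \wedge y_g) = \det(c_{ik}) \cdot \big[\rho_1 \wedge \cdots \wedge \rho_r \wedge \gamma_1 \wedge \cdots \wedge \gamma_g : \gamma_1 \wedge \cdots \wedge \gamma_{g+r}\big]$. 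It remains to identify the scalar $\delta := \big[\rho_1 \wedge \cdots \wedge \rho_r \wedge \gamma_1 \wedge \cdots \wedge \gamma_g : \gamma_1 \wedge \cdots \wedge \gamma_{g+r}\big] \in R$ with $\ord(H/j^\star(W))$ up to $\pm G$. For this, observe that $\delta$ is, up to sign, the determinant of the $r \times r$ minor of the relation matrix on the columns $\gamma_{g+1},\dots,\gamma_{g+r}$; and on the other hand, modding out $H = \langle \gamma_1,\dots,\gamma_{g+r} \mid \rho_1,\dots,\rho_r\rangle$ by $j^\star(W) = \langle \gamma_1,\dots,\gamma_g\rangle$ leaves exactly $\langle \gamma_{g+1},\dots,\gamma_{g+r} \mid \rho_1,\dots,\rho_r\rangle$, whose order ideal is generated by that same $r \times r$ minor determinant. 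Hence $\ord(H/j^\star(W)) = \pm\, \delta$ in $R/\!\pm G$, which gives \eqref{eq:compute_Alex} (noting that $\det(c_{ik})$ is precisely the determinant of the matrix of $(q(y_1),\dots,q(y_g))$ in the basis $(j^\star_Q(w_1),\dots,j^\star_Q(w_g))$).

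\textbf{Main obstacle.} The delicate point is the legitimacy of the ``adapted presentation'': one must argue that choosing \eqref{eq:choosen_presentation} so that its first $g$ generators are the images $j^\star(w_i)$ does not change $\cA^\varphi_M$ beyond its inherent $\pm G$-ambiguity, and that such a presentation of deficiency exactly $g$ genuinely exists (the naive Tietze move of adjoining $g$ generators and $g$ relations raises both counts, so one needs to simultaneously eliminate $g$ of the old generators, which is possible precisely because the $j^\star(w_i)$ span $H_Q$ — this is where hypothesis (2)/(3) is used a second time). An alternative, perhaps cleaner, route that avoids redoing the presentation is to work invariantly: use the exact sequence $0 \to R^r \to R^{g+r} \to H \to 0$ to identify $\Lambda^g H \to R$, $\cA^\varphi_M$, with the composite $\Lambda^g H \to \Lambda^g H_Q \xrightarrow{\sim} Q$ followed by multiplication by a fixed element of $R$ representing $\ord(\operatorname{coker})$, and then specialize the isomorphism $\Lambda^g H_Q \xrightarrow{\sim} Q$ to the basis coming from $j^\star_Q(w_1),\dots,j^\star_Q(w_g)$; the bookkeeping of which representative of the order ideal one lands on is then the crux, and is handled by the minor computation above. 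I would present the argument in this second, more invariant form, relegating the Fitting-ideal/order-ideal identification to a short lemma.
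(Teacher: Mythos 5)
Your proposal is in the same overall spirit as the paper's proof, but it leaves two steps that the paper handles carefully at the level of a gesture.

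\textbf{What works.} Your direct argument for (1)$\Leftrightarrow$(2) is correct and is actually more self-contained than the paper, which cites \cite[Lemma 2.3]{FlMa14}: you observe that $\cA^{\gvf}_M\neq 0$ iff $\rho_1\wedge\cdots\wedge\rho_r\neq 0$ over $Q$ (the ``only if'' being obvious; the ``if'' because one can complete a $Q$-independent $\rho_1,\ldots,\rho_r$ to a $Q$-basis of $\Gamma_Q$ using some of the $\gamma_i$, which lift elements of $H$), and that this is equivalent to the relation matrix having rank $r$ over $Q$, i.e$.$ $\dim_Q H_Q = g$. The structural plan for the formula --- pass to a deficiency-$g$ presentation whose generating set contains the $j^\star(w_i)$, compute the defining wedge, identify the resulting scalar as the determinant of a minor which by Fitting-ideal considerations equals $\ord(H/j^\star(W))$ --- is essentially the paper's plan.

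\textbf{Gap 1: (2)$\Leftrightarrow$(3).} Your sketch here is not a proof and the intermediate claims are off. You want to relate $\dim_Q H_Q$ to the cokernel of $j^\star_Q$, where $j^\star_Q: H^\partial_Q\to H_Q$ is the map on \emph{relative} homology $H_1^{\varphi_Q}(\cdot,\star)$. The claim ``$\dim_Q H^\partial_Q = 2h$ when $\varphi$ is non-trivial'' is wrong for relative homology (it is $2g-1$ when $\varphi j$ is non-trivial, $2g$ when trivial), and the phrase ``$j^\star_Q$ surjective $\iff H_1^{\varphi_Q}(M,\partial M)=0$'' is not justified by the exact sequences you invoke. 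The paper proves (2)$\Leftrightarrow$(3) by (a) the commutative ladder relating the long exact sequences of the pairs $(\partial M,\star)$ and $(M,\star)$ plus the snake lemma, giving $\dim(\coker j^\star_Q)=\dim(\coker j_Q)+\dim\partial_*(H_1^{\varphi_Q}(M,\star))-\dim\partial_*(H_1^{\varphi_Q}(\partial M,\star))$, and (b) the fact --- established in the proof of Lemma \ref{LagrG} --- that $\ker j_Q$ is a \emph{Lagrangian} subspace of $H_1^{\varphi_Q}(\partial M)$, which pins down $\dim(\ker j_Q)=g-\dim\partial_*(H_1^{\varphi_Q}(\partial M,\star))$ in both cases of $\varphi j$. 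Combining (a) and (b) yields $\dim H_1^{\varphi_Q}(M,\star)=\dim(\coker j^\star_Q)+g$ exactly. Your ``half-lives-half-dies'' instinct is correct but it has to be applied to the absolute form $j_Q$ and then transported to $j^\star_Q$ via the ladder; this transport step is missing.

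\textbf{Gap 2: clearing denominators.} You acknowledge that lifting $q(y_k)$ involves rational coefficients, but ``more carefully, one lifts $q(y_k)$ rationally and clears denominators'' is where the real work lives, and you do not carry it out. The lift $\widetilde{y_k}$ must live in $\Gamma=R^{g+r}$, not $\Gamma_Q$, while your intended equation $\widetilde{y_k}=\sum_i c_{ik}\gamma_i+\cdots$ has $c_{ik}\in Q$. The paper resolves this by first passing to the presentation $H=\langle u_1,\ldots,u_r,w'_1,\ldots,w'_g\mid\delta_1,\ldots,\delta_r\rangle$, then to the enlarged presentation with the $x'_i$ and $\theta_i$, and by finding a single $z\in R\setminus\{0\}$ making all the rational expressions integral; the syzygy $z\theta_i+D_i=zx'_i-W'_i$ in \eqref{eq:syzygy} is the precise bookkeeping that lets one compute $z^g\,\delta\wedge\theta\wedge x'$ in $\Lambda^{2g+r}\Gamma$ and divide by $z^g$ at the end. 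Your presentation-existence concern --- ``one must argue that choosing \eqref{eq:choosen_presentation} so that its first $g$ generators are the images $j^\star(w_i)$ does not change $\cA^\varphi_M$ beyond $\pm G$'' --- is handled in the paper implicitly by only ever modifying the presentation via Tietze moves that preserve the Alexander function up to $\pm G$; but your worry about the denominators is the real obstacle, and it is not resolved by the Fitting-ideal identification alone, which kicks in only after the integral wedge computation is in hand.

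In short, the proposal is a correct outline in the same spirit as the paper, but the two steps you yourself flag as ``delicate'' are precisely the ones that constitute the substance of the paper's proof and would need to be filled in before this could be called complete.
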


\begin{proof}
The equivalence between (1) and (2) is implicit in \cite{Lescop} and proved in \cite[Lemma~2.3]{FlMa14}.
We now prove that (2) is equivalent to (3). Let $\varphi_Q~ \colon \ZZ[H_1(M)] \to Q$ be the ring homomorphism induced by $\varphi~ \colon H_1(M) \to G$,
and consider the same commutative diagram as the one at  the beginning of the proof of Lemma \ref{LagrG}:
\begin{center}
		\begin{tikzpicture}[>=angle 90,scale=2.2,text height=1.5ex, text depth=0.25ex]
		%%First place the nodes
		\node (a0) at (0,1) {\normalcolor $0$};
		\node (a1) [right=of a0] {\normalcolor $H^{\gvf_Q}_1 ( \dep M )$};
		\node (a2) [right=of a1] {\normalcolor $H^{\gvf_Q}_1 ( \dep M, \star )$};
		\node (a3) [right=of a2] {\normalcolor $\partial_*\big(H^{\gvf_Q}_1 ( \dep M, \star )\big)$};
		\node (a4) [right=of a3] {\normalcolor $0$};
		\node (b0) [below=of a0] {\normalcolor $0$};
		\node (b1) [below=of a1] {\normalcolor $H^{\gvf_Q}_1 ( M )$};
		\node (b2) [below=of a2] {\normalcolor $H^{\gvf_Q}_1 ( M , \, \star )$};
		\node (b3) [below=of a3] {\normalcolor $\partial_*\big(H^{\gvf_Q}_1 ( M, \star )\big)$};
		\node (b4) [below=of a4] {\normalcolor $0$};
		
		%%Draw arrows
		\normalcolor
		\draw[->]
		(a0) edge (a1)
		(a1) edge node[auto] {} (a2)
		(a2) edge node[auto] {\normalcolor $\partial_*$} (a3)
		(a3) edge node[auto] {} (a4)
		(b0) edge (b1)
		(b1) edge (b2)
		(b2) edge node[auto] {\normalcolor $\partial_*$}  (b3)
		(b3) edge node[auto] {} (b4)
		(a1) edge node[auto] {\normalcolor $j_Q$}  (b1)
		(a2) edge node[auto] {\normalcolor $j^\star_Q  $}   (b2)
		(a3) edge node[auto] {\normalcolor } (b3);
		\end{tikzpicture}
\end{center}
It follows from the ``ker-coker'' lemma that
\begin{equation} \label{eq:2_cokers}
\dim( \coker j^\star_Q )=  \dim( \coker j_Q )+  \dim\partial_*\big(H^{\gvf_Q}_1 ( M, \star )\big) - \dim\partial_*\big(H^{\gvf_Q}_1 ( \dep M, \star )\big).
\end{equation}
According to the statement (i) in  the proof of Lemma \ref{LagrG}, $\ker j_Q$ is a Lagrangian subspace of the $Q$-vector space $H_1^{\varphi_Q}(\partial M)$ equipped with the equivariant intersection form. Since the surface $\partial M$ has genus $g$, we deduce that
\begin{eqnarray*}
\dim (\ker j_Q) \ = \  \dim \big(H^{\gvf_Q}_1 ( \dep M )\big)/2 
\notag &= &\left\{ \begin{array}{ll} g & \hbox{if $\varphi j (H_1(\partial M))=1$}  \\ g-1 & \hbox{otherwise} \end{array}\right. \\
&=&   g- \dim\partial_*\big(H^{\gvf_Q}_1 ( \dep M, \star )\big).
\end{eqnarray*}
 Hence 
\begin{eqnarray*}
 \dim (\coker j_Q  ) & = & \dim H^{\gvf_Q}_1 ( M )  -  \dim H^{\gvf_Q}_1 ( \dep M ) + \dim (\ker j_Q) \\
 &  = & \dim H^{\gvf_Q}_1 ( M )  -  \dim (\ker j_Q) \\ 
 & =&  \dim H^{\gvf_Q}_1 ( M )  - g+ \dim\partial_*\big(H^{\gvf_Q}_1 ( \dep M, \star )\big)  \\
 & =  &   \dim H^{\gvf_Q}_1 ( M,\star )  -  \dim\partial_*\big(H^{\gvf_Q}_1 (  M, \star )\big) - g+ \dim\partial_*\big(H^{\gvf_Q}_1 ( \dep M, \star )\big).
\end{eqnarray*}
By comparing this identity with \eqref{eq:2_cokers}, we deduce that 
$$ \dim H^{\gvf_Q}_1 ( M,\star ) = \dim( \coker j^\star_Q ) +g $$
which proves the equivalence between conditions  (2) and  (3).

We now prove the second part of the lemma. Conditions (2) and (3) imply the existence of some elements $w_1,\dots,w_g \in H^\partial_Q$ whose images by $j_Q^\star$ generate $H_Q$. 
After multiplication by some element of $R \setminus \{0\}$, we can assume that  $w_1,\dots,w_g \in H^\partial \subset H^\partial_Q$. 
Set $w'_1 := j^\star(w_1), \dots, w'_g := j^\star(w_g)\in H$. We can derive from any presentation of $H$ of deficiency~$g$ another presentation of the following form:
\begin{equation}   \label{eq:pre_pres_H}
H = \big\langle u_1,\dots, u_r, w'_1,\dots, w'_g\, \vert\, \delta_1,\dots, \delta_r \big\rangle.
\end{equation}
Let $x_1,\dots,x_g \in H^\partial $ and  set $x'_1 := j^\star(x_1), \dots, x'_g := j^\star(x_g)\in H$. Then, we can derive from \eqref{eq:pre_pres_H} another presentation of $H$ of the form 
\begin{equation}   \label{eq:pres_H}
H = \big\langle u_1,\dots, u_r, w'_1,\dots, w'_g,x'_1,\dots,x'_g
\, \vert\, \delta_1,\dots, \delta_r, \theta_1,\dots, \theta_{g} \big\rangle
\end{equation}
where, for each $i\in\{1,\dots, g\}$, the relation $\theta_i$ writes $x'_i$ as an $R$-linear combination of $ u_1,\dots, u_r,$ $w'_1,\dots, w'_g$. We denote by $\Gamma$ the $R$-module freely generated by the symbols $u_1,\dots, u_r,$ $w'_1\dots, w'_g,$ $x'_1,\dots,x'_g$, and we regard the relations $\delta_1,\dots, \delta_r, \theta_1,\dots, \theta_{g}$ as elements of $\Gamma$.

We now analyse the relations of the presentation \eqref{eq:pres_H}. For each $i\in \{1,\dots, r\}$,  $\delta_i\in \Gamma$ can be decomposed as $ \delta_i(u) + \delta_i(w')$, where $\delta_i(u)$ belongs to the submodule of $\Gamma$ freely generated by $u_1,\dots, u_r$ and $\delta_i(w')$ belongs to the submodule of $\Gamma$ freely generated by $w'_1,\dots,w'_g$. 
 Besides, for each $j\in \{1,\dots,r\}$, we can find a $z_j\in R \setminus \{0\}$ such that  $z_j u_j\in H$ is an $R$-linear combination of $w'_1, \dots, w'_g \in H$: consider the product $z:= z_1 \cdots z_r \in  R \setminus \{0\}$. 
 Using \eqref{eq:pre_pres_H}, we deduce that $z u_j \in \Gamma$ can be written as an $R$-linear combination of $w'_1,\dots, w'_g$  and $\delta_1,\dots, \delta_r$. Therefore, for each $i\in \{1,\dots,g\}$, there exist an $R$-linear combination $D_i\in \Gamma$ of $\delta_1,\dots, \delta_r$ 
 and an $R$-linear combination $W'_i \in \Gamma$ of $w'_1,\dots, w'_g$ such that
\begin{equation} \label{eq:syzygy}
z \theta_i + D_i = z x'_i - W'_i   \in \Gamma.
\end{equation}
We now compute $z^g \cA_M^\varphi\big( x'_1 \wedge \cdots \wedge x'_g\big)$ taking \eqref{eq:pres_H} as a presentation of $H$. In what follows, for any family of elements of $\Gamma$ such as $(\delta_1,\dots,\delta_r)$, we denote by $\delta \in \Lambda^r\Gamma$ the multivector $\delta_1 \wedge \cdots \wedge \delta_r$:
\begin{eqnarray*}
\Gamma^{2g+r}  \ni z^{g}\, \delta \wedge \theta \wedge x' 
&=& \delta \wedge (z\theta_1) \wedge \cdots \wedge (z\theta_g)  \wedge x' \\
&=& \delta \wedge (z\theta_1+D_1) \wedge \cdots \wedge (z\theta_g+D_g)  \wedge x' \\
&\by{eq:syzygy}& \delta \wedge (zx'_1-W'_1) \wedge \cdots \wedge (zx'_g-W'_g)  \wedge x' \\
&=&  (-1)^g\, \delta \wedge W'_1 \wedge \cdots \wedge W'_g  \wedge x' \\
&=& (-1)^g\, \delta(u) \wedge W'_1 \wedge \cdots \wedge W'_g  \wedge x' \\
&=& (-1)^g \ord(H/j^\star(W))\, u\wedge W'_1 \wedge \cdots \wedge W'_g  \wedge x'
\end{eqnarray*}
where $W$ is  the  $R$-submodule  of $H^\partial$ generated by $w_1,\dots,w_g$. Since $\cA_M^\varphi$ is only defined up to multiplication by an element of $\pm G$, we obtain that 
$$ z^g \cA_M^\varphi\big( j^\star(x_1) \wedge \cdots \wedge j^\star(x_g)\big) 
= \ord(H/j^\star(W))\, \det\left(\!\! \begin{array}{c}  \hbox{ \small matrix of $(W'_1,\dots,W'_g)$ in} \\ \hbox{ \small the basis $(w'_1,\dots,w'_g)$} \end{array}\!\right). $$
We deduce that
\begin{eqnarray*}
\cA_M^\varphi\big( j^\star(x_1) \wedge \cdots \wedge j^\star(x_g)\big)  &=&
	\ord(H/j^\star(W))\, \det\left( \!\!\begin{array}{c}  \hbox{ \small matrix of $(W'_1/z,\dots,W'_g/z)$ in } \\ \hbox{ \small the basis $\big(j^\star_Q(w_1),\dots,j^\star_Q(w_g)\big)$} \end{array}\!\right) \\
 &\by{eq:syzygy} & 
	\ord(H/j^\star(W))\, \det\left( \!\! \begin{array}{c}  \hbox{ \small matrix of $\big(j^\star_Q(x_1),\dots,j^\star_Q(x_g)\big)$ in} \\ 
	\hbox{ \small the basis $\big(j^\star_Q(w_1),\dots,j^\star_Q(w_g)\big)$} \end{array}\! \right).
\end{eqnarray*}
This proves \eqref{eq:compute_Alex} in the case where $y_1,\dots,y_g$ belong to $j^\star(H^\partial)$. The general case easily follows using the fact, for any element $h\in H$, there is a $z\in R\setminus\{0\}$ such that $zh \in j^\star(W)$.
\end{proof}

% ***************************************************************************
% ***************************************************************************

\subsection{Review of the Alexander functor} \label{subsec:review_AF}

Let $\grMod_{,\pm G}$ be the category whose objects are $\ZZ$-graded $R$-modules and whose morphisms are graded $R$-linear maps of arbitrary degree, up to multiplication by an element of $\pm G$. The \emph{Alexander functor} $ \Alex := \Alex_{R,G}~ \colon \Cob_G \to \grMod_{,\pm G} $ associates to any object $(g,\gvf)$ of $\Cob_G$ the graded $R$-module 
$$ \Alex(g,\gvf) := \Lambda\, H_1^\gvf (F_g,\star) = \bigoplus_{i=0}^{2g}  \Lambda^i H_1^\gvf (F_g,\star),$$
and  to any morphism $(M,\varphi) \in \Cob_G\big(({g_-},\varphi_-),({g_+},\varphi_+)\big)$ an $R$-linear map 
$$ \Alex(M,\varphi)~ \colon \Lambda\,  H_1^{\varphi_-}(F_{g_-},\star) \longrightarrow \Lambda\,  H_1^{\varphi_+}(F_{g_+},\star) $$
of degree $\gd \! g := g_+-g_-$, which is defined as follows.

Denote $H_\pm:=H_1^{\varphi_\pm}(F_{g_\pm},\star)$. 
 For any integer $j\geq 0$, the image $\Alex(M,\varphi)(x)\in \Lambda^{j+ \delta\!g}H_+ $ of any  $x \in \Lambda^j H_-$ is defined by requiring that
$$ \forall y \in \Lambda^{g - j} H_+, \ \cA_M^\varphi \left( \Lambda^j m_-(x) \wedge \Lambda^{g-j} m_+(y) \right) =  \vol  \big( \Alex(M,\varphi)(x) \wedge y\big) $$
where $g:=g_+ + g_-$ and $\vol~ \colon \Lambda^{2g_+} H_+  \to R$ is an arbitrary $R$-linear isomorphism. Note that, due to the choices of $\vol$ and of a presentation of $H$ to define the Alexander function $ \cA_M^\varphi$, the map  $\Alex(M,\varphi)$   is only defined up to multiplication by an element of $\pm G$. 

% ***************************************************************************
% ***************************************************************************

\subsection{Computation of the Alexander functor}  \label{subsec:compute_Alex}

Let $(M,\varphi) \in \Cob_G((g_-,\varphi_-),(g_+,\varphi_+))$ and keep the notations of Section \ref{subsec:review_AF}. Recall that the Magnus functor assigns to the morphism $(M,\varphi)$ the Lagrangian submodule
$$ \Mag(M,\varphi) = \cl(\ker \mathfrak{M}) \subset H_-\oplus H_+ $$
where $\mathfrak{M} := (-m_-)\oplus m_+~ \colon H_- \oplus H_+ \to H$ is induced by the parametrizations $m_\pm~ \colon F_{g_\pm} \to M$ of the top/bottom boundary of $M$. We also  set $P:= (H_- \oplus H_+) /\!\cl(\ker \mathfrak{M})$ and denote by $p~ \colon H_- \oplus H_+ \to P$ the canonical projection. Since ${Q \otimes_R \cl(\ker \mathfrak{M})}$ is a Lagrangian subspace of $H_{-,Q} \oplus H_{+,Q}$, the $Q$-vector space $P_Q $ has dimension $g=g_-+g_+$. A \emph{transversal} of $(M,\varphi)$ is  a  free submodule $W \subset H_-\oplus H_+$ of rank $g$ such that $p(W)\subset P$ generates $P_Q$. Clearly, there always exists such a $W$ and the $R$-module $p(W)$ is free of rank~$g$.

Given a transversal $W$ of $(M,\varphi)$, we can do the following construction. Choose an $R$-linear isomorphism $\vol~ \colon \Lambda^{2g_+} H_+ \to R$, which induces a  $Q$-linear isomorphism $\vol_Q~ \colon \Lambda^{2g_+} H_{+,Q} \to Q$. Consider the $Q$-linear  map
$$ \Mag_{W} (M,\varphi)~ \colon  \Lambda H_{-,Q} \longrightarrow \Lambda H_{+,Q} $$
of degree $\delta\! g= g_+-g_-$ that assigns to any $x := x_1\wedge \cdots \wedge x_j\in \Lambda^j H_{-,Q}$ the multivector $\Mag_W (M,\varphi)(x) \in \Lambda^{j+ \delta\! g} H_{+,Q}$ defined by the identity
$$ 
\vol_Q \big( \Mag_W (M,\varphi)(x)  \wedge y \big)  = \det\left(\begin{array}{c} \hbox{\small matrix of $\big(p_Q(x_1),\dots, p_Q(x_j), p_Q(y_1),\dots, p_Q(y_{g-j})\big)$ in} \\
\hbox{\small a $Q$-basis of $P_Q$ arising from an arbitrary $R$-basis of $p(W)$}  \end{array}\right)
 $$
for any  $ y_1, \cdots,y_{g-j} \in H_{+,Q}$. Observe the following:
\begin{itemize}
\item[(i)] $\Mag_W(M,\varphi)$ is well-defined up to a multiplication by an element of $\pm G$;
\item[(ii)] if $W'$ is another transversal of $(M,\varphi)$, then $\Mag_{W'}(M,\varphi) = d_{W',W}\, \Mag_{W}(M,\varphi)$ 
where $d_{W',W}$ $\in Q\setminus\{0\}$ is the determinant of a matrix of change of $Q$-bases of $P_Q$: from an arbitrary $R$-basis of $p(W)$
to an arbitrary $R$-basis of $p(W')$.
\end{itemize}
The next lemma shows that the operator $\Mag_W(M,\varphi)$ carries the same information as the submodule $\Mag(M,\varphi)$.

\begin{lemma} \label{lem:W,W'}
Let $(M,\varphi), (M',\varphi') \in \Cob_G((g_-,\varphi_-),(g_+,\varphi_+))$. We have $\Mag(M,\varphi)=\Mag(M',\varphi')$ if, and only if, there exist transversals $W,W'\subset H_-\oplus H_+$ of $(M,\varphi),(M',\varphi')$, respectively, such that $\Mag_W(M,\varphi)= \Mag_{W'}(M',\varphi')$.
\end{lemma}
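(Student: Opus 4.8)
The plan is to reconstruct the Lagrangian submodule $\Mag(M,\varphi)$ from the operator $\Mag_W(M,\varphi)$ by an explicit multilinear-algebra formula, and then to observe that this formula is insensitive to the ambiguities in $\Mag_W(M,\varphi)$. First I would reduce to a rational statement. Since $\Mag(M,\varphi)=\cl(\ker\mathfrak{M})$ and $\Mag(M',\varphi')=\cl(\ker\mathfrak{M}')$ are \emph{closed} submodules of the common torsion-free module $H_-\oplus H_+$ (with $\mathfrak{M}:=(-m_-)\oplus m_+$ and $\mathfrak{M}':=(-m'_-)\oplus m'_+$), each of them is the intersection of its rationalization with $H_-\oplus H_+$; writing $L:=\ker(p_Q)$ and $L':=\ker(p'_Q)$ for the rationalized projections $p_Q\colon H_{-,Q}\oplus H_{+,Q}\to P_Q$ and $p'_Q\colon H_{-,Q}\oplus H_{+,Q}\to P'_Q$, it therefore suffices to prove that $L=L'$ is equivalent to the existence of transversals $W,W'$ with $\Mag_W(M,\varphi)=\Mag_{W'}(M',\varphi')$ up to multiplication by an element of $\pm G$. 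The implication from $\Mag(M,\varphi)=\Mag(M',\varphi')$ to the existence of such $W,W'$ is immediate: then $P_Q=P'_Q$ and $p_Q=p'_Q$, so any transversal $W$ of $(M,\varphi)$ is also a transversal of $(M',\varphi')$, and taking $W'=W$ together with the same isomorphism $\vol\colon\Lambda^{2g_+}H_+\to R$ makes the defining identities for $\Mag_W(M,\varphi)$ and $\Mag_W(M',\varphi')$ literally coincide.

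For the converse, the key step is a reconstruction formula: for $v=v_-+v_+\in H_{-,Q}\oplus H_{+,Q}$ I claim that $v\in L$ if and only if, for every $a\geq 0$ and every $\mathbf{x}\in\Lambda^{a}H_{-,Q}$,
\[
\Mag_W(M,\varphi)(v_-\wedge\mathbf{x})=(-1)^{a+1}\,\Mag_W(M,\varphi)(\mathbf{x})\wedge v_+ .
\]
I would establish this as follows. One has $v\in L$ iff $p_Q(v)=0$ iff $p_Q(v)\wedge z=0$ for all $z\in\Lambda^{g-1}P_Q$, by nondegeneracy of the wedge pairing $P_Q\times\Lambda^{g-1}P_Q\to\Lambda^{g}P_Q\cong Q$. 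Since $p_Q$ is surjective, $P_Q=p_Q(H_{-,Q})+p_Q(H_{+,Q})$, so choosing a basis of $P_Q$ made up of vectors each lying in $p_Q(H_{-,Q})$ or in $p_Q(H_{+,Q})$ shows that $\Lambda^{g-1}P_Q$ is spanned by the elements $\Lambda^{a}p_Q(\mathbf{x})\wedge\Lambda^{b}p_Q(\mathbf{y})$ with $a+b=g-1$, $\mathbf{x}\in\Lambda^{a}H_{-,Q}$, $\mathbf{y}\in\Lambda^{b}H_{+,Q}$. Expanding $p_Q(v)\wedge\Lambda^{a}p_Q(\mathbf{x})\wedge\Lambda^{b}p_Q(\mathbf{y})$ according to $v=v_-+v_+$, its two summands are, under the identification $\Lambda^{g}P_Q\cong Q$ determined by an $R$-basis of $p(W)$, exactly $\vol_Q\big(\Mag_W(M,\varphi)(v_-\wedge\mathbf{x})\wedge\mathbf{y}\big)$ and $(-1)^{a}\,\vol_Q\big(\Mag_W(M,\varphi)(\mathbf{x})\wedge v_+\wedge\mathbf{y}\big)$ — this is precisely the defining property of $\Mag_W$ — and the two operator values $\Mag_W(M,\varphi)(v_-\wedge\mathbf{x})$ and $\Mag_W(M,\varphi)(\mathbf{x})\wedge v_+$ live in the same space $\Lambda^{a+1+\delta\! g}H_{+,Q}$. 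Letting $\mathbf{y}$ range over $\Lambda^{b}H_{+,Q}$, where $(a+1+\delta\! g)+b=2g_+$, and using nondegeneracy of the wedge pairing $\Lambda^{c}H_{+,Q}\times\Lambda^{2g_+-c}H_{+,Q}\to\Lambda^{2g_+}H_{+,Q}$, one strips off $\mathbf{y}$ and obtains the displayed identity; the conditions for $a\notin\{0,\dots,g-1\}$ are vacuous (the relevant exterior powers vanish) and may be harmlessly included.

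Granting the reconstruction formula, the converse is immediate. Its two sides are $R$-linear, hence $Q$-linear, in the operator $\Mag_W(M,\varphi)$, so the subset of $H_{-,Q}\oplus H_{+,Q}$ that it cuts out is unchanged when $\Mag_W(M,\varphi)$ is multiplied by a nonzero element of $Q$. Combined with observations (i) and (ii) above, this shows that $L$ is intrinsically determined by the class of $\Mag_W(M,\varphi)$ modulo $Q^{\times}$, and likewise $L'$ by that of $\Mag_{W'}(M',\varphi')$. Hence the hypothesis $\Mag_W(M,\varphi)=\Mag_{W'}(M',\varphi')$ up to $\pm G$ forces $L=L'$, and intersecting with $H_-\oplus H_+$ gives $\Mag(M,\varphi)=\Mag(M',\varphi')$. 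I expect the only genuine work to be the verification of the reconstruction formula — concretely, the degree and sign bookkeeping in the exterior algebra together with the two nondegeneracy statements for wedge pairings; everything else is formal.
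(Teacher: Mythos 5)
Your proof is correct and follows the same strategy as the paper's: the forward direction is identical, and your ``reconstruction formula'' for $\ker p_Q$ is an unwinding of the paper's characterization $\ker p_Q = \{z : f(z\wedge\cdot)=0\}$, where $f\colon\Lambda^g(H_{-,Q}\oplus H_{+,Q})\to Q$ is assembled from $\Mag_W(M,\varphi)$ via the decomposition $\Lambda^g(H_{-,Q}\oplus H_{+,Q})\simeq\bigoplus_j\Lambda^j H_{-,Q}\otimes_Q\Lambda^{g-j}H_{+,Q}$. Both arguments rest on the observation that the operator determines, up to a nonzero scalar, the determinant functional on $\Lambda^g(H_{-,Q}\oplus H_{+,Q})$, which in turn determines the rationalized Lagrangian subspace and hence, after intersecting with $H_-\oplus H_+$, the submodule $\Mag(M,\varphi)$.
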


\begin{proof}
Assume that $\Mag(M,\varphi)=\Mag(M',\varphi)$. Let $W$ be a transversal of $(M,\varphi)$: then $W$ is also a transversal of $(M',\varphi')$ 
and we clearly have $\Mag_W(M,\varphi)=\Mag_W(M',\varphi)$.

Conversely, assume that $\Mag_W(M,\varphi)= \Mag_{W'}(M',\varphi')$ for some transversals $W,W'$. The canonical isomorphism 
$$ \bigoplus_{j=0}^g  \Lambda^j H_{-,Q} \otimes_Q \Lambda^{g-j} H_{+,Q} \stackrel{\simeq}{\longrightarrow} \Lambda^g (H_{-,Q} \oplus H_{+,Q}) $$
can be used to assemble the $Q$-linear maps
$$ \Lambda^j H_{-,Q} \otimes_Q \Lambda^{g-j} H_{+,Q} \longrightarrow Q, \ x \otimes y \longmapsto \vol_Q \big( \Mag_W (M,\varphi)(x)  \wedge y \big) $$
defined for all $j\in \{0,\dots,g\}$, into a single $Q$-linear map $f~ \colon \Lambda^g (H_{-,Q} \oplus H_{+,Q}) \to Q$. This map, which is defined up to multiplication by an element of $\pm G$, satisfies
$$ f(z_1 \wedge \dots \wedge z_g)= 
 \det\left(\begin{array}{c} \hbox{\small matrix of $\big(p_Q(z_1),\dots, p_Q(z_{g})\big)$ in  a $Q$-basis of $P_Q$} \\
\hbox{ \small   arising from an arbitrary $R$-basis of $p(W)$}  \end{array}\right) $$
for any $z_1, \dots, z_g \in H_{-,Q} \oplus H_{+,Q}$. It follows that
$$ \ker p_Q = \big\{z\in H_{-,Q} \oplus H_{+,Q}: f(z\wedge \cdot )=0 \in \Hom_Q\big(\Lambda^{g-1}( H_{-,Q} \oplus H_{+,Q}), Q\big)\big\}. $$
Similarly, we have
$$ \ker p'_Q = \big\{z\in H_{-,Q} \oplus H_{+,Q}: f'(z\wedge \cdot)=0 \in \Hom_Q\big(\Lambda^{g-1}( H_{-,Q} \oplus H_{+,Q}), Q\big)\big\} $$
where $p'~ \colon H_- \oplus H_+ \to P':= (H_- \oplus H_+ )/ \Mag(M',\varphi')$ is the canonical projection and $f'$ is defined from $\Mag_{W'}(M', \varphi')$ 
in the same way as  $f$ is  defined from $\Mag_W(M,\varphi)$. Thus the assumption $\Mag_W(M,\varphi)= \Mag_{W'}(M',\varphi')$ implies that $f=f'$, and we conclude that
$$ \Mag(M,\varphi) = (\ker p_Q) \cap (H_-\oplus H_+) =  (\ker p'_Q) \cap (H_-\oplus H_+) = \Mag(M',\varphi'). $$

\up
\end{proof}

We now give the main result of this section, showing that the Alexander functor can be factorized into two parts. One part is scalar and is a kind of relative Alexander polynomial, while the other part is the operator version of the Magnus functor and, so, is invariant under homology concordance (Proposition~\ref{prop:h_cobordism_rel}). 

\begin{theorem} \label{th:Alex_Magnus}
Let $(M,\varphi) \in \Cob_G((g_-,\varphi_-),(g_+,\varphi_+))$ with transversal $W$. Then
$$
\Alex(M,\varphi) = \ord\big(H / \mathfrak{M} (W)\big) \cdot \Mag_W(M,\varphi) \big\vert_{\Lambda H_-}
$$
where $H= H_1^\varphi(M,\star),   H_\pm = H_1^{\varphi_\pm}(F_{g_\pm},\star)$ and $\mathfrak{M} = (-m_-)\oplus m_+~ \colon H_- \oplus H_+ \to H$.
\end{theorem}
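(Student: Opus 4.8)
The plan is to test the claimed equality of graded maps $\Lambda H_-\to\Lambda H_+$ against the duality pairing on $\Lambda H_+$. Set $g:=g_-+g_+$ and $\delta\! g:=g_+-g_-$, and recall that $\vol\colon\Lambda^{2g_+}H_+\to R$ induces, for each $k$, a perfect pairing $\Lambda^kH_+\times\Lambda^{2g_+-k}H_+\to R$, $(u,v)\mapsto\vol(u\wedge v)$, as well as its $Q$--linear extension. Hence, fixing $j$ and decomposable elements $x=x_1\wedge\cdots\wedge x_j\in\Lambda^jH_-$ and $y=y_1\wedge\cdots\wedge y_{g-j}\in\Lambda^{g-j}H_+$, it suffices to prove the scalar identity
$$
\vol\big(\Alex(M,\varphi)(x)\wedge y\big)=\pm\,\ord\big(H/\mathfrak{M}(W)\big)\cdot\vol_Q\big(\Mag_W(M,\varphi)(x)\wedge y\big),
$$
the sign being irrelevant since everything is defined only up to $\pm G$; this identity simultaneously forces $\ord(H/\mathfrak{M}(W))\cdot\Mag_W(M,\varphi)(x)$ to lie in $\Lambda^{j+\delta\! g}H_+$ and to equal $\Alex(M,\varphi)(x)$. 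By the definition of the Alexander functor the left-hand side equals $\cA_M^\varphi\big(\Lambda^jm_-(x)\wedge\Lambda^{g-j}m_+(y)\big)$; since $m_-=-\,\mathfrak{M}(\cdot\,,0)$ and $m_+=\mathfrak{M}(0,\cdot\,)$ once $H_\pm$ are identified with the summands of $H_-\oplus H_+$, this is $(-1)^j\cA_M^\varphi\big(\mathfrak{M}(\xi_1)\wedge\cdots\wedge\mathfrak{M}(\xi_g)\big)$, where $\xi_i:=(x_i,0)$ for $i\le j$ and $\xi_{j+k}:=(0,y_k)$ for $1\le k\le g-j$.

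The next step is to evaluate $\cA_M^\varphi$ by Lemma~\ref{lem:Alexander_function}. By functoriality of induced maps, $\mathfrak{M}=j^\star\circ\iota_\partial$, where $\iota_\partial=(-\iota_-)\oplus\iota_+\colon H_-\oplus H_+\to H^\partial:=H_1^\varphi(\partial M,\star)$ comes from the inclusions $F_{g_\pm}\hookrightarrow\partial M$ and $j^\star\colon H^\partial\to H$ is the inclusion-induced map of that lemma. Choose an $R$--basis $w_1,\dots,w_g$ of the free module $W$. Using that $W$ is a transversal, one checks that $\ker\mathfrak{M}_Q=\ker p_Q$ meets $W_Q$ trivially, so that both $p|_W\colon W\to p(W)$ and $\mathfrak{M}_Q|_{W_Q}$ are injective; consequently $\iota_\partial(w_1),\dots,\iota_\partial(w_g)$ are $R$--linearly independent in $H^\partial$, spanning a free submodule $\widetilde{W}$ with $j^\star(\widetilde{W})=\mathfrak{M}(W)$, while $p(w_1),\dots,p(w_g)$ is an $R$--basis of $p(W)$. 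I then split according to Lemma~\ref{lem:Alexander_function}. If $\cA_M^\varphi=0$, then $\dim_Q H_Q>g$ whereas $\mathfrak{M}(W)$ has rank $g$, so $H/\mathfrak{M}(W)$ has positive rank and $\ord(H/\mathfrak{M}(W))=0$; on the other hand all defining pairings of $\Alex(M,\varphi)$ vanish, so $\Alex(M,\varphi)=0$ and the formula holds. If $\cA_M^\varphi\neq0$, then $\dim_Q H_Q=g$ and $j^\star_Q$ is onto, hence $\mathfrak{M}_Q(w_1),\dots,\mathfrak{M}_Q(w_g)=j^\star_Q\iota_\partial(w_1),\dots,j^\star_Q\iota_\partial(w_g)$ form a $Q$--basis of $H_Q$, and \eqref{eq:compute_Alex} applied with this basis and with the submodule $\widetilde{W}$ gives
$$
\cA_M^\varphi\big(\mathfrak{M}(\xi_1)\wedge\cdots\wedge\mathfrak{M}(\xi_g)\big)=\ord\big(H/\mathfrak{M}(W)\big)\cdot\det\Big(\text{matrix of }\big(\mathfrak{M}_Q(\xi_1),\dots,\mathfrak{M}_Q(\xi_g)\big)\text{ in }\big(\mathfrak{M}_Q(w_1),\dots,\mathfrak{M}_Q(w_g)\big)\Big).
$$

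Finally I would match this determinant with the one defining $\Mag_W$. Since $\cA_M^\varphi\neq0$ gives $\im\mathfrak{M}_Q=H_Q$, the map $\mathfrak{M}_Q$ factors as an isomorphism $P_Q\xrightarrow{\ \sim\ }H_Q$ composed with $p_Q$, so the coordinate vector of $\mathfrak{M}_Q(\xi_i)$ in the basis $(\mathfrak{M}_Q(w_k))_k$ of $H_Q$ equals the coordinate vector of $p_Q(\xi_i)$ in the basis $(p_Q(w_k))_k$ of $P_Q$; the latter is exactly a $Q$--basis of $P_Q$ arising from an $R$--basis of $p(W)$, so by the very definition of $\Mag_W(M,\varphi)$ the displayed determinant is $\vol_Q\big(\Mag_W(M,\varphi)(x)\wedge y\big)$. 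Combining the three displayed identities yields the scalar identity above for all decomposable $x,y$, hence the theorem modulo $\pm G$. The part requiring care — and the main obstacle — is precisely this bookkeeping around $H^\partial$ and $P$: the transversal $W$ lives in $H_-\oplus H_+$ while Lemma~\ref{lem:Alexander_function} is phrased through $j^\star\colon H^\partial\to H$, so one must transport $W$ to $\widetilde{W}$ and check the compatibility $j^\star(\widetilde{W})=\mathfrak{M}(W)$, and one must verify that the two determinants (one in a basis of $H_Q$, the other in a basis of $P_Q$) genuinely coincide, which hinges on $\mathfrak{M}_Q$ inducing an isomorphism $P_Q\cong H_Q$ in the non-vanishing case; everything else is routine multilinear algebra together with the already-established properties of transversals and of the Alexander function.
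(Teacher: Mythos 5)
Your proposal is correct and follows the same strategy as the paper's proof: it splits on whether $\mathfrak{M}_Q$ is surjective and, in each case, invokes Lemma~\ref{lem:Alexander_function}. The paper dispatches the surjective case with a single sentence (``the result follows from the second part of Lemma~\ref{lem:Alexander_function}''), whereas you supply the implicit bookkeeping --- factoring $\mathfrak{M}=j^\star\circ\iota_\partial$, transporting $W$ to a free submodule $\widetilde{W}\subset H^\partial$ with $j^\star(\widetilde{W})=\mathfrak{M}(W)$, and using the isomorphism $P_Q\cong H_Q$ induced by $\mathfrak{M}_Q$ to match the determinant of Lemma~\ref{lem:Alexander_function} with the one defining $\Mag_W(M,\varphi)$ --- all of which is exactly the reasoning the paper leaves to the reader.
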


\begin{proof}
Two cases have to be distinguished.
If $\mathfrak{M}_Q~ \colon H_{-,Q} \oplus H_{+,Q} \to H_Q$ is not surjective, then the map $j^\star_Q~ \colon Q \otimes_R H_1^\varphi(\partial M,\star) \to H_Q$ induced by the inclusion is not surjective; it follows from the first part of Lemma \ref{lem:Alexander_function} that $\cA_M^\varphi=0$, which implies that $\Alex(M,\varphi)=0$, and it also follows that $\dim H_Q >g$; but $\dim (Q\otimes_R \mathfrak{M}(W)) \leq \dim W_Q=g$, and we deduce that  $\ord\big(H/\mathfrak{M} (W)\big)=0$. If $\mathfrak{M}_Q~ \colon H_{-,Q} \oplus H_{+,Q} \to H_Q$ is  surjective, then the result follows from the second part of Lemma~\ref{lem:Alexander_function}.
\end{proof}

\bgm 
\begin{remark} \label{rem:hc}
 Consider  the monoid of homology cobordisms $\HCob^\varphi(F_g)$   introduced in Section~\ref{subsec:homology_cobordisms}. \egm
Here $g \geq 1$   %	is an integer  
and  $\varphi~ \colon H_1(F_g) \to G$ is a group homomorphism. It follows from Lemma~\ref{lem:KLW} that  $W_\pm:=H_\pm$  is a transversal of $(M,\varphi)$ for any $M \in \HCob^\varphi(F_{g})$. On the one hand, we have
$$ \ord\big(H/ \mathfrak{M} (W_\pm)\big) =  \ord\big(H/m_\pm(H_\pm )\big)  = \ord\, H_1^\varphi(M,\partial_\pm M) \in (R\setminus\{0\})/\!\pm\! G $$
which we denote by $\Delta(M,\partial_\pm M)$ and view as a relative version of the  Alexander polynomial. On the other hand, it follows  from the definitions that
$$
\Mag_{W_+} (M,\varphi) = \Lambda\big(r^\varphi(M)\big) \quad \hbox{and} \quad \Mag_{W_-} (M,\varphi) = \det(r^\varphi(M)\big)^{-1} \cdot \Lambda\big(r^\varphi(M)\big) 
$$ 
where $r^\varphi(M) \in \Aut_Q(H_Q)$ is the Magnus representation of $M$. By Theorem \ref{th:Alex_Magnus}, we get
$$ 
\Alex(M,\varphi) = \Delta(M,\partial_+M) \cdot \Lambda\big(r^\varphi(M)\big) = \Delta(M,\partial_-M) \det(r^\varphi(M)\big)^{-1} \cdot \Lambda\big(r^\varphi(M)\big)  
$$
which is already proved in \cite[Proposition 7.3 \bgm \& \egm  Example 9.6]{FlMa14} using the theory of Reidemeister torsions. This example shows that the ``non-Magnus'' part of $\Alex(M,\varphi)$ in the factorization formula of Theorem \ref{th:Alex_Magnus} is \emph{not} invariant under homology concordance. (See \cite[Remark~7.5]{FlMa14}.) 
\end{remark}

\egm

% ***************************************************************************
% ***************************************************************************

\subsection{The free case}

We keep the notations of Section \ref{subsec:compute_Alex}, which we specialize to the case where the $R$-module $P=(H_- \oplus H_+) /\!\cl(\ker \mathfrak{M})$ is \emph{free}. (For instance, this condition is certainly satisfied if $G=\{1\}$ and $R=\ZZ$.) Then a natural choice of transversal for $(M,\varphi)$ is  the image of an arbitrary section $s~ \colon P \to H_- \oplus H_+$ of the projection $p$. Since $p(s(P))=P$, the map $\Mag_{s(P)} (M,\varphi) $ does not depend on the choice of $s$, and it can  be described as follows. Because  $\Mag(M,\varphi)=\cl(\ker \mathfrak{M})$ is then a  direct summand of the free $R$-module $H_- \oplus H_+$, it is a finitely-generated  projective module so that, by a classical result of Quillen and Suslin, it is free. Thus, by  Pl\"ucker embedding, we can consider instead the multivector
$$ \Pl( \Mag(M,\varphi)) := e_1 \wedge \dots \wedge e_g \in \gL^g (H_- \oplus H_+) /\! \pm G $$
where $(e_1, \ldots, e_g)$ is an arbitrary basis of $\Mag(M,\varphi)$. Through the  sequence of  isomorphisms 
\begin{eqnarray*}
\Lambda^g (H_- \oplus H_+) \ \simeq \  \bigoplus_{k=0}^g\,  \Lambda^k H_- \otimes \Lambda^{g-k} H_+ 
&\simeq &  \bigoplus_{k=0}^g\,  \Hom\big( \Lambda^{2g_--k} H_-,R\big) \otimes  \Lambda^{g-k} H_+  \\
& \simeq & \bigoplus_{k=0}^g \,  \Hom\big( \Lambda^{2g_--k} H_-, \Lambda^{g-k}H_+ \big)
\end{eqnarray*}
(whose second one  is determined by  the choice of an isomorphism $\Lambda^{2g_-} H_- \simeq R$), 
the multivector $\Pl(\Mag(M,\varphi))$ can be viewed as an $R$-linear map $\Pl (\Mag(M,\varphi))~ \colon \Lambda H_{-} \to \Lambda H_{+}$ which is well-defined up to multiplication by an element of $\pm G$. It can be verified that (for an appropriate choice of signs in the above sequence of isomorphisms)
$$
\Mag_{s(P)}(M,\varphi) =  Q \otimes_R  \Pl\big(\Mag(M,\varphi)\big)
$$
and we deduce the following from Theorem \ref{th:Alex_Magnus}.

\begin{corollary} \label{coro:Alex_Magnus}
Let $(M,\varphi) \in \Cob_G((g_-,\varphi_-),(g_+,\varphi_+))$ \bgm be \egm such that $P=(H_- \oplus H_+) /\!\cl(\ker \mathfrak{M})$ is free,
where $H= H_1^\varphi(M,\star),   H_\pm = H_1^{\varphi_\pm}(F_{g_\pm},\star)$ and $\mathfrak{M} = (-m_-)\oplus m_+~ \colon H_- \oplus H_+ \to H$. Then
$$
\Alex(M,\varphi) = \ord\big(H/\mathfrak{M} (W)\big) \cdot \Pl\big(\Mag(M,\varphi)\big)
$$
where $W \subset H_-\oplus H_+$ is an arbitrary free  submodule of rank $g$ which projects onto $P$.
\end{corollary}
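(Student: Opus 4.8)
The plan is to deduce the corollary from Theorem~\ref{th:Alex_Magnus}, the only extra input being the identification, valid in the present ``free'' situation, of $\Mag_W(M,\varphi)\big|_{\Lambda H_-}$ with $\Pl\big(\Mag(M,\varphi)\big)$ under the sequence of isomorphisms set up in Section~\ref{subsec:compute_Alex}. First I would reduce to the case of a section: since $P=(H_-\oplus H_+)/\cl(\ker\mathfrak{M})$ is free, the sequence $0\to\cl(\ker\mathfrak{M})\to H_-\oplus H_+\xrightarrow{\,p\,}P\to 0$ splits, so $\Mag(M,\varphi)=\cl(\ker\mathfrak{M})$ is finitely generated projective, hence free by Quillen--Suslin, of rank $g=g_-+g_+$; in particular $\Pl\big(\Mag(M,\varphi)\big)$ is defined. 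If $W\subset H_-\oplus H_+$ is a free submodule of rank $g$ with $p(W)=P$, then $p|_W\colon W\to P$ is a surjection between free $R$-modules of equal finite rank, hence an isomorphism, so $W=s(P)$ for the section $s:=(p|_W)^{-1}$ of $p$. Such a $W$ has $p(W)$ generating $P_Q$, hence is a transversal of $(M,\varphi)$, and Theorem~\ref{th:Alex_Magnus} gives $\Alex(M,\varphi)=\ord\big(H/\mathfrak{M}(W)\big)\cdot\Mag_W(M,\varphi)\big|_{\Lambda H_-}$. So it remains to prove $\Mag_W(M,\varphi)\big|_{\Lambda H_-}=\Pl\big(\Mag(M,\varphi)\big)$ up to multiplication by an element of $\pm G$.

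Next I would unwind the three successive isomorphisms of Section~\ref{subsec:compute_Alex}. Write $\varepsilon\colon\Lambda^{2g}(H_-\oplus H_+)\xrightarrow{\ \sim\ }R$ for the isomorphism induced by $\vol$ together with the chosen isomorphism $\Lambda^{2g_-}H_-\simeq R$. Then the composite carries a multivector $\omega\in\Lambda^g(H_-\oplus H_+)$ to the degree-$(g_+-g_-)$ map $\Phi(\omega)\colon\Lambda H_-\to\Lambda H_+$ characterised by $\vol\big(\Phi(\omega)(x)\wedge y\big)=\varepsilon(\omega\wedge x\wedge y)$ for $x\in\Lambda^jH_-$ and $y\in\Lambda^{g-j}H_+$ (a reordering of exterior factors introduces only a global sign, which is the ``appropriate choice of signs'' mentioned just before the corollary). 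Taking $\omega=e_1\wedge\cdots\wedge e_g$ with $(e_1,\dots,e_g)$ an $R$-basis of $L:=\Mag(M,\varphi)=\ker p$, complete it to an $R$-basis $(e_1,\dots,e_g,w_1,\dots,w_g)$ of $H_-\oplus H_+$ with $(w_1,\dots,w_g)$ a basis of $W=s(P)$, so that $(p(w_1),\dots,p(w_g))$ is a basis of $P$. For $x=x_1\wedge\cdots\wedge x_j$ and $y=y_1\wedge\cdots\wedge y_{g-j}$ only the $w_1\wedge\cdots\wedge w_g$-component of $x\wedge y$ survives wedging with $\omega$; writing $\Lambda^gp(x\wedge y)=c\cdot p(w_1)\wedge\cdots\wedge p(w_g)$, the scalar $c$ is precisely the determinant of the matrix of $\big(p(x_1),\dots,p(x_j),p(y_1),\dots,p(y_{g-j})\big)$ in the basis $(p(w_i))$, i.e.\ the quantity defining $\Mag_W(M,\varphi)(x)$. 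Hence $\varepsilon(\omega\wedge x\wedge y)=c\cdot\varepsilon(e_1\wedge\cdots\wedge e_g\wedge w_1\wedge\cdots\wedge w_g)$, and since $(e_i,w_j)$ is an $R$-basis of $H_-\oplus H_+$ the factor $\varepsilon(e_1\wedge\cdots\wedge w_g)$ is a unit of $R=\ZZ[G]$, hence lies in $\pm G$ and may be discarded. This gives $\Phi\big(\Pl(\Mag(M,\varphi))\big)\big|_{\Lambda H_-}=\Mag_W(M,\varphi)\big|_{\Lambda H_-}$ up to $\pm G$, which is the desired identity, since $\Pl(\Mag(M,\varphi))$, viewed as a map, \emph{is} $\Phi(e_1\wedge\cdots\wedge e_g)$. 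Substituting into the formula from the first step yields $\Alex(M,\varphi)=\ord\big(H/\mathfrak{M}(W)\big)\cdot\Pl\big(\Mag(M,\varphi)\big)$.

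I expect the main obstacle to be purely bookkeeping: pinning down the orderings and normalisations in the three isomorphisms of Section~\ref{subsec:compute_Alex} (together with $\vol$, the isomorphism $\Lambda^{2g_-}H_-\simeq R$, and the splitting) precisely enough that the change-of-normalisation factor $\varepsilon(e_1\wedge\cdots\wedge w_g)$ is recognised as an element of $R^\times=\pm G$ and absorbed into the ambiguity. Conceptually nothing is hidden: the whole statement reduces to the elementary fact that the Plücker coordinates of the $g$-plane $L\subset H_-\oplus H_+$ relative to a complementary basis $(w_1,\dots,w_g)$ \emph{are} exactly the determinants $c$ occurring in the definition of $\Mag_W$.
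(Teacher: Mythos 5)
Your argument is correct and matches the paper's own deduction: the paper derives the corollary from Theorem~\ref{th:Alex_Magnus} after asserting (without proof, as ``it can be verified'') the identity $\Mag_{s(P)}(M,\varphi)=Q\otimes_R\Pl\big(\Mag(M,\varphi)\big)$, and your second paragraph supplies exactly the Plücker-coordinate computation hidden behind that assertion, together with the (also unproved in the paper) reduction of an arbitrary rank-$g$ free $W$ mapping onto $P$ to the image of a section. One tiny imprecision: the sign from reordering $\omega\wedge x\wedge y$ is not literally a single global sign but depends on the degree $j$ of $x$; this is harmless and is precisely what the paper's ``appropriate choice of signs in the above sequence of isomorphisms'' is meant to absorb, so the conclusion stands.
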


\begin{remark}
The use of the Pl\"ucker embedding to transform submodules into linear operators already appears in \cite{Do99} for $G:= \{1\}$ and $R:= \RR$. (See Section \ref{subsec:trivial_coef} in this connection.)
\end{remark}

\appendix

\bgm 
\section{Glossary}

A \emph{monoidal category} is a category $\mathsf{C}$ with a \emph{monoidal structure}, which consists of
\begin{itemize}
\item a bifunctor $\boxtimes: \mathsf{C} \times \mathsf{C} \to \mathsf{C}$,  called the \emph{tensor product},
\item an object $\mathcal{I}$ of $\mathsf{C}$, called the \emph{unit object},
\item  a natural isomorphism  $A_{\mathcal{X},\mathcal{Y},\mathcal{Z}}: (\mathcal{X} \boxtimes \mathcal{Y}) \boxtimes\mathcal{Z} 
\to \mathcal{X} \boxtimes(\mathcal{Y}\boxtimes\mathcal{Z}) $, called the \emph{associativity constraint},
for any objects $\mathcal{X},\mathcal{Y},\mathcal{Z}$ of $\mathsf{C}$,
\item a natural isomorphism  $L_{\mathcal{X}}:  \mathcal{I} \boxtimes  \mathcal{X}   \to \mathcal{X}$ 
(respectively, {$R_{\mathcal{X}}: \mathcal{X} \boxtimes \mathcal{I} \to \mathcal{X}$}),
called the \emph{left} (respectively, \emph{right}) \emph{unit constraint},  for any object $\mathcal{X}$ of $\mathsf{C}$, 
\end{itemize}
satisfying two kinds of coherence conditions: see \cite[\S VII.1]{ML98}. 
If all the natural isomorphisms $A_{\mathcal{X},\mathcal{Y},\mathcal{Z}}$, $L_{\mathcal{X}}$, and $R_{\mathcal{X}}$ are  identities,
the monoidal category $\mathsf{C}$ is said to be \emph{strict}.\\

Let $(\mathsf{C}, \boxtimes, \mathcal{I}, A,L,R)$ and $(\mathsf{C}', \boxtimes', \mathcal{I}', A',L',R')$ be monoidal categories. 
A \emph{monoidal functor} from $\mathsf{C}$ to $\mathsf{C'}$ is a functor $F: \mathsf{C} \to \mathsf{C}'$ coming with
\begin{itemize}
\item a natural transformation $T_{\mathcal{X}, \mathcal{Y}}:F(\mathcal{X} ) \boxtimes' F(\mathcal{Y} )  \to F(\mathcal{X}  \boxtimes \mathcal{Y} )$, 
for any objects $\mathcal{X}, \mathcal{Y}$ of $\mathsf{C}$,
\item a morphism $U: \mathcal{I}' \to F(\mathcal{I})$ in $\mathsf{C}'$,
\end{itemize}
satisfying three kinds of coherence conditions which involve the associativity/unit constraints of $\mathsf{C}$ and $\mathsf{C'}$: 
see \cite[\S XI.2]{ML98}. If  $U$ is an isomorphism (respectively, the identity) 
and if  $T_{\mathcal{X}, \mathcal{Y}}$ is a natural isomorphism (respectively, the identity),
then the monoidal functor $F$ is said to be \emph{strong} (respectively, \emph{strict}).

\egm

\textbf{Acknowledgements.} 
The authors would like to thank the referee for helpful comments and suggestions.
The third author is partially supported by the Spanish Ministry of Education (grants MTM2013-45710-C2-1-P and MTM2016-76868-C2-2-P), by the ``Grupo de Geometría del Gobierno de Aragón'' and by the European Social Fund. \\
	
% ***************************************************************************
% ***************************************************************************
% ***************************************************************************

% ====================
% --- BIBLIOGRAPHY --- 
% ====================

\bibliographystyle{amsalpha}
\addcontentsline{toc}{section}{\refname}
{\small
\bibliography{bibliography_fe}
}

% =================
% --- ADDRESSES --- 
% =================

\Addresses

\end{document}